\documentclass[11pt]{article}

\usepackage{amsmath,amsfonts,amsthm,amssymb,graphicx,tikz,tikz-cd,url,hyperref}
\usepackage[all,2cell,ps]{xy}

\bibliographystyle{plainurl}

\theoremstyle{plain}
\newtheorem{thm}{Theorem}[section]

\newtheorem{lem}[thm]{Lemma}
\newtheorem{prop}[thm]{Proposition}

\newtheorem{cor}[thm]{Corollary}

\newtheorem{proj}{Project}
\newtheorem{qtn}[proj]{Question}

\theoremstyle{definition}

\newtheorem{rem}[thm]{Remark}

\theoremstyle{remark}

\newcommand{\bbB}{\mathbb{B}}
\newcommand{\bbC}{\mathbb{C}}

\newcommand{\bbL}{\mathbb{L}}

\newcommand{\bbN}{\mathbb{N}}

\newcommand{\bbP}{\mathbb{P}}
\newcommand{\bbQ}{\mathbb{Q}}
\newcommand{\bbR}{\mathbb{R}}

\newcommand{\bbZ}{\mathbb{Z}}

\newcommand{\bfG}{\mathbf{G}}

\newcommand{\calL}{\mathcal{L}}

\newcommand{\calN}{\mathcal{N}}
\newcommand{\calO}{\mathcal{O}}

\newcommand{\calR}{\mathcal{R}}

\newcommand{\fraka}{\mathfrak{a}}

\newcommand{\frakg}{\mathfrak{g}}

\newcommand{\frakk}{\mathfrak{k}}

\newcommand{\frakn}{\mathfrak{n}}

\newcommand{\al}{\alpha}
\newcommand{\gam}{\gamma}
\newcommand{\Gam}{\Gamma}

\newcommand{\de}{\delta}
\newcommand{\Del}{\Delta}
\newcommand{\ep}{\epsilon}

\newcommand{\lam}{\lambda}
\newcommand{\Lam}{\Lambda}
\newcommand{\sig}{\sigma}
\newcommand{\Sig}{\Sigma}

\newcommand{\om}{\omega}

\DeclareMathOperator{\U}{U}

\DeclareMathOperator{\M}{M}
\DeclareMathOperator{\SL}{SL}
\DeclareMathOperator{\PSL}{PSL}
\DeclareMathOperator{\GL}{GL}
\DeclareMathOperator{\PGL}{PGL}

\DeclareMathOperator{\PU}{PU}
\DeclareMathOperator{\SU}{SU}

\DeclareMathOperator{\Id}{Id}

\DeclareMathOperator{\ad}{ad}
\DeclareMathOperator{\Ad}{Ad}

\DeclareMathOperator{\Hom}{Hom}
\DeclareMathOperator{\End}{End}

\newcommand{\bs}{\backslash}

\newcommand{\lra}{\longrightarrow}

\newcommand{\ssm}{\smallsetminus}

\newcommand{\conj}{\overline}
\newcommand{\wh}{\widehat}
\newcommand{\wt}{\widetilde}

\newenvironment{pf}{\begin{proof}}{\end{proof}}

\newenvironment{enum}{\begin{enumerate}}{\end{enumerate}}

\usepackage{tikz-cd}
\allowdisplaybreaks

\tikzcdset{scale cd/.style={every label/.append style={scale=#1},
    cells={nodes={scale=#1}}}}

\makeatletter
\let\@@pmod\pmod
\DeclareRobustCommand{\pmod}{\@ifstar\@pmods\@@pmod}
\def\@pmods#1{\mkern4mu({\operator@font mod}\mkern 6mu#1)}
\makeatother

\usetikzlibrary{arrows}

\begin{document}

\title{Residually finite lattices in $\wt{\PU(2,1)}$ and fundamental groups of smooth projective surfaces}

\author{Matthew Stover\\ \small{Temple University}\\ \small{\textsf{mstover@temple.edu}} \and Domingo Toledo\\ \small{University of Utah}\\ \small{\textsf{toledo@math.utah.edu}}}

\date{\today}

\maketitle

\begin{center}
\vspace{-1.5em}\emph{To Gopal Prasad in celebration of his 75th birthday}
\end{center}

\begin{abstract}
This paper studies residual finiteness of lattices in the universal cover of $\PU(2,1)$ and applications to the existence of smooth projective varieties with fundamental group a cocompact lattice in $\PU(2,1)$ or a finite covering of it. First, we prove that certain lattices in the universal cover of $\PU(2,1)$ are residually finite. To our knowledge, these are the first such examples. We then use residually finite central extensions of torsion-free lattices in $\PU(2,1)$ to construct smooth projective surfaces that are not birationally equivalent to a smooth compact ball quotient but whose fundamental group is a torsion-free cocompact lattice in $\PU(2,1)$.
\end{abstract}

\section{Introduction}\label{sec:Intro}

The purpose of this paper is to study residual finiteness of lattices in the universal cover of $\PU(2,1)$ and applications to the existence of smooth projective varieties with fundamental group a cocompact lattice in $\PU(2,1)$ or a finite covering of it. This follows a general theme used by the second author \cite{ToledoNotRF}, and in a different way by Nori (unpublished) and Catanese--Koll\'ar \cite{Trento}, to build smooth projective varieties with fundamental group that is not residually finite. First, we prove that certain lattices in the universal cover of $\PU(2,1)$ are residually finite. To our knowledge, these are the first such examples. We then use residually finite central extensions of torsion-free lattices in $\PU(2,1)$ to construct smooth projective surfaces that are not birationally equivalent to a smooth compact ball quotient but whose fundamental group is a torsion-free cocompact lattice in $\PU(2,1)$.

\medskip

Let $G$ be an adjoint simple Lie group of hermitian type. Then $\bbZ \le \pi_1(G)$, and the universal cover $\wt{G}$ of $G$ fits in a central extension
\[
1 \lra \bbZ \lra \wt{G} \lra \conj{G} \lra 1
\]
for a certain finite cover $\conj{G}$ of $G$. Adjoint simple groups are linear, and it follows from a well-known theorem of Malcev \cite{MalcevLinear} that any lattice $\Gam < G$ is residually finite. However, $\wt{G}$ is not linear and so it is not clear whether or not the preimage $\wt{\Gam}$ of $\Gam$ in $\wt{G}$ is residually finite. When $\Gam$ is an arithmetic subgroup of an absolutely simple, simply connected algebraic group $\bfG$ defined over a number field $K$ and $\Gam$ has the congruence subgroup property, Deligne \cite{DeligneRF} showed that the preimage $\wt{\Gam}$ of $\Gam$ in $\wt{G} = \wt{\bfG}(\bbR)$ need not be residually finite. If $z$ is a generator for $\bbZ \le \pi_1(\bfG(\bbR)) < \wt{G}$ and $n$ is the order of the \emph{absolute metaplectic kernel} $\M(\varnothing, \bfG)$, Deligne proved that $z^n$ maps trivially to every finite quotient of $\wt{\Gam}$. See work of Prasad and Rapinchuk \cite[p.\ 92]{Prasad-Rapinchuk} for a precise description of the connection between residual finiteness of $\wt{\Gam}$ and $|\M(\varnothing, \bfG)|$ and for a complete description of $\M(\varnothing, \bfG)$. One then sees that the preimage of $\Gam$ in any connected $d$-fold cover of $\bfG(\bbR)$ with $d > n$ also cannot be residually finite.

There are many known examples of lattices in $\PU(n,1)$ without the congruence subgroup property, and it is widely expected to fail for all such lattices. Consequently, the following question seems interesting in contrast with the situation in higher-rank.

\begin{qtn}\label{qtn:RF?}
Let $G = \PU(n,1)$, $\wt{G}$ be its universal cover, $\Gam < G$ be a lattice, and $\wt{\Gam}$ be the preimage of $\Gam$ in $\wt{G}$. Is $\wt{\Gam}$ residually finite? Is $\wt{\Gam}$ linear?
\end{qtn}

We briefly recall that $\wt{\Gam}$ also has a natural geometric interpretation when $\Gam$ is torsion-free and cocompact. Let $\bbB^n$ denote complex hyperbolic $n$-space. Then $\wt{\Gam}$ is the fundamental group of the complement of the zero section inside the total space of the canonical bundle of $\Gam \bs \bbB^n$. It can equivalently be considered as the fundamental group of the space
\[
\wh{\Gam} \bs \SU(n,1) / \SU(n),
\]
where $\wh{\Gam}$ is the preimage of $\Gam$ in $\SU(n,1)$ \cite[\S 8.9]{Kollar}. See \S\ref{ssec:HomogSp} for more on the connection with $\SU(n,1) / \SU(n)$.

We also note another motivation for Question~\ref{qtn:RF?} from geometric group theory. If there is a cocompact lattice $\wt{\Gam}$ in $\wt{G}$ that is not residually finite, then, as in Deligne's work \cite{DeligneRF}, one could find a lattice in a finite cover of $\PU(n,1)$ that is not residually finite. This would be a finite central extension of a cocompact lattice in $\PU(n,1)$, hence it would be a Gromov hyperbolic group that is not residually finite. Whether or not Gromov hyperbolic groups are residually finite is a long-standing open problem.

It is known in the case $n=1$ that Question~\ref{qtn:RF?} has a positive answer. In other words, lattices in the universal cover of $\PU(1,1) \cong \PSL_2(\bbR)$ are both residually finite and linear. Very briefly, the most well-known proof goes as follows. First, passing to a subgroup of finite index, we can assume that $\wt{\Gam}$ is the fundamental group of the unit tangent bundle of a hyperbolic surface. Either geometrically or using a presentation for $\wt{\Gam}$, one can produce a homomorphism from $\wt{\Gam}$ to a two-step nilpotent group (in fact, an integer Heisenberg group) for which the restriction to the center of $\wt{\Gam}$ is injective. Since $\Gam$ is residually finite, this suffices to conclude that $\wt{\Gam}$ is also residually finite. See \cite[\S IV.48]{delaHarpe} for precise details from both the geometric and algebraic perspectives, and \S\ref{sec:RF} for more on why finding such a quotient suffices to prove residual finiteness.

The first aim of this paper is to construct the first examples of lattices in $\PU(2,1)$ whose lift to the universal cover is residually finite. We in fact develop two methods for proving residual finiteness, each generalizing a variant of the classical argument for the case $n=1$. Both strategies eventually intersect in the endgame of finding a two-step nilpotent quotient of $\wt{\Gam}$ for which the central element survives to have infinite order. For torsion-free cocompact lattices, the two methods end up being effectively equivalent, but one or the other can be used more easily in different settings. We will prove:

\begin{thm}\label{thm:ExsExist}
Let $G = \PU(2,1)$. There are lattices $\Gam < G$ so that the preimage of $\Gam$ in any connected cover of $G$ is residually finite. One can take $\Gam$ to be uniform or nonuniform.
\end{thm}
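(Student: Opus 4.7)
The plan is to reduce the theorem to the existence of a two-step nilpotent quotient of $\wt\Gam$ on which the central $\bbZ$ survives with infinite order, and then to construct explicit lattices where such a quotient can be exhibited. Let $z$ generate the central $\bbZ \le \wt\Gam$ sitting over the identity of $\Gam$. First I would verify the following reduction: if there is a homomorphism $\rho \colon \wt\Gam \to N$ to a finitely generated nilpotent group $N$ with $\rho(z)$ of infinite order, then the preimage of $\Gam$ in every connected cover of $G$ is residually finite. Indeed, every such preimage is either $\wt\Gam$ itself or some $\wt\Gam / \langle z^d\rangle$, and a nontrivial element there is either central---in which case its image in $N / \langle \rho(z)^d\rangle$ is nontrivial and is separated from the identity by Malcev's theorem for finitely generated nilpotent groups---or noncentral, in which case its image in $\Gam$ is nontrivial and is separated from the identity by Malcev's theorem for linear groups.

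The second step is to establish a cohomological criterion for such a $\rho$ to exist. When $\Gam$ is torsion-free and cocompact with $X = \Gam \bs \bbB^2$, the extension $1 \to \bbZ \to \wt\Gam \to \Gam \to 1$ is classified by a class in $H^2(X,\bbZ)$ that is a nonzero rational multiple of the K\"ahler class $\om$. By Malcev theory, a two-step nilpotent quotient of $\wt\Gam$ on which $z$ has infinite order corresponds to a collection $\alpha_1, \dots, \alpha_m \in H^1(X,\bbZ)$ whose pairwise cup products span, over $\bbQ$, a subspace of $H^2(X,\bbQ)$ containing $\om$; equivalently, $\om$ must lie in the image of the Albanese pull-back $H^2(\mathrm{Alb}(X),\bbQ) \to H^2(X,\bbQ)$.

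It then remains to exhibit explicit $\Gam$, both uniform and nonuniform, satisfying this condition. In the cocompact case the plan is to work through the known list of ball quotients with positive first Betti number---for instance suitable Cartwright--Steger surfaces or arithmetic lattices in unitary groups of the second kind---and verify directly that the image of the Albanese map has complex dimension at least two. In the nonuniform case I would pass to a smooth toroidal compactification $\conj X$ with cusp divisor $D$ and replace $\om$ by $c_1(K_{\conj X} + D)$; candidates then include appropriate Picard modular surfaces and Deligne--Mostow lattices.

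The principal obstacle is verifying the cohomological criterion for specific lattices. Many well-studied ball quotients have $b_1 = 0$ so the criterion fails trivially, and even when $b_1 > 0$ the image of $\Lambda^2 H^1 \to H^2$ can be a proper subspace of $H^{1,1}$ that misses the K\"ahler class. Singling out explicit cocompact and nonuniform examples on which the criterion genuinely holds---and, in the nonuniform case, carefully accounting for the contribution of the boundary---is where the main effort of the proof should concentrate.
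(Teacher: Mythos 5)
Your first two steps are sound and essentially reproduce the paper's machinery: the reduction to a nilpotent quotient in which $z$ survives (this is Corollary~\ref{cor:RFNilQuo} plus Lemma~\ref{lem:RFmodn}, and your shortcut — quotients of finitely generated nilpotent groups are again finitely generated nilpotent, hence residually finite — is a legitimate simplification of Lemma~\ref{lem:RFmodn}), and the cohomological criterion, which is exactly Theorem~\ref{thm:CentralRF1}/Corollary~\ref{cor:RFCentralKahler} (you assert it ``by Malcev theory''; the actual proof needs the Beauville--Sullivan exact sequence and the Gysin sequence, but the statement you invoke is correct).

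The genuine gap is in the third step, which is where the whole content of this existence theorem lies: you never produce and verify a single lattice. Your proposed test — that the Albanese image have complex dimension at least two — does not imply that the K\"ahler class lies in the image of $c_\bbQ : \bigwedge\nolimits^2 H^1(X,\bbQ) \to H^2(X,\bbQ)$; that image can be a proper subspace of $H^{1,1}$ missing the K\"ahler class even when the Albanese map is generically finite, as you yourself concede in your final paragraph. Worse, your named candidate, the Cartwright--Steger surface, has $h^{1,0}=1$, so the cup product image is spanned by a class $\de$ with $\de \wedge \de = 0$ and the criterion \emph{fails} there; the paper gets the uniform case only indirectly, via the commensurable Stover surface, for which Dzambic--Roulleau prove that the cup product image contains all of $H^{1,1}$ and hence the canonical class. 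For the nonuniform case your plan to replace $\om$ by $c_1(K_{\conj{X}}+D)$ on a toroidal compactification is unjustified: Theorem~\ref{thm:CentralRF1} is proved for closed aspherical manifolds, $\conj{X}$ is neither aspherical nor has fundamental group $\Gam$, and no argument is offered to transfer the criterion. The paper sidesteps this entirely: it lifts an explicit presentation of the Deligne--Mostow lattice with weights $(5,4,1,1,1)/6$ to the universal cover via the winding-number method of \S\ref{sec:Lift}, passes to the index-$72$ subgroup corresponding to Hirzebruch's ball quotient, and verifies by a Magma computation of the maximal two-step nilpotent quotient that $z$ has infinite order there. Without some concrete verified example (by computation, by citing Dzambic--Roulleau, or otherwise), your argument establishes only a conditional statement, not the theorem.
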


For a uniform arithmetic lattice, one can take the fundamental group of the Cartwright--Steger surface \cite{CartwrightSteger}. We prove this indirectly, using a $21$-fold cover studied by the first author \cite{StoverHurwitz} and further explored by Dzambic and Roulleau \cite{DzambicRoulleau}. See Theorem~\ref{thm:StoverRF}. A nonuniform arithmetic example is the Picard modular group over the Eisenstein integers. All of our examples are commensurable with Deligne--Mostow lattices \cite{DeligneMostow1, MostowSigma}. See \S\ref{sec:Ex}.

The first method of constructing examples, presented in \S\ref{sec:Lift}, is more computational in nature. Replacing $\Gam$ with its preimage in $\SU(n,1)$ and using the fact that $\Gam \bs \SU(n,1) / \SU(n)$ has fundamental group $\wt{\Gam}$, we describe a method for constructing a finite presentation for $\wt{\Gam}$ using one for $\Gam$. This is used in particular to provide the nonuniform examples in Theorem~\ref{thm:ExsExist}. Our second method, inspired by ideas going back at least to work of Sullivan in the 70s (see \cite[Ch.\ 3]{ABCKT}), uses the cup product structure on $H^*(\Gam \bs \bbB^n, \bbQ)$. The following, a special case of Corollary~\ref{cor:RFCentralKahler} and a consequence of a more general theorem about principal $\U(1)$ bundles over aspherical manifolds proved in \S\ref{sec:Beauville}, gives a sharp answer as to when the classical strategy for $n = 1$ can be used for $n > 1$ to prove residual finiteness of $\wt{\Gam}$.

\begin{thm}\label{thm:GeneralCup}
Let $\Gam < \PU(n,1)$ be a torsion-free cocompact lattice and $X = \Gam \bs \bbB^n$. If $\wt{\Gam}$ is the preimage of $\Gam$ in the universal cover of $\PU(n,1)$, then there is a two-step nilpotent quotient of $\wt{\Gam}$ that is injective on the center of $\wt{\Gam}$ if and only if the K\"ahler class on $X$ is in the image of the cup product map
\[
c_\bbQ : \bigwedge\nolimits^2 H^1(X, \bbQ) \lra H^2(X, \bbQ).
\]
\end{thm}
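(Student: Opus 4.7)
The plan is to recast the statement as a computation on the rationalized lower central series quotient, and then to combine the Hopf--Stallings exact sequence with the Gysin sequence of the $S^1$-bundle $Y \to X$ (where $Y$ is the unit circle bundle of $K_X$, so that $\pi_1(Y) = \wt{\Gam}$). Every two-step nilpotent quotient of $\wt{\Gam}$ factors through the maximal one $\wt{\Gam}/L_3\wt{\Gam}$, where $L_k$ denotes the $k$-th term of the lower central series, so a two-step nilpotent quotient injective on the center $\bbZ \le \wt{\Gam}$ exists if and only if the central generator $z$ has infinite order in $\wt{\Gam}/L_3\wt{\Gam}$. Because the extension class $e \in H^2(\Gam, \bbQ) = H^2(X, \bbQ)$ is a nonzero rational multiple of the K\"ahler class $\omega$, the image of $z$ in $\wt{\Gam}^{\mathrm{ab}} \otimes \bbQ$ vanishes, so the question reduces to whether the image of $z$ in $(L_2\wt{\Gam}/L_3\wt{\Gam}) \otimes \bbQ$ is nonzero.

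The Hopf--Stallings exact sequence
\[
H_2(G,\bbQ) \to \bigwedge\nolimits^2 H_1(G, \bbQ) \to (L_2 G/L_3 G)\otimes \bbQ \to 0,
\]
together with the duality between the first map and the cup product $c_G : \bigwedge^2 H^1(G, \bbQ) \to H^2(G, \bbQ)$, yields a natural isomorphism $(L_2 G/L_3 G)\otimes \bbQ \cong \ker(c_G)^*$. Applying this to $\Gam$ and $\wt{\Gam}$, using $H^1(\wt{\Gam}, \bbQ) \cong H^1(\Gam, \bbQ) = H^1(X, \bbQ)$ and $c_{\wt{\Gam}} = p^* \circ c_\bbQ$ for $p : \wt{\Gam} \to \Gam$, I would identify
\[
\ker(c_{\wt{\Gam}}) = c_\bbQ^{-1}(\ker p^*) = c_\bbQ^{-1}(\bbQ \cdot e) \subseteq \bigwedge\nolimits^2 H^1(X, \bbQ),
\]
since the Gysin sequence for $Y \to X$ identifies the kernel of $p^* : H^2(X, \bbQ) \to H^2(Y, \bbQ)$ with the line $\bbQ \cdot e$. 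Hence $\ker(c_{\wt{\Gam}})/\ker(c_\bbQ)$ is one-dimensional when $\omega \in \mathrm{image}(c_\bbQ)$ (equivalently, $e \in \mathrm{image}(c_\bbQ)$) and zero otherwise. Dualizing, the induced surjection $(L_2\wt{\Gam}/L_3)\otimes\bbQ \twoheadrightarrow (L_2\Gam/L_3)\otimes\bbQ$ has a kernel of corresponding dimension, and the image of $z$ lies in this kernel. If $\omega \notin \mathrm{image}(c_\bbQ)$, then this kernel is zero, so $z$ is torsion in $\wt{\Gam}/L_3$ and no quotient as in the statement exists.

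For the converse, given $\omega = \sum_{i=1}^k \alpha_i \cup \beta_i$ in $H^2(X, \bbQ)$, I would construct an explicit two-step nilpotent quotient with $z$ of infinite order. Rescale so $\sum \alpha_i \cup \beta_i = e$ in $H^2(\Gam, \bbQ)$, pull back to rational characters $\tilde{\alpha}_i, \tilde{\beta}_i : \wt{\Gam} \to \bbQ$, and use $p^* e = 0 \in H^2(\wt{\Gam}, \bbQ)$ (the pullback of a central extension along the extending group is split) to write $\sum \tilde{\alpha}_i \cup \tilde{\beta}_i = \delta \gamma$ for some $\gamma : \wt{\Gam} \to \bbQ$. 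The assignment $\rho : g \mapsto (\tilde{\alpha}_1(g), \tilde{\beta}_1(g), \dots, \tilde{\alpha}_k(g), \tilde{\beta}_k(g), \gamma(g))$ then defines a homomorphism to the rational $(2k+1)$-dimensional Heisenberg group $H_k(\bbQ)$. To verify $\rho(z) \ne 0$, fix a set-theoretic section $s$ of $p$ and define $\nu : \wt{\Gam} \to \bbZ$ by $g = z^{\nu(g)} s(p(g))$, so that $p^* e = -\delta \nu$ at the cocycle level. Comparing the two coboundary expressions for $p^*e$, the sum $\gamma + \nu$ differs from the pullback of a correction cochain by a homomorphism $\wt{\Gam} \to \bbQ$, and any such homomorphism vanishes on $z$ (since the image of $z$ in $\wt{\Gam}^{\mathrm{ab}}$ is torsion). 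Evaluating at $z$ gives $\gamma(z) = -\nu(z) = -1$, so $\rho(z)$ is a nonzero central element of a torsion-free nilpotent group; thus $z$ has infinite order in the image (and in $\wt{\Gam}/L_3$).

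The main obstacle is the final cocycle calculation: the Hopf--Gysin framework shows only that the image of $z$ in $(L_2\wt{\Gam}/L_3)\otimes\bbQ$ lives inside an at-most-one-dimensional $\bbQ$-line, but verifying that the image genuinely generates that line when $\omega$ lies in the image of the cup product requires the explicit construction of $\gamma$ and the identification $\gamma(z) = -\nu(z)$ via the canonical coboundary $\nu$ coming from the section $s$.
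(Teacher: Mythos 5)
Your proposal is correct, and its first half is essentially the homological dual of the paper's argument: the identification $(L_2\wt{\Gam}/L_3\wt{\Gam})\otimes\bbQ \cong \ker(c_{\wt{\Gam}})^*$ is exactly the Sullivan--Beauville sequence the paper invokes (Proposition~\ref{prop:Beauville}), and your computation $\ker(c_{\wt{\Gam}}) = c_\bbQ^{-1}(\bbQ\cdot e)$ via the Gysin (equivalently, five-term) sequence reproduces the dimension count in Lemma~\ref{lem:BundleCohomology} and the proof of Theorem~\ref{thm:CentralRF1}; in particular your ``only if'' direction matches the paper. Where you genuinely diverge is the ``if'' direction. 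The paper closes the loop purely by the dimension count: in Lemma~\ref{lem:NilQuo} it observes that the kernel of the induced surjection $\wt{\Del}/[\wt{\Del},\wt{\Gam}] \to \Del/[\Del,\Gam]$ is precisely the cyclic group generated by $\wh{z}{\,}^m$ (since $N = \wt{N}/\langle \wh{z}\rangle$), so after tensoring with $\bbQ$ (flatness) the kernel is \emph{spanned} by the image of $z^m$, and one-dimensionality of that kernel already forces $z$ to have infinite order -- no extra construction needed, and indeed you could shortcut your final step with this same observation. You instead resolve the issue you flag by an explicit rational Heisenberg quotient built from classes $\alpha_i,\beta_i$ with $\sum \alpha_i\cup\beta_i = e$, a primitive $\gamma$ of the pulled-back cocycle, and the section cocycle $\nu$ with $p^*e = -\delta\nu$, giving $\gamma(z) = -1$; this computation is correct (the correction cochain is automatically normalized and homomorphisms to $\bbQ$ kill $z$ since $e \neq 0$ in $H^2(\Gam,\bbQ)$), and it is the direct generalization of the classical $n=1$ argument via the integer Heisenberg group mentioned at the start of \S\ref{sec:Beauville}. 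What each route buys: the paper's diagram chase is shorter and shows both implications come from a single rank equality, while your construction is more constructive, producing an explicit two-step nilpotent quotient (a subgroup of $H_k(\bbQ)$) on which the center survives, which could be useful when one wants concrete quotients rather than an existence statement.
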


If $\Gam < \PU(n,1)$ is an arithmetic lattice defined by a hermitian form over a CM field, it is known that there is a torsion-free subgroup of finite index $\Gam^\prime \le \Gam$ so that $X^\prime = \Gam^\prime \bs \bbB^n$ has $H^1(X^\prime, \bbQ)$ nontrivial \cite{Kazhdan, Shimura}. In fact, Clozel proved that one can pass to a possible smaller subgroup and assume that the image of $c_\bbQ : \bigwedge\nolimits^2 H^1(X^\prime, \bbQ) \to H^2(X^\prime, \bbQ)$ is nontrivial \cite{Clozel}. It seems more subtle to ensure that the K\"ahler class is eventually in the image. A positive answer to the following question would, via Theorem~\ref{thm:GeneralCup}, prove residual finiteness of preimages of these arithmetic lattices in the universal cover of $\PU(n,1)$.

\begin{qtn}\label{qtn:FindCup}
Let $\Gam < \PU(n,1)$ be a cocompact arithmetic lattice defined by a hermitian form over a CM field. Is there a torsion-free finite index subgroup $\Gam^\prime \le \Gam$ so that $X^\prime = \Gam^\prime \bs \bbB^n$ has the property that the K\"ahler class on $X^\prime$ is in the image of the map $c_\bbQ : \bigwedge\nolimits^2 H^1(X^\prime, \bbQ) \to H^2(X^\prime, \bbQ)$?
\end{qtn}

\begin{rem}
After submitting this paper, we answered Question~\ref{qtn:FindCup} in the positive and hence gave a positive answer to Question~\ref{qtn:RF?} for cocompact arithmetic lattices in $\PU(n,1)$ of simple type \cite{StoverToledo2}. This was independently proved by Richard Hill using similar but ultimately different methods \cite{Hill}.
\end{rem}

In the second half of this paper we show how residual finiteness can be used to construct algebraic surfaces that are not ball quotients but have fundamental group isomorphic to a lattice in $\PU(2,1)$.  To state our results, in the rest of this section we use the following notation: $G =\PU(2,1)$, $d>1$ is an integer, $G_d$ is the connected $d$-fold cover of $G$, $\Gamma < G$ is a torsion-free, cocompact lattice, and $\Gamma_d$ is the preimage of $\Gamma$ in $G_d$.  So $\Gamma_d$ fits into a central extension:
\begin{equation}\label{eq:gamd}
1 \lra \bbZ / d \lra \Gam_d \overset{\pi_d}{\lra} \Gam \lra 1,
\end{equation}
and $\Gam_d$ is a cocompact lattice in $G_d$.
 
The construction of Nori and Catanese--Koll\'ar mentioned above can be used to produce smooth projective varieties with fundamental group isomorphic to $\Gamma_d$. In \S\ref{sec:Realize} we will explain in detail one version of this construction, as sketched in \cite[Ex.~8.15]{ABCKT}. Given $\Gam$ and $d$, this construction produces  a smooth projective surface $M(\Gamma,d)$ with $\pi_1(M(\Gamma,d))\cong \Gamma_d$. While the interest in \cite[Ex.~8.15]{ABCKT} was to apply the construction to non-residually finite lattices, we are interested in applying it in the opposite situation.  Our main observation is that, when $\Gam_d$ is residually finite, $M(\Gam,d)$ has \'etale covers with fundamental group a subgroup of finite index in $\Gam$.  These covers have fundamental group isomorphic to a lattice in $G$, but are not ball quotients. We summarize the situation in the following theorem:

\begin{thm}\label{thm:dFold}
In the notation established above:
\begin{enumerate}

\item For each $\Gam$ and $d$ there is a smooth projective surface $M(\Gam, d)$ with $\pi_1(M(\Gam,d)) \cong \Gam_d$ and $\pi_2(M(\Gam,d)) \ne 0$.

\item If $\Gam_d$ is residually finite, there exist subgroups $\Lam\subset \Gam$ of finite index over which \eqref{eq:gamd} has a section $s:\Lam\to\Gam_d$.  

\item If $M' = M'(\Lam, s.d)$ denotes the cover of $M = M(\Gam,d)$ corresponding to the subgroup $s(\Lam) < \Gam_d$, then $M'$ is an \'etale cover of $M(\Gam,d)$ with $\pi_1(M')\cong\Lam$,  $\pi_2(M')\ne 0$, and $M'$ is not birationally equivalent to a ball quotient.

\end{enumerate}
\end{thm}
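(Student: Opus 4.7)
\emph{Part (1).} The surface $M(\Gam, d)$ is constructed by the Nori/Catanese--Koll\'ar procedure to be given in Section~\ref{sec:Realize}: the extension class of \eqref{eq:gamd}, living in $H^2(\Gam, \bbZ/d) \cong H^2(X, \bbZ/d)$ for $X = \Gam\bs\bbB^2$, is realized geometrically as a suitable $d$-fold cyclic cover of $X$ branched along a smooth divisor in $|L^d|$ for a carefully chosen line bundle $L$ (Bertini guarantees smoothness of the branch divisor), and a Van~Kampen argument identifies $\pi_1(M(\Gam,d))$ with $\Gam_d$. For nonvanishing of $\pi_2$: since $d>1$, $\Gam_d$ contains the torsion subgroup $\bbZ/d$, forcing $M$ to be non-aspherical (a closed aspherical manifold has torsion-free $\pi_1$, else a cover would realize a finite group by a finite-dimensional $K(G,1)$), and the explicit branched-cover description makes clear that the obstruction appears already at $\pi_2$.

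\emph{Part (2).} Given residual finiteness of $\Gam_d$, I would separate the finitely many nontrivial central elements: for each $z \in (\bbZ/d) \ssm \{1\}$ choose a finite-index normal subgroup $N_z \trieq \Gam_d$ with $z \notin N_z$, and set $H := \bigcap_z N_z$. Then $H \trieq \Gam_d$ has finite index and $H \cap (\bbZ/d) = \{1\}$, so the restriction $\pi_d|_H$ is injective onto a finite-index (automatically normal) subgroup $\Lam := \pi_d(H) \le \Gam$, and $s := (\pi_d|_H)^{-1} : \Lam \to \Gam_d$ is the required section.

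\emph{Part (3).} The cover $M' \to M$ corresponding to the finite-index subgroup $s(\Lam) = H \le \pi_1(M) = \Gam_d$ is finite and unramified, hence \'etale, with $\pi_1(M') \cong \Lam$ and $\pi_2(M') \cong \pi_2(M) \ne 0$ (finite covers preserve higher homotopy). Suppose for contradiction that $M'$ is birational to a smooth compact ball quotient $Y$. Birational invariance of $\pi_1$ for smooth projective surfaces gives $\pi_1(Y) \cong \Lam$, so $Y = \Lam \bs \bbB^2$; since ball quotients are minimal of general type, $Y$ is the minimal model of $M'$ and $M' \to Y$ is a composition of blow-downs. The deck group $G := \Gam_d/H$ acts freely and holomorphically on $M'$ and, the minimal model being canonical, descends to a holomorphic action on $Y$. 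If a nontrivial $g \in G$ fixed a point of $Y$, then $g$ would preserve the tree of exceptional curves over that point in $M'$; as a holomorphic automorphism of a tree of $\bbP^1$'s, $g$ either stabilizes an irreducible component (on which it acts as a nontrivial projective map with a fixed point) or fixes a node where two components meet, both contradicting freeness on $M'$. Hence $G$ acts freely on $Y$, so $Y/G$ is a smooth compact ball quotient birational to $M = M'/G$. But then $\pi_1(M) \cong \pi_1(Y/G)$ is a torsion-free cocompact lattice in $\PU(2,1)$, which has trivial center (the centralizer of a Zariski-dense discrete subgroup in $\PU(2,1)$ is the center of $\PU(2,1)$, which is trivial), contradicting the nontrivial central subgroup $\bbZ/d \le \Gam_d = \pi_1(M)$ for $d > 1$.

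\emph{Main obstacle.} The crux is the non-birationality step: one must verify that the deck group descends to a \emph{free} holomorphic action on the minimal model $Y$, so that $Y/G$ is a genuine smooth ball quotient and its fundamental group really does coincide with $\pi_1(M)$; once this is in hand, the center of $\Gam_d$ supplies the contradiction almost automatically. The Nori/Catanese--Koll\'ar construction underlying Part (1), while substantial, is standard once the relevant cohomology class in $H^2(X,\bbZ/d)$ has been identified.
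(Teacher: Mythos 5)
The main gap is your justification of $\pi_2(M(\Gam,d)) \neq 0$ in part (1). Torsion in $\Gam_d$ only shows that $M$ is not aspherical, i.e.\ that \emph{some} higher homotopy group is nonzero; it does not locate the failure in degree two. And the claim that ``the explicit branched-cover description makes clear that the obstruction appears already at $\pi_2$'' is precisely what is \emph{not} clear: in the paper's fiber-product model the Lefschetz theorem (Theorem~\ref{thm:Morse}) only yields a surjection $\pi_2(M) \to \pi_2(V)$ with $\pi_2(V)=0$, and the paper explicitly remarks that no conclusion about $\pi_2(M)$ can be drawn from the construction alone. The paper proves $\pi_2(M)\neq 0$ by contradiction, using residual finiteness of $\Gam_d$: if $\pi_2(M)=0$, then the \'etale cover $M'$ has $\pi_2=0$ and torsion-free lattice fundamental group, hence is a ball quotient by Theorem~\ref{thm:pi2=0}, so the lifted map $M'\to N$ would be a degree $\pm 1$ homotopy equivalence, contradicting the covering diagram which forces degree $d^3$. (Alternatively, Delzant's Theorem~\ref{thm:delzant} applies since $\Gam_d$ is not commensurable with a surface group, giving $\pi_2(M)\neq 0$ unconditionally.) You need one of these inputs; ``non-aspherical, hence $\pi_2\neq 0$'' is not a valid step. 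Two smaller points in part (1): you must say how $L$ is ``carefully chosen'' --- one needs $c_1(L)$ to reduce mod $d$ to the extension class of \eqref{eq:gamd} while being positive enough, e.g.\ a power of the canonical bundle congruent to it mod $d$ --- and the identification of $\pi_1$ of the cyclic cover with $\Gam_d$ is not a bare van Kampen computation; it requires a Nori/Lefschetz-type statement (centrality and cyclicity of $\ker(\pi_1(N\ssm D)\to\pi_1(N))$ for $D$ ample and smooth), which is exactly the role Theorem~\ref{thm:Morse} plays in the paper's version of the construction.

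Part (2) is exactly the paper's Lemma~\ref{lem:CentralRF}. Part (3), by contrast, is correct and genuinely different from the paper: the paper observes that $M'$ has no rational curves (it maps finitely onto a ball quotient), uses Lemma~\ref{lem:KollarRat} to promote a birational map $M'\dashrightarrow Z$ to a biholomorphism, and then contradicts $\pi_2(M')\neq 0$; you instead note that a ball quotient $Y$ birational to the general-type surface $M'$ would be its minimal model, descend the deck group $G=\Gam_d/s(\Lam)$ to $Y$, show freeness survives via the fixed-point argument on the trees of exceptional rational curves, and conclude that $M=M'/G$ would be birational to the smooth ball quotient $Y/G$, so $\pi_1(M)\cong\Gam_d$ would be torsion-free --- contradicting the central $\bbZ/d$. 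This has the virtue of bypassing $\pi_2(M')\neq 0$ entirely (the paper's non-birationality argument leans on it), at the cost of needing $s(\Lam)$ normal in $\Gam_d$, which your part (2) construction does arrange. So: part (3) stands on its own; part (1) needs the paper's degree argument via Theorem~\ref{thm:pi2=0} (under residual finiteness) or Theorem~\ref{thm:delzant} to deliver the $\pi_2$ statement, plus a more careful treatment of the $\pi_1$ identification.
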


When $\Gam_d$ is residually finite, the third part of this theorem provides examples of smooth projective surfaces with fundamental group isomorphic to a lattice in $G$ but not birationally equivalent  to a ball quotient.  It is known that such surfaces  exist, but it is useful  to have constructions.  In a more general setting, it is known that for any smooth projective variety of dimension at least two there are many varieties with the same fundamental group. The earliest construction seems to be by Cornalba \cite{Cornalba}, discussed on \cite[p.~36]{Fulton}, and another construction is given in \cite[\S 1]{catanese}. Common ingredients of these constructions, including the one given here, is that they appear as branched covers, and their properties are verified using Lefschetz theorems.

Recent work of Troncoso and Urz\'ua \cite{TroncosoUrzua} constructs many more smooth projective surfaces with the same fundamental group as a ball quotient. A key distinction between our work and \cite{TroncosoUrzua} is that we build surfaces whose fundamental group is a lattice in a finite cover of $\PU(2,1)$, which their work does not do, but \cite{TroncosoUrzua} construct surfaces with the same fundamental group as a given ball quotient with Chern slopes dense in $[1,3]$, which is much more robust than what our methods produce (e.g., see Remark~\ref{rem:FakeChern}). In contrast, a torsion-free nonuniform lattice in $\PU(2,1)$ cannot be the fundamental group of a smooth projective surface \cite[Thm.\ 2]{ToledoExs}. As one specific application of our methods, we will prove the following at the conclusion of \S\ref{sec:Realize}.

\begin{thm}\label{thm:FakeFakes}
Let $X$ be a fake projective plane whose canonical divisor $K_X$ is divisible by three. Then there exists a minimal smooth projective surface $Y$ of general type with $\pi_1(Y) \cong \pi_1(X)$ that is not a fake projective plane.
\end{thm}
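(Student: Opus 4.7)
The plan is to apply Theorem~\ref{thm:dFold} with $d = 3$ to $\Gam = \pi_1(X)$ and take $Y$ to be the minimal model of the resulting surface $M'$. First, I would observe that the hypothesis $3 \mid K_X$ forces the central extension
\[
1 \lra \bbZ/3 \lra \Gam_3 \lra \Gam \lra 1
\]
to split. For a torsion-free cocompact lattice $\Gam < \PU(2,1)$ with $X = \Gam \bs \bbB^2$, the class of this extension in $H^2(\Gam, \bbZ/3) \cong H^2(X, \bbZ/3)$ is (up to sign) the mod-$3$ reduction of $c_1(K_X)$, via the standard identification of the canonical bundle of $X$ with the $\bbZ$-extension $\wt{\Gam} \to \Gam$. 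For a fake projective plane, the exponential sequence together with $H^1(X, \calO_X) = 0$ shows that $\mathrm{Pic}(X) \hookrightarrow H^2(X, \bbZ)$, so divisibility of $K_X$ by $3$ in the Picard group is the same as divisibility in $H^2(X, \bbZ)$, making the mod-$3$ obstruction vanish. Hence there is a homomorphic section $s : \Gam \to \Gam_3$, and $\Gam_3 \cong \Gam \times \bbZ/3$ is residually finite because $\Gam$ is (by Malcev, as $\PU(2,1)$ is linear).

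With this section in hand, I would apply Theorem~\ref{thm:dFold}(3) taking $\Lam = \Gam$ to obtain a smooth projective surface $M' = M'(\Gam, s, 3)$ with $\pi_1(M') \cong \Gam$, $\pi_2(M') \ne 0$, and $M'$ not birationally equivalent to any ball quotient. I would then let $Y$ denote the minimal model of $M'$, so that $\pi_1(Y) \cong \pi_1(M') \cong \Gam \cong \pi_1(X)$. Since $\Gam$ is a torsion-free cocompact lattice in $\PU(2,1)$, no smooth projective surface of Kodaira dimension $\le 1$ can have $\Gam$ as its fundamental group (the fundamental groups of such surfaces are virtually abelian or virtually extensions of a Fuchsian group by an abelian group), so $M'$, and hence $Y$, is of general type. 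Standard surface theory then guarantees that $Y$ is smooth, minimal, and unique in its birational class.

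Finally, $Y$ cannot be a fake projective plane, because any fake projective plane is a ball quotient, which would force $M'$ to be birationally equivalent to a ball quotient and contradict Theorem~\ref{thm:dFold}(3).

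The main obstacle is the first step: carefully identifying the mod-$3$ class of the central extension \eqref{eq:gamd} with the mod-$3$ reduction of $c_1(K_X)$ and using this to split the extension. Once the splitting of $\Gam_3$ is established, the construction of $Y$ and the verification of its properties are essentially automatic from Theorem~\ref{thm:dFold} and routine surface theory.
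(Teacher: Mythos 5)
Your proposal is correct and follows essentially the same route as the paper: apply the $d=3$ construction, use divisibility of $K_X$ by three to split the central $\bbZ/3$ extension (the paper cites \cite[\S 8.9]{Kollar} for this, you re-derive it via the extension class being the mod-$3$ reduction of $c_1(K_X)$), pass to the degree-$3$ \'etale cover with fundamental group $\Gam$, and rule out birational equivalence to a ball quotient via the degree argument packaged in Theorem~\ref{thm:dFold}(3). The only cosmetic difference is that you pass to the minimal model and justify general type by classification of fundamental groups, whereas in the paper's construction the cover already has a finite map onto the fake projective plane, so it has no rational curves and is itself minimal of general type.
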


Recall that fake projective planes were classified in combined work of Prasad--Yeung \cite{PrasadYeung, PrasadYeungErrat} and Cartwright--Steger \cite{CartwrightSteger}. Of the 100 fake projective planes, all but 8 have the property that $K_X$ is divisible by three \cite[\S A.1]{PrasadYeungErrat}. However, we were not able to answer the following special case of Question~\ref{qtn:RF?}.

\begin{qtn}
Let $X$ be a fake projective plane and $\wt{\Gam}$ be the preimage of $\pi_1(X)$ in the universal cover of $\PU(2,1)$. Is $\wt{\Gam}$ residually finite?
\end{qtn}

\medskip

This paper is organized as follows. In \S\ref{sec:RF} we begin with some basic facts about residual finiteness. Then \S\ref{sec:Struc} describes explicit coordinates on the universal cover of $\SU(n,1)$ that are used in \S\ref{sec:Lift} to describe our method for lifting a presentation of a lattice in $\SU(n,1)$ to a presentation of its lift to the universal cover. The proof of Theorem~\ref{thm:GeneralCup} and generalizations are contained in \S\ref{sec:Beauville}. In \S\ref{sec:Samepi1}, we prove that a smooth projective surface with $\pi_2$ trivial and fundamental group a lattice in $\PU(2,1)$ is biholomorphic to a ball quotient. We also sketch a proof of a related theorem due to Thomas Delzant, whose argument was communicated to us by Pierre Py: If $X$ is a compact K\"ahler surface with $\pi_1(X)$ not commensurable with a surface group, then $X$ is aspherical if and only if $\pi_2(X)$ is trivial. Then \S\ref{sec:Realize} proves Theorem~\ref{thm:dFold} and related results. Finally, in \S\ref{sec:Ex} we discuss examples that suffice to prove Theorem~\ref{thm:ExsExist}.

\subsubsection*{Acknowledgments} The authors thank Gopal Prasad for helpful correspondence regarding the metaplectic kernel, and Fabrizio Catanese, Pierre Py and Burt Totaro for helpful comments. We also thank Thomas Delzant for allowing us to include a sketch of his proof of Theorem~\ref{thm:delzant}. Finally, we thank the referee for helpful suggestions. Stover was partially supported by Grant Number DMS-1906088 from the National Science Foundation.

\section{Residual finiteness}\label{sec:RF}

Here, we collect some basic facts about residual finiteness. Recall that a group $\Gam$ is residually finite if, given any nontrivial $\gam \in \Gam$, there exists a finite group $F$ and a homomorphism $\rho : \Gam \to F$ so that $\rho(\gam)$ is not the identity element of $F$. Finitely generated linear groups are residually finite by a famous theorem of Malcev \cite{MalcevLinear} (also see \cite[\S 2.1]{LongReid}). We begin with a basic and well-known fact about residual finiteness; see for instance \cite[Lem.\ 2.1.3]{LongReid} for a proof.

\begin{lem}\label{lem:CommensurableRF}
Suppose that $\Gam$ is a finitely generated group and $\Del \le \Gam$ is a finite index subgroup. Then $\Gam$ is residually finite if and only if $\Del$ is residually finite.
\end{lem}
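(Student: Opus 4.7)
The plan is to handle the two implications separately; only one direction requires any real work. For the forward direction, any nontrivial $\delta \in \Del$ is nontrivial in $\Gam$, so residual finiteness of $\Gam$ produces a homomorphism $\rho : \Gam \to F$ onto a finite group $F$ with $\rho(\delta) \ne 1$, and then $\rho|_\Del$ is a finite quotient of $\Del$ that separates $\delta$ from the identity. No further hypothesis on $\Del$ is used here.

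For the reverse direction, my first move is to reduce to the case that $\Del$ is normal by replacing $\Del$ with its normal core $\Del_0 = \bigcap_{g \in \Gam} g \Del g^{-1}$. Because $\Del$ has only finitely many $\Gam$-conjugates (one for each coset of its normalizer, which contains $\Del$), $\Del_0$ still has finite index in $\Gam$. As $\Del_0$ has finite index in $\Del$, the forward direction applied to the pair $\Del_0 \le \Del$ shows that $\Del_0$ is residually finite. This replacement costs nothing but buys normality.

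Now fix a nontrivial $\gam \in \Gam$. If $\gam \notin \Del_0$, the quotient map $\Gam \to \Gam/\Del_0$ is already a finite quotient detecting $\gam$. Otherwise $\gam \in \Del_0$, and residual finiteness of $\Del_0$ provides a normal finite-index subgroup $N \trianglelefteq \Del_0$ with $\gam \notin N$. The one subtle point is that $N$ need not be normal in $\Gam$; my fix is to intersect its $\Gam$-conjugates. Here I use that $\Del_0$, being a finite-index subgroup of the finitely generated group $\Gam$, is itself finitely generated, and hence (by M.~Hall's theorem on finitely generated groups) has only finitely many subgroups of each finite index. Therefore the family $\{ g N g^{-1} : g \in \Gam \}$ is finite, so $N' = \bigcap_{g \in \Gam} g N g^{-1}$ has finite index in $\Del_0$, is normal in $\Gam$, and is contained in $N$, giving $\gam \notin N'$. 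Then $\Gam / N'$ is a finite quotient of $\Gam$ in which $\gam$ survives.

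The only genuine obstacle is the finiteness-of-conjugates step: if $\Gam$ were not finitely generated, the $\Gam$-orbit of $N$ under conjugation could be infinite, so $N'$ could fail to have finite index. This is where the hypothesis that $\Gam$ is finitely generated enters, and it is the one place in the argument that does any real work.
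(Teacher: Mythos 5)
Your proof is correct, and since the paper does not prove this lemma at all (it simply cites Long--Reid, Lem.~2.1.3), your argument supplies the standard proof: the forward direction by restriction, and the reverse direction by passing to the normal core $\Del_0$ and then intersecting the $\Gam$-conjugates of a separating finite-index normal subgroup $N \trianglelefteq \Del_0$.

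One remark on efficiency and on your closing claim. The appeal to M.~Hall's theorem (finitely many subgroups of each finite index in a finitely generated group) is much heavier machinery than the situation requires, and in fact the finite generation hypothesis does no work in this argument. Since $N \trianglelefteq \Del_0$ and $\Del_0 \trianglelefteq \Gam$, the conjugate $gNg^{-1}$ depends only on the coset $g\Del_0$: if $g' = gd$ with $d \in \Del_0$, then $g'N(g')^{-1} = g(dNd^{-1})g^{-1} = gNg^{-1}$. Hence the $\Gam$-orbit of $N$ under conjugation has at most $[\Gam : \Del_0]$ elements, so $N' = \bigcap_{g} gNg^{-1}$ has finite index in $\Gam$ with no finiteness assumption on generation. (Alternatively, one can simply take the kernel of the coset action $\Gam \to \mathrm{Sym}(\Gam/N)$, which is a normal subgroup of index at most $[\Gam:N]!$ contained in $N$.) So your final paragraph's assertion that finite generation ``is the one place in the argument that does any real work'' is mistaken: the lemma's reverse direction holds for arbitrary groups, and the hypothesis in the statement is a convenience of the paper's setting rather than a necessity. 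This does not affect the validity of your proof of the lemma as stated, since finite generation is among its hypotheses.
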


We will also need the following simple lemma.

\begin{lem}\label{lem:CentralRF}
For $n \ge 2$, suppose that
\[
1 \lra \bbZ / n \lra \Gam_n \overset{\pi_n}{\lra} \Gam \lra 1
\]
is a central exact sequence. If $\Gam_n$ is residually finite, then there exists a finite index subgroup $\Gam^\prime \le \Gam$ that admits a section to $\Gam_n$ under $\pi_n$. Consequently, $\Gam_n$ has a finite index subgroup that is isomorphic to a finite index subgroup of $\Gam$.
\end{lem}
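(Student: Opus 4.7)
The plan is to exploit residual finiteness of $\Gam_n$ to find a finite-index subgroup that meets the central copy of $\bbZ/n$ trivially; such a subgroup maps isomorphically onto its image in $\Gam$, and the inverse of this isomorphism supplies the desired section. The key point is that the obstruction to splitting $\pi_n$ over a finite-index subgroup of $\Gam$ is entirely controlled by how the finitely many nontrivial elements of $\bbZ/n$ behave in finite quotients of $\Gam_n$.

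Concretely, I would enumerate the nontrivial elements $z_1, \dots, z_{n-1}$ of $\bbZ/n \le \Gam_n$ and, using residual finiteness of $\Gam_n$, choose for each $i$ a finite-index normal subgroup $N_i \trieq \Gam_n$ with $z_i \notin N_i$. Setting $N = \bigcap_{i=1}^{n-1} N_i$ yields a finite-index (normal) subgroup of $\Gam_n$ with $N \cap \bbZ/n = \{1\}$. Define $\Gam^\prime = \pi_n(N)$. Since $\pi_n$ is surjective, $[\Gam : \Gam^\prime] \le [\Gam_n : N] < \infty$, so $\Gam^\prime$ has finite index in $\Gam$. The restriction $\pi_n|_N : N \to \Gam^\prime$ is surjective by construction, and its kernel is $N \cap \ker \pi_n = N \cap \bbZ/n = \{1\}$, so it is an isomorphism. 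Taking $s$ to be the inverse of $\pi_n|_N$ composed with the inclusion $N \hookrightarrow \Gam_n$ produces a homomorphism $s : \Gam^\prime \to \Gam_n$ with $\pi_n \circ s = \id_{\Gam^\prime}$, which is the required section.

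The second assertion is immediate from the construction: $N$ is a finite-index subgroup of $\Gam_n$, $\Gam^\prime = s(\Gam^\prime)$ is a finite-index subgroup of $\Gam$, and $\pi_n|_N$ is an isomorphism between them. There is essentially no obstacle here; the only thing to verify is that a finite intersection of finite-index subgroups is again of finite index, and that centrality of $\bbZ/n$ is not actually used beyond the fact that $\bbZ/n = \ker \pi_n$ is finite, so that residual finiteness separates all of its nontrivial elements from the identity simultaneously.
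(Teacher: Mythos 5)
Your proposal is correct and follows essentially the same route as the paper: separate the finitely many nontrivial elements of the central $\bbZ/n$ in finite quotients, intersect (or equivalently take the kernel of the product homomorphism) to get a finite-index subgroup of $\Gam_n$ meeting $\ker\pi_n$ trivially, and observe that $\pi_n$ restricted to it is an isomorphism onto a finite-index subgroup of $\Gam$, whose inverse is the section. Your side remark that only finiteness (not centrality) of $\ker\pi_n$ is needed is also accurate.
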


\begin{pf}
Suppose that $\Gam_n$ is residually finite and that $\sig$ is a generator for $\ker(\pi_n)$. For each $1 \le j \le n-1$ there exists a finite group $F_j$ and a homomorphism $\rho_j : \Gam_n \to F_j$ so that $\rho(\sig^j)$ is nontrivial. The product homomorphism
\[
\rho = \left(\rho_1 \times \cdots \times \rho_{n-1}\right) : \Gam_n \lra \prod_{j = 1}^{n-1} F_j
\]
then has the property that $\rho(\sig^j)$ is nontrivial for each $1 \le j \le n-1$. If $\Gam^\prime = \ker(\rho)$, then $\Gam^\prime \cap \langle \sig \rangle = \{1\}$. Since $\langle \sig \rangle = \ker(\pi_n)$, we have that $\pi_n|_{\Gam^\prime}$ is an isomorphism onto its image. This proves the lemma.
\end{pf}

\begin{cor}\label{cor:PUlift}
For any $n \ge 1$, let $\Gam < \SU(n,1)$ be a finitely generated subgroup. Then there is a finite index subgroup $\Gam^\prime \le \Gam$ so that projection $\SU(n,1) \to \PU(n,1)$ restricts to an isomorphism on $\Gam^\prime$. In particular, every lattice in $\SU(n,1)$ contains a finite index subgroup isomorphic to a lattice in $\PU(n,1)$.
\end{cor}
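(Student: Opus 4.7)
The plan is to apply Lemma~\ref{lem:CentralRF} to the central extension obtained by restricting the projection $\SU(n,1) \to \PU(n,1)$ to $\Gam$. First, I would recall that the kernel of $\SU(n,1) \to \PU(n,1)$ is the center $Z(\SU(n,1))$, which consists of scalar matrices $\lam \cdot \Id$ with $\lam^{n+1} = 1$, and is therefore cyclic of order $n+1$. Setting $\Gam_0 = \Gam \cap Z(\SU(n,1))$, we get a finite cyclic central subgroup of $\Gam$ of order $m$ dividing $n+1$, and the projection restricts to an exact sequence
\[
1 \lra \Gam_0 \lra \Gam \overset{\pi}{\lra} \pi(\Gam) \lra 1
\]
with $\Gam_0$ central in $\Gam$.

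Next, I would verify the hypotheses of Lemma~\ref{lem:CentralRF}. Since $\SU(n,1)$ is a linear group and $\Gam$ is a finitely generated subgroup of it, Malcev's theorem \cite{MalcevLinear} ensures that $\Gam$ is residually finite. Applying Lemma~\ref{lem:CentralRF} to the displayed central extension (with $\bbZ / n$ there replaced by $\Gam_0 \cong \bbZ / m$) yields a finite-index subgroup $\Lam \le \pi(\Gam)$ and a section $s : \Lam \to \Gam$ of $\pi$. Then $\Gam^\prime = s(\Lam)$ is a finite-index subgroup of $\Gam$ on which $\pi$ is injective, which is precisely the first conclusion.

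For the second conclusion, I would observe that if $\Gam$ is a lattice in $\SU(n,1)$, then $\Gam^\prime$ is also a lattice (being of finite index). Since $Z(\SU(n,1))$ is finite, the projection $\SU(n,1) \to \PU(n,1)$ has finite kernel, so it takes discrete subgroups to discrete subgroups of the same covolume up to a finite factor; in particular $\pi(\Gam)$ is a lattice in $\PU(n,1)$, hence so is its finite-index subgroup $\Lam \cong \Gam^\prime$.

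There is no real obstacle here; the only mild subtlety is identifying $\Gam_0$ as a finite cyclic group so that the central extension fits into the framework of Lemma~\ref{lem:CentralRF}. Everything else is a direct citation of Malcev's theorem and an application of the preceding lemma.
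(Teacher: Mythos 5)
Your proof is correct and takes essentially the same route as the paper: identify $\Gam \cap Z(\SU(n,1))$ as a finite cyclic central subgroup, use Malcev's theorem to get residual finiteness of $\Gam$, and apply Lemma~\ref{lem:CentralRF} to the resulting central extension to obtain the finite-index subgroup on which the projection is injective. Your added justification of the ``in particular'' lattice statement is a harmless elaboration of what the paper leaves implicit.
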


\begin{pf}
Since $\SU(n,1)$ is linear, $\Gam$ is residually finite. Let $\sig$ be a generator for the intersection of $\Gam$ with the center of $\SU(n,1)$. The center of $\SU(n,1)$ is isomorphic to $\bbZ / (n + 1)$, so $\langle \sig \rangle$ is cyclic. Therefore if $\conj{\Gam}$ denotes the image of $\Gam$ in $\PU(n,1)$, we have a central exact sequence
\[
1 \lra \langle \sig \rangle \lra \Gam \lra \conj{\Gam} \lra 1
\]
to which Lemma~\ref{lem:CentralRF} applies, giving the desired $\Gam^\prime$.
\end{pf}

\begin{lem}\label{lem:RFJustCenter}
Suppose that $\Gam$ is a residually finite group and that
\[
1 \lra Z \lra \wt{\Gam} \lra \Gam \lra 1
\]
is an extension. If there is a homomorphism $\rho : \wt{\Gam} \to H$ onto a residually finite group $H$ so that $\rho|_Z$ is injective, then $\wt{\Gam}$ is residually finite.
\end{lem}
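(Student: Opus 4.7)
The plan is a direct two-case argument on whether a nontrivial element of $\wt{\Gam}$ lies in the kernel $Z$ or projects nontrivially to $\Gam$. Fix a nontrivial element $\wt{\gam} \in \wt{\Gam}$; I want to produce a homomorphism from $\wt{\Gam}$ to a finite group that does not kill $\wt{\gam}$.

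First I would handle the case where $\wt{\gam}$ has nontrivial image in $\Gam$ under the quotient map $q : \wt{\Gam} \to \Gam$. Since $\Gam$ is residually finite by hypothesis, there exists a finite group $F$ and a homomorphism $\varphi : \Gam \to F$ with $\varphi(q(\wt{\gam})) \ne 1$. The composition $\varphi \circ q : \wt{\Gam} \to F$ is then a map to a finite group that does not kill $\wt{\gam}$.

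Next I would treat the remaining case, where $q(\wt{\gam}) = 1$, i.e.\ $\wt{\gam} \in Z$. Since $\rho|_Z$ is injective and $\wt{\gam} \ne 1$, the element $\rho(\wt{\gam}) \in H$ is nontrivial. Residual finiteness of $H$ then produces a finite group $F'$ and a homomorphism $\psi : H \to F'$ with $\psi(\rho(\wt{\gam})) \ne 1$, so $\psi \circ \rho : \wt{\Gam} \to F'$ is the desired map. Since $\wt{\gam}$ was an arbitrary nontrivial element, this shows $\wt{\Gam}$ is residually finite.

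There is no real obstacle here; the lemma just repackages the observation that residual finiteness of an extension can be checked on the center and on the quotient separately, provided some auxiliary homomorphism is injective on the center. The only thing one should note is that $H$ need not itself be a quotient of $\wt{\Gam}$ by a finite-index subgroup---it is merely required to be residually finite---which is why the final composition through $\psi$ is necessary.
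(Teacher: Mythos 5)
Your proposal is correct and is essentially the same two-case argument the paper gives: project to $\Gam$ and use its residual finiteness when the image there is nontrivial, and otherwise use injectivity of $\rho|_Z$ together with residual finiteness of $H$ and a further finite quotient. Nothing is missing.
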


\begin{pf}
Given a nontrivial element $\al \in \wt{\Gam}$, if the projection $\conj{\al}$ of $\al$ to $\Gam$ is nontrivial, then there is a map from $\Gam$ to a finite group where the image of $\conj{\al}$ is nontrivial, hence there is a homomorphism from $\wt{\Gam}$ to a finite group so that the image of $\al$ is nontrivial. If $\conj{\al}$ is trivial, then $\al \in Z$. Then $\rho(\al) \in H$ is nontrivial, and there is a finite quotient of $H$, which we consider as a finite quotient of $\wt{\Gam}$, where the image of $\al$ is nontrivial. This proves that $\wt{\Gam}$ is residually finite.
\end{pf}

Finitely generated nilpotent groups are known to be residually finite. By Lemma~\ref{lem:CommensurableRF}, this follows from the fact that finitely generated nilpotent groups contain a torsion-free subgroup of finite index \cite[Thm.~3.21]{Hirsch} combined with Malcev's famous results that finitely generated torsion-free nilpotent groups are linear \cite{MalcevNilpotent} and that linear groups are residually finite \cite{MalcevLinear}. The following is then a special case of Lemma~\ref{lem:RFJustCenter}.

\begin{cor}\label{cor:RFNilQuo}
Suppose that $\Gam$, $\wt{\Gam}$, and $Z$ are as in Lemma~\ref{lem:RFJustCenter}. If $\wt{\Gam}$ is finitely generated and there is a nilpotent quotient $\calN$ of $\wt{\Gam}$ so that $Z$ injects into $\calN$, then $\wt{\Gam}$ is residually finite.
\end{cor}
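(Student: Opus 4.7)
The plan is to deduce this directly from Lemma~\ref{lem:RFJustCenter} by exhibiting $\calN$ as a valid choice of target group $H$. Specifically, I would let $\rho : \wt{\Gam} \to \calN$ be the given quotient map. The two hypotheses of Lemma~\ref{lem:RFJustCenter} that need to be verified are that $\rho$ surjects onto a residually finite group and that $\rho|_Z$ is injective. Surjectivity is automatic since $\calN$ is a quotient of $\wt{\Gam}$, and injectivity on $Z$ is exactly the assumption placed on $\calN$.

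The only remaining point is that $\calN$ is residually finite. Since $\wt{\Gam}$ is finitely generated, so is its quotient $\calN$, and $\calN$ is nilpotent by hypothesis. Thus I can cite the paragraph immediately preceding the corollary, which records that every finitely generated nilpotent group is residually finite (via Hirsch's theorem on torsion-free finite-index subgroups combined with Malcev's linearity and residual finiteness results).

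With these two observations in place, Lemma~\ref{lem:RFJustCenter} applies verbatim and yields residual finiteness of $\wt{\Gam}$. There is really no obstacle here: the corollary is essentially a packaging of Lemma~\ref{lem:RFJustCenter} specialized to the common situation in which the witnessing quotient $H$ is nilpotent, so that residual finiteness of $H$ comes for free from the standing assumption that $\wt{\Gam}$ is finitely generated. The main conceptual content is that nilpotent quotients are a convenient source of such $H$, a point that will be exploited repeatedly in the sequel.
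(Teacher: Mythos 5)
Your proposal is correct and matches the paper's intended argument exactly: the paper treats the corollary as an immediate specialization of Lemma~\ref{lem:RFJustCenter}, using the fact recorded just beforehand that finitely generated nilpotent groups are residually finite, with finite generation of $\calN$ following from that of $\wt{\Gam}$. Nothing further is needed.
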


We return to the case of interest for this paper, namely where $\wt{\Gam}$ is a central extension of a finitely generated residually finite group $\Gam$ by $\bbZ$. Corollary~\ref{cor:RFNilQuo} implies that to prove $\wt{\Gam}$ is residually finite, it suffices to find a nilpotent quotient of $\wt{\Gam}$ into which $\bbZ$ injects. We also want to study the central exact sequence obtained by reducing the central element of $\wt{\Gam}$ modulo an integer $d$. It is not immediately clear that residual finiteness of $\wt{\Gam}$ implies residual finiteness of each $\bbZ / d$ central extension, only infinitely many of the extensions. Using separability properties of finitely generated nilpotent groups, we can prove the stronger statement:

\begin{lem}\label{lem:RFmodn}
Suppose that $\Gam$ is a finitely generated residually finite group and that
\[
1 \lra \bbZ \lra \wt{\Gam} \lra \Gam \lra 1
\]
is a central exact sequence associated with $\phi \in H^2(\Gam, \bbZ)$. For $d \in \bbN$, let $\phi_d \in H^2(\Gam, \bbZ/d)$ be the reduction of $\phi$ modulo $d$ and
\[
1 \lra \bbZ / d \lra \Gam_d \lra \Gam \lra 1
\]
be the associated central exact sequence given by reducing the kernel $\langle \sig \rangle$ of $\wt{\Gam} \to \Gam$ modulo $d$. Suppose that there is a nilpotent quotient $\calN$ of $\wt{\Gam}$ that is injective on $\langle \sig \rangle$. Then $\Gam_d$ is residually finite for all $d \in \bbN$.
\end{lem}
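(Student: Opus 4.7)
The plan is to reduce this to Lemma~\ref{lem:RFJustCenter} applied to the extension $1 \to \bbZ/d \to \Gam_d \to \Gam \to 1$. Since $\Gam$ is residually finite by hypothesis, it suffices to construct a homomorphism $\psi : \Gam_d \to H$ with $H$ residually finite and $\psi$ injective on the central $\bbZ/d$. Equivalently, because $\Gam_d = \wt{\Gam}/\langle \sig^d\rangle$, it suffices to produce a homomorphism $\psi : \wt{\Gam} \to H$ satisfying $\psi(\sig^d) = 1$ and $\psi(\sig^j) \ne 1$ for $1 \le j \le d-1$, with $H$ residually finite.

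The idea is to manufacture $H$ as a further quotient of the given nilpotent quotient $\calN$. Replacing $\calN$ with the image of the given map $\rho : \wt{\Gam} \to \calN$, I can assume $\calN$ is generated by $\rho(\wt{\Gam})$; then since $\sig$ is central in $\wt{\Gam}$, the element $\rho(\sig)$ is central in $\calN$, so $\langle \rho(\sig)^d \rangle$ is a normal subgroup. Set $H := \calN / \langle \rho(\sig)^d \rangle$ and let $\psi$ be the composition $\wt{\Gam} \to \calN \to H$. Then $H$ is a quotient of a finitely generated nilpotent group, hence is itself finitely generated and nilpotent, and is therefore residually finite by the facts recalled before Corollary~\ref{cor:RFNilQuo}.

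It remains to verify that the image of $\sig$ in $H$ has order exactly $d$. Clearly $\psi(\sig)^d = 1$. Conversely, if $\psi(\sig)^j = 1$ for some $1 \le j \le d-1$, then $\rho(\sig)^j \in \langle \rho(\sig)^d \rangle$, so $\rho(\sig)^{j - dk} = 1$ for some integer $k$; because $\rho$ is injective on $\langle \sig \rangle$, this forces $j = dk$, contradicting $1 \le j \le d-1$. Hence $\psi$ descends to a map $\Gam_d \to H$ that is injective on $\bbZ/d$, and Lemma~\ref{lem:RFJustCenter} yields that $\Gam_d$ is residually finite.

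There is no real obstacle beyond keeping the bookkeeping straight: the only subtle point is ensuring the image of $\sig$ has order exactly $d$ (not a proper divisor) in the constructed nilpotent quotient, which is where injectivity of $\rho$ on $\langle\sig\rangle$ — not just nontriviality — is essential. Everything else is an assembly of Lemma~\ref{lem:RFJustCenter}, the standard residual finiteness of finitely generated nilpotent groups, and the centrality of $\sig$.
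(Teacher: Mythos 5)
Your proof is correct, but it follows a genuinely different route from the paper. The paper invokes Malcev's theorem that finitely generated nilpotent groups are subgroup separable, then uses a separability lemma to produce, for each $d$, a finite-index subgroup $\calN_d \le \calN$ with $\calN_d \cap \langle \rho(\sig)\rangle = \langle\rho(\sig)^d\rangle$, passes to the normal core, and obtains a \emph{finite} quotient $F_d$ of $\Gam_d$ in which the central $\bbZ/d$ survives, concluding via Corollary~\ref{cor:RFNilQuo}. You instead simply quotient $\calN$ (which is finitely generated nilpotent, since $\wt{\Gam}$ is finitely generated as an extension of finitely generated groups) by the central subgroup $\langle\rho(\sig)^d\rangle$; the resulting group $H$ is again finitely generated nilpotent, hence residually finite, and injectivity of $\rho$ on $\langle\sig\rangle$ guarantees the image of $\sig$ has order exactly $d$ in $H$, so Lemma~\ref{lem:RFJustCenter} applies to $\Gam_d$. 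Your argument is more elementary in that it avoids the separability machinery entirely, using only residual finiteness of finitely generated nilpotent groups (already needed elsewhere) and closure of that class under quotients; the paper's separability route buys slightly more explicit output, namely genuine finite quotients $F_d$ of $\Gam_d$ detecting the center, and showcases a technique of independent use. Your one delicate point --- that the image of $\sig$ must have order exactly $d$, not a proper divisor, which is where injectivity rather than mere nontriviality of $\rho$ on $\langle\sig\rangle$ is used --- is handled correctly.
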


\begin{pf}
Let $\sig$ be a generator for the kernel of $\wt{\Gam} \to \Gam$ and $\rho : \Gam \to \calN$ be a nilpotent quotient such that $S = \langle \rho(\sig) \rangle \cong \bbZ$. Since $\sig$ is central in $\wt{\Gam}$, $\rho(\sig)$ is central in $\calN$. In particular, $S_d = \langle \rho(\sig^d) \rangle$ is a normal subgroup of $\calN$ for every natural number $d$.

Malcev proved that finitely generated nilpotent groups are \emph{subgroup separable} \cite{MalcevLERF}, i.e., for every subgroup $H \le \calN$, there is a collection $\{\calN_i\}$ of finite index subgroups of $\calN$ so that $H = \bigcap \calN_i$. See \cite[\S 2.3]{LongReid} for more basic facts about subgroup separability. Note that generally a group is subgroup separable if this condition holds for all \emph{finitely generated} subgroups $H$, but finitely generated nilpotent groups are Noetherian, hence all subgroups are finitely generated.

By \cite[Lem.\ 4.6]{ReidProfinite}, for each $d \in \bbN$ we can find a finite index subgroup $\calN_d$ of $\calN$ so that $\calN_d \cap S = S_d$. Since $S_d$ is normal in $\calN$, we can replace $\calN_d$ by the intersection of all its conjugates in $\calN$ and assume that $\calN_d$ is normal in $\calN$. Then $F_d = \calN / \calN_d$ is a finite quotient of $\wt{\Gam}$ such that the image of $\langle \sig \rangle$ in $F_d$ is isomorphic to $S / (S \cap \calN_d) \cong \bbZ / d$. This means that $\wt{\Gam} \to F_d$ factors through the projection $\wt{\Gam} \to \Gam_d$, and moreover that $\Gam_d \to F_d$ is injective on the central $\bbZ / d$ quotient of $\langle \sig \rangle$. Corollary~\ref{cor:RFNilQuo} then implies that $\Gam_d$ is residually finite, which completes the proof of the lemma.
\end{pf}

\section{Structure of $\SU(n,1)$ and its universal cover}\label{sec:Struc}

This section gives precise coordinates, including an Iwasawa decomposition, for $\SU(n,1)$ and its universal cover. These coordinates will be used to describe a concrete procedure for lifting finitely presented subgroups of $\SU(n,1)$ to $\wt{\SU(n,1)}$. Lastly, we study a certain homogeneous space for each group that will be fundamental for this lifting procedure. We start with some preliminaries on the universal cover $\wt{\U(n)}$ of the unitary group $\U(n)$.

\subsection{Coordinates on $\wt{\U(n)}$}\label{ssec:U(n)hatCoords}

The universal cover $\wt{\U(n)}$ of $\U(n)$ is identified with $\SU(n) \times \bbR$ under the universal covering map
\[
(g, t) \mapsto e^{\pi i t} g.
\]
Composing this with the isomorphism $g \mapsto (g, \det(g)^{-1})$ between $\U(n)$ and $K = \mathrm{S}(\U(n) \times \U(1))$, we see that
\begin{align}
\SU(n) \times \bbR &\lra K \nonumber \\
(g, t) &\longmapsto \left(e^{\pi i t} g, e^{-n \pi i t} \right) \label{eq:U(n)cover}
\end{align}
is the universal cover in coordinates on $K$. Note that
\[
\left\{(e^{-\pi i t} I_n, t)~:~t \in \frac{2}{n} \bbZ\right\}
\]
is the preimage of $(I_n, 1) \in K$ in $\wt{\U(n)}$.

\subsection{Coordinates on $\SU(n,1)$}\label{ssec:SUcoords}

Consider $G = \SU(n,1)$ as the group of special unitary automorphisms of $\bbC^{n+1}$ with respect to the hermitian form with matrix
\[
h = \begin{pmatrix}
0 & 0 & 1 \\
0 & I_{n-1} & 0 \\
1 & 0 & 0
\end{pmatrix}.
\]
Fix the $h$-negative vector
\[
z_0 = \begin{pmatrix} -1 \\ 0 \\ \vdots \\ 0 \\ 1 \end{pmatrix}
\]
of $h$-norm $-2$ and let $K$ be the stabilizer in $G$ of the line spanned by $z_0$, i.e., the elements of $G$ with $z_0$ as an eigenvector. There is an isomorphism $K \cong \mathrm{S}(\U(n) \times \U(1))$ determining coordinates $(g, \xi)$ on $K$ with $g \in \U(n)$ and $\xi = \det(g)^{-1}$. In what follows, we identify $\SU(n) < K$ with $g_0 \mapsto (g_0, 1)$ and $\U(1) < K$ with $\xi \mapsto (\xi I_n, \xi^{-n})$. Then $(g, \xi) z_0 = \xi z_0$ for all $(g, \xi) \in K$, hence $\SU(n)$ is the stabilizer of $z_0$ in $K$ and $K z_0 = \U(1) z_0$.

The $h$-isotropic vector
\[
v_0 = \begin{pmatrix} 1 \\ 0 \\ \vdots \\ 0 \end{pmatrix}
\]
has stabilizer containing the subgroup $B_0 = A_+ N < G$ of upper-triangular matrices with positive real eigenvalues. Specifically,
\[
B_0 = \left\{ \begin{pmatrix} \lam & - \lam {}^t \conj{z} & -\lam |z|^2/2 + i t \\ 0 & I_{n-1} & z \\ 0 & 0 & \lam^{-1} \end{pmatrix}~:~\lam \in \bbR_+, z \in \bbC^{n-1}, t \in \bbR \right\}
\]
where $A_+$ is associated with the diagonal subgroup and $N$ the unipotent radical of $B_0$, which is isomorphic to the $(n-1)$-dimensional Heisenberg group. We now prove that $G = B_0 K$, which allows us to fix an identification of $B_0 = G/K$ with complex hyperbolic space $\bbB^n$. In fact, we show that $G = B_0 K$ gives an Iwasawa decomposition of $G$.

\begin{lem}\label{lem:Iwasawa}
With notation as above, $G = B_0 K$ gives an Iwasawa decomposition of $G$ with respect to the Cartan involution of $G$ with matrix
\[
\Theta = \begin{pmatrix} 0 & 0 & 1 \\ 0 & I_{n-1} & 0 \\ 1 & 0 & 0 \end{pmatrix}.
\]
\end{lem}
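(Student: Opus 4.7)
The plan is to verify directly that $\Theta$ induces a Cartan involution on $G$ whose fixed subgroup is $K$, exhibit a maximal abelian subspace $\fraka \subset \frakp$ whose exponential is $A_+$, and identify $N$ with the exponential of the sum of the positive restricted root spaces of $\fraka$; the identity $G = B_0 K$ will then follow from the standard Iwasawa decomposition theorem for real semisimple Lie groups.

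Since $h = \Theta$ and every $g \in G$ satisfies $g^* h g = h$, one obtains $(g^*)^{-1} = \Theta g \Theta^{-1}$, so conjugation $\theta(g) = \Theta g \Theta^{-1}$ agrees on $G$ with the map $g \mapsto (g^*)^{-1}$. Differentiating gives $d\theta(X) = -X^*$ on $\frakg = \mathfrak{su}(n,1)$, which is the standard Cartan involution of $\frakg$ (the associated bilinear form $-B(X, d\theta Y)$ is a positive multiple of $\Tr(X Y^*)$, hence positive definite). Its fixed subgroup $K_\theta$ consists of those $g \in G$ that commute with $\Theta$; such $g$ preserve each eigenspace of $\Theta$, and since the $(-1)$-eigenspace of $\Theta$ is precisely the line through $z_0$, we conclude $K_\theta = K$.

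Next, write $\frakg = \frakk \oplus \frakp$ for the $(\pm 1)$-eigenspace decomposition of $d\theta$, so that $\frakp$ is the space of traceless Hermitian matrices in $\frakg$. Take $H_0 = \mathrm{diag}(1, 0, \ldots, 0, -1) \in \frakp$; since $G$ has real rank one, $\fraka = \bbR H_0$ is a maximal abelian subspace of $\frakp$, and exponentiating recovers $A_+$ as the group of diagonal matrices $\mathrm{diag}(\lambda, I_{n-1}, \lambda^{-1})$ with $\lambda > 0$ described in the statement. Decomposing $\frakg$ under $\ad(H_0)$ yields restricted roots $\pm 2, \pm 1$, with $\frakg_{\pm 1}$ of real dimension $2(n-1)$ and $\frakg_{\pm 2}$ of real dimension $1$. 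A direct matrix computation identifies $\frakn := \frakg_1 \oplus \frakg_2$ with the space of strictly upper-triangular nilpotent matrices of the shape appearing in the definition of $B_0$ (with $\lambda = 1$), so that $\exp(\frakn) = N$.

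With these identifications in place, the general Iwasawa decomposition theorem yields $G = KAN$ with unique factorization in that order; since $A$ normalizes $N$, this rearranges set-theoretically to $G = ANK = B_0 K$. There is no conceptual obstacle here: the only content is the bookkeeping needed to match the concrete matrix parametrization of $B_0$ in the statement with the abstract product of the maximal $\bbR$-split torus and the sum of positive restricted root spaces. Once $H_0$ is fixed, computing $\ad(H_0)$ on elementary matrices makes the root space decomposition explicit and produces the claimed description of $N$, completing the proof.
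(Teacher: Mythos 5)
Your proof is correct and follows essentially the same route as the paper: verify that conjugation by $\Theta$ is a Cartan involution whose fixed subgroup is $K$ (via the eigenspace decomposition of $\Theta$ and the line through $z_0$), exhibit $\fraka$ with $\exp(\fraka)$ giving $A_+$ and $N$ as the group of the positive restricted root spaces, and invoke the general Iwasawa decomposition theorem. The only cosmetic differences are that you certify the Cartan property via $\theta(g) = (g^*)^{-1}$ and positive definiteness of $-B(X,d\theta Y)$ while the paper argues via the $+1$-eigenspace of $\ad(\Theta)$ being $\frakk$, and you should state the easy reverse inclusion $K \subseteq K_\theta$ explicitly (elements of $K$ preserve the line through $z_0$ and hence its $h$-orthogonal complement, so they commute with $\Theta$).
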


\begin{pf}
First, note that $\Theta = h$, hence
\[
{}^t \conj{\Theta} h \Theta = h^3 = h,
\]
and $\det(\Theta) = -1$, so $\Theta \in \U(2,1)$. If $\frakg$ denotes the Lie algebra of $G$, we claim that $\ad(\Theta) \in \End(\frakg)$ is a Cartan involution with the Lie algebra $\fraka$ of $A_+$ as a Cartan subalgebra. Since $\Theta^2 = I_{n+1}$, $\Theta$ normalizes $A_+$, and $\fraka$ is a maximal $\Ad$-semisimple abelian subalgebra of $\frakg$, to show that $\Theta$ is a Cartan involution fixing the Cartan subalgebra $\fraka$, it suffices to show that the $+1$ eigenspace for $\ad(\Theta)$ is precisely the Lie algebra $\frakk$ of $K$.

First, note that $\Theta$ is an element of the natural $\U(2) \times \U(1) < \U(2,1)$ containing $K$. Indeed, $\Theta z_0 = -z_0$. Moreover, $\Theta$ acts trivially on the $h$-orthogonal complement to $z_0$, which is spanned by $e_1 + e_{n+1}$ and $e_2, \dots, e_n$, where $e_i$ is the $i^{th}$ standard basis vector. With respect to coordinates on $\U(2) \times \U(1)$ analogous to our coordinates on $K$, this means that
\[
\Theta = (I_n, -1).
\]
It is clear from these coordinates that conjugation by $\Theta$ acts trivially on $K$, hence $\frakk$ is contained in the $+1$ eigenspace of $\ad(\Theta)$. Conversely, if $\Theta$ commutes with $g \in G$, then $g$ preserves the $-1$ eigenspace of $\Theta$ acting on $\bbC^{n+1}$, which is the line spanned by $z_0$. Thus $z_0$ is an eigenvector of $g$, and so $g \in K$. This proves that $\ad(\Theta)$ is a Cartan involution of $\frakg$ with $+1$ eigenspace $\frakk$.

Taking the root space decomposition of $\frakg$ with respect to $\fraka$, we can choose positive roots so that the associated connected unipotent subgroup of $G$ is $N$. Following \cite[\S VI.3]{Helgason}, we see that $B_0 K = A_+ N K$ is an Iwasawa decomposition of $G$. This completes the proof.
\end{pf}

\subsection{Coordinates on $\wt{\SU(n,1)}$}\label{ssec:SUhat}

We now want to study the universal cover $\pi : \wt{G} \to G$, which fits into a central extension:
\[
1 \lra \bbZ \lra \wt{G} \overset{\pi}{\lra} G \lra 1
\]
where we identify $\bbZ$ with $\pi_1(G)$. First, we make the following observation, which follows immediately from the identification of the center of $G$ with $\bbZ / (n+1)$.

\begin{lem}\label{lem:GhatCenter}
If $Z(\wt{G})$ denotes the center of $\wt{G}$, then $Z(\wt{G}) \cong \bbZ$, $\pi_1(G)$ is naturally identified with $(n+1) Z(\wt{G})$, and $Z(\wt{G})$ is naturally $\pi_1(\PU(n,1))$.
\end{lem}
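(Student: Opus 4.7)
The plan is to first identify $Z(\wt{G})$ with $\pi_1(\PU(n,1))$ via the universal cover, and then compute this fundamental group using the maximal compact subgroup.

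Since $\SU(n,1)$ and $\PU(n,1)$ share a Lie algebra and both are connected, they have a common universal cover, namely $\wt{G}$. The covering homomorphism $\wt{G} \to \PU(n,1)$ has discrete kernel $\pi_1(\PU(n,1))$, and a discrete normal subgroup of a connected Lie group is central; so $\pi_1(\PU(n,1)) \subseteq Z(\wt{G})$. Conversely, $\PU(n,1)$ is adjoint and hence has trivial center, so every central element of $\wt{G}$ must lie in this kernel. This proves the third claim, $Z(\wt{G}) = \pi_1(\PU(n,1))$.

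To compute $\pi_1(\PU(n,1))$, I would pass to a maximal compact subgroup. By \S\ref{ssec:SUcoords} the maximal compact of $G$ is $K \cong \U(n)$, and inside this copy of $\U(n)$ the center $Z(G)$ becomes the cyclic group $C = \{\zeta I_n : \zeta^{n+1} = 1\}$ of order $n+1$; thus $\PU(n,1)$ deformation retracts onto $\U(n)/C$. The homotopy long exact sequence of the fibration $C \to \U(n) \to \U(n)/C$ then yields
\[
0 \lra \pi_1(\U(n)) \lra \pi_1(\U(n)/C) \lra C \lra 0,
\]
which under the identifications above is precisely $0 \to \pi_1(G) \to Z(\wt{G}) \to Z(G) \to 1$.

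To pin down this extension I would use the determinant map, which induces $\pi_1(\U(n)) \cong \bbZ$ with a generator $\delta$. Let $[\eta]$ be the class in $\pi_1(\U(n)/C)$ of the loop obtained by descending the path $t \mapsto e^{2\pi i t/(n+1)} I_n$, $t \in [0,1]$, which connects $I$ to the generator $e^{2\pi i/(n+1)} I_n$ of $C$. Concatenating $n+1$ copies of this loop lifts to $s \mapsto e^{2\pi i s} I_n$ in $\U(n)$, a loop whose determinant winds $n$ times, and hence $(n+1)[\eta] = n[\delta]$ in $\pi_1(\U(n)/C)$. Since $\gcd(n, n+1) = 1$, the element $[\delta] - [\eta]$ satisfies $(n+1)([\delta] - [\eta]) = [\delta]$ and therefore generates $\pi_1(\U(n)/C) \cong \bbZ$, with $\pi_1(G) = \pi_1(\U(n))$ sitting as the index-$(n+1)$ subgroup $(n+1)Z(\wt{G})$. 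No deep obstacle is present: the only care required is bookkeeping of the various identifications of fundamental groups with $\bbZ$, with the coprimality of $n$ and $n+1$ being invoked at the final step.
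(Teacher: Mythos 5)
Your proof is correct. The paper in fact gives no argument for this lemma---it is stated as an observation ``which follows immediately from the identification of the center of $G$ with $\bbZ/(n+1)$''---and your write-up is precisely the standard verification being alluded to: $Z(\wt{G})=\ker(\wt{G}\to\PU(n,1))=\pi_1(\PU(n,1))$ since $\PU(n,1)$ is adjoint, then a retraction to the maximal compact and the covering $\U(n)\to\U(n)/C$ with $C=\{\zeta I_n:\zeta^{n+1}=1\}$ (which is indeed where $Z(G)$ lands under the paper's identification $g\mapsto(g,\det(g)^{-1})$), and finally the winding-number computation $(n+1)[\eta]=n[\delta]$. One step you state a bit tersely is ``$(n+1)([\delta]-[\eta])=[\delta]$ and therefore generates'': this does follow, since $\pi_1$ of a Lie group is abelian and any $x$ in an abelian extension of $\bbZ/(n+1)$ by $\langle[\delta]\rangle\cong\bbZ$ with $(n+1)x=[\delta]$ must map to an element of order exactly $n+1$ in the quotient (if $dx\in\langle[\delta]\rangle$ with $d\mid n+1$, say $dx=m[\delta]$, then $(n+1)m=d$ forces $d=n+1$), so $\langle x\rangle$ contains $\langle[\delta]\rangle$ and surjects onto the quotient; spelling this out, or noting that the boundary map sends $[\eta]$ to the generator $\zeta I_n$ of $C$, would make the final step airtight.
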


We now lift our Iwasawa decomposition of $G$ to one for $\wt{G}$.

\begin{lem}\label{lem:LiftIwasawa}
With notation as above, let $\wt{K}$ be the universal cover of $K$. Then we have embeddings of universal covers $\wt{B}_0 = B_0, \wt{K} \to \wt{G}$ so that $B_0 \to \wt{G}$ lifts $B_0 < G$, $\pi|_{\wt{K}}$ is the universal cover $\wt{K} \to K$, and $B_0 \wt{K}$ is an Iwasawa decomposition of $\wt{G}$.
\end{lem}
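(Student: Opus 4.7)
The plan is to build the lifts of $B_0$ and $K$ separately and then observe that the Iwasawa product decomposition lifts tautologically.

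First I would handle $B_0$. From the explicit matrix description in \S\ref{ssec:SUcoords}, $B_0$ is diffeomorphic as a manifold to $\bbR_+ \times \bbC^{n-1} \times \bbR$ via the parameters $(\lam, z, t)$, hence is contractible and in particular simply connected. A standard covering-space fact then produces a unique lift of the inclusion $B_0 \hookrightarrow G$ to a group homomorphism $\wt{B}_0 = B_0 \hookrightarrow \wt{G}$ sending the identity to the identity; concretely, the identity component of $\pi^{-1}(B_0)$ is the image of this lift, and the restriction of $\pi$ to it is a diffeomorphism onto $B_0$. This gives the first embedding.

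Next I would treat $\wt{K}$. Because $K$ is maximal compact in $G$, the inclusion $K \hookrightarrow G$ is a homotopy equivalence (with homotopy inverse given by the Iwasawa retraction $G = B_0 K \to K$, since $B_0$ is contractible); therefore the induced map $\pi_1(K) \to \pi_1(G)$ is an isomorphism. It follows that $\pi^{-1}(K) \subset \wt{G}$ is connected and that the restriction $\pi : \pi^{-1}(K) \to K$ is a universal covering. Hence $\pi^{-1}(K)$ is naturally identified with $\wt{K}$, providing the second embedding with the required property that $\pi|_{\wt{K}}$ is the universal covering of $K$.

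Finally I would deduce the decomposition. Using the two embeddings constructed above, define
\[
\mu : B_0 \times \wt{K} \lra \wt{G}, \qquad (b, \wt{k}) \longmapsto b \cdot \wt{k}.
\]
Composing with $\pi$ yields $(b, \wt{k}) \mapsto b \cdot \pi(\wt{k})$, which factors through the diffeomorphism $B_0 \times K \to G$ of Lemma~\ref{lem:Iwasawa} precomposed with $(\id, \pi|_{\wt{K}})$; in particular $\pi \circ \mu$ is a covering map. Since $B_0 \times \wt{K}$ is simply connected (both factors are) and $\wt{G}$ is the universal cover of $G$, lifting the covering $\pi\circ\mu$ through $\pi$ forces $\mu$ to be a homeomorphism onto $\wt{G}$, and by smoothness a diffeomorphism. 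Thus $\wt{G} = B_0 \wt{K}$ as required.

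The only subtlety, and the step I would be most careful with, is the second one: one must verify that $\pi_1(K) \to \pi_1(G)$ is an isomorphism so that $\pi^{-1}(K)$ is actually the universal cover of $K$ rather than some intermediate cover. Everything else is a routine consequence of $B_0$ being simply connected combined with Lemma~\ref{lem:Iwasawa}.
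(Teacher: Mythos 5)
Your argument is correct, but it takes a different route from the paper. The paper does not build the two subgroups out of preimages; instead it invokes the general Lie-theoretic Iwasawa decomposition for $\wt{G}$ itself (the theorem from Helgason used in Lemma~\ref{lem:Iwasawa} applies to the connected group $\wt{G}$ even though its center is infinite), obtaining $\wt{A}_+\wt{N} = \exp_{\wt{G}}(\fraka\frakn)$ and $\wt{K} = \exp_{\wt{G}}(\frakk)$ directly from the Cartan involution $\Theta$; it then identifies $\exp_{\wt{G}}(\fraka\frakn)$ with $B_0$ using that $\exp_G$ is a diffeomorphism of $\fraka\frakn$ onto the contractible group $B_0$, and shows $\pi|_{\wt{K}}$ is the universal cover of $K$ via the $\pi$-equivariant diffeomorphisms $B_0\bs\wt{G}\to\wt{K}$ and $B_0\bs G\to K$, which force $\wt{K}$ to be connected and simply connected. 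You instead take the downstairs decomposition $G=B_0K$ of Lemma~\ref{lem:Iwasawa} as the only Lie-theoretic input and do everything else with covering-space theory: the lifted $B_0$ is the identity component of $\pi^{-1}(B_0)$, $\wt{K}$ is $\pi^{-1}(K)$ (connected and simply connected because $\pi_1(K)\to\pi_1(G)$ is an isomorphism, which you correctly flag as the one point needing care), and the product map $B_0\times\wt{K}\to\wt{G}$ is a diffeomorphism by uniqueness of lifts of the simply connected covering $\pi\circ\mu$ of $G$. What your route buys is elementarity: you never need to know that the Iwasawa theorem holds for groups with infinite center, and the decomposition of $\wt{G}$ is deduced rather than quoted. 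What the paper's route buys is a direct identification of the factors with $\exp_{\wt{G}}(\fraka\frakn)$ and $\exp_{\wt{G}}(\frakk)$, staying inside the Cartan-involution framework set up in Lemma~\ref{lem:Iwasawa}. If you want your version to literally produce an ``Iwasawa decomposition'' in that structural sense, add the one-line observation that your lifted $B_0$ and your $\pi^{-1}(K)$ are the connected subgroups of $\wt{G}$ with Lie algebras $\fraka\oplus\frakn$ and $\frakk$, respectively (for the latter this uses the connectedness you proved), so they coincide with the groups the paper calls $\wt{A}_+\wt{N}$ and $\wt{K}$; everything downstream in \S\ref{ssec:HomogSp} then goes through unchanged.
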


\begin{pf}
This follows from the fact that $\pi$ induces an isomorphism between the Lie algebras of $\wt{G}$ and $G$. Continuing with the notation from the proof of Lemma~\ref{lem:Iwasawa}, $\Theta$ determines an Iwasawa decomposition of $\wt{G}$ with
\[
\wt{A}_+ \wt{N} = \exp_{\wt{G}}(\fraka \frakn),
\]
where $\frakn \subset \frakg$ is the subalgebra associated with $N$. Since $B_0$ is contractible and $\exp_G : \fraka \frakn \to B_0$ is a diffeomorphism, $\exp_{\wt{G}}(\fraka \frakn) \cong B_0$ is a lift of $B_0 < G$.

Set $\wt{K} = \exp_{\wt{G}}(\frakk)$, so we have an Iwasawa decomposition $B_0 \wt{K}$ of $\wt{G}$. It remains to show that $\wt{K}$ is the universal cover of $K$ with universal covering map $\pi$. Since $B_0$ is contractible, we have $\pi$-equivariant diffeomorphisms $B_0 \bs \wt{G} \to \wt{K}$ and $B_0 \bs G \to K$ inducing homotopy equivalences of $\wt{G}$ with $\wt{K}$ and $G$ with $K$. Since $\wt{K}$ is then connected and simply connected, $\pi|_{\wt{K}}$ must be the universal cover. This proves the lemma.
\end{pf}

Applying the identification of $\wt{K}$ with $\SU(n) \times \bbR$, we have the following.

\begin{cor}\label{cor:HatIwasawa}
There is an Iwasawa decomposition $\wt{G} = B_0 (\SU(n) \times \bbR)$ compatible with the Iwasawa decomposition $G = B_0 K$ and the universal cover $\SU(n) \times \bbR \to K$.
\end{cor}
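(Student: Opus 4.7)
The plan is to assemble the corollary directly from the two pieces already in place: Lemma~\ref{lem:LiftIwasawa} gives the abstract Iwasawa decomposition $\wt{G} = B_0 \wt{K}$, and \S\ref{ssec:U(n)hatCoords} gives the explicit identification of $\wt{\U(n)}$ with $\SU(n) \times \bbR$. The corollary is then just a matter of transporting the second identification through the isomorphism $K \cong \U(n)$ fixed in \S\ref{ssec:SUcoords} and observing that it is compatible with the Iwasawa factorization.

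First I would invoke Lemma~\ref{lem:LiftIwasawa} to obtain the Iwasawa decomposition $\wt{G} = B_0 \wt{K}$, where $\wt{K} = \exp_{\wt{G}}(\frakk)$ lifts $K$ and the restriction $\pi|_{\wt{K}}$ is the universal covering map $\wt{K} \to K$. Since $K = \mathrm{S}(\U(n) \times \U(1))$ is isomorphic to $\U(n)$ via the coordinates $(g, \xi) \leftrightarrow g$ (with $\xi = \det(g)^{-1}$) fixed in \S\ref{ssec:SUcoords}, its universal cover $\wt{K}$ is canonically identified with $\wt{\U(n)}$. By the discussion in \S\ref{ssec:U(n)hatCoords}, the universal cover $\wt{\U(n)}$ is in turn identified with $\SU(n) \times \bbR$ via $(g,t) \mapsto e^{\pi i t}g$, and under the isomorphism $\U(n) \cong K$ this becomes the map \eqref{eq:U(n)cover}, namely $(g,t) \mapsto (e^{\pi i t} g, e^{-n\pi i t})$, which is therefore the universal cover $\SU(n) \times \bbR \to K$.

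Composing these identifications yields $\wt{K} \cong \SU(n) \times \bbR$ as Lie groups in such a way that the covering map $\pi|_{\wt{K}}$ agrees with the explicit universal cover $\SU(n) \times \bbR \to K$ of \S\ref{ssec:U(n)hatCoords}. Substituting this identification into $\wt{G} = B_0 \wt{K}$ yields the desired Iwasawa decomposition $\wt{G} = B_0(\SU(n) \times \bbR)$, and by construction it is compatible with $G = B_0 K$ in the sense that the projection $\wt{G} \to G$ restricts to the identity on the $B_0$ factor and to the map \eqref{eq:U(n)cover} on the $\SU(n) \times \bbR$ factor.

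There is no real obstacle here: the corollary is genuinely a bookkeeping statement that names the coordinates on $\wt{G}$ which will be used in \S\ref{sec:Lift}. The only thing to be a little careful about is to note that the inclusion $\SU(n) \hookrightarrow \wt{K}$ coming from $\SU(n) \times \bbR$ agrees with $\exp_{\wt{G}}$ of the $\fraks\fraku(n)$-part of $\frakk$, so that the product decomposition $\wt{K} = \SU(n) \times \bbR$ is honestly realized inside $\wt{G}$ and not only up to the covering map. This follows because $\SU(n)$ is simply connected, so its inclusion into $K$ lifts uniquely to an inclusion into $\wt{K}$, and this lifted $\SU(n)$ together with the one-parameter subgroup $\exp_{\wt{G}}(\bbR \cdot i I_{n+1})$ (where $i I_{n+1}$ spans the center of $\frakk$) generates all of $\wt{K}$ with the desired product structure.
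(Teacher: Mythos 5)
Your argument is correct and is essentially the paper's own: the corollary is obtained exactly by combining Lemma~\ref{lem:LiftIwasawa} with the identification $\wt{K}\cong\wt{\U(n)}\cong\SU(n)\times\bbR$ coming from \S\ref{ssec:U(n)hatCoords} and the coordinates on $K$ fixed in \S\ref{ssec:SUcoords}. One small slip in your final remark: $iI_{n+1}$ is not traceless, hence does not lie in $\fraks\fraku(n,1)$; the center of $\frakk$ is spanned instead by the traceless element acting as $i$ on the $h$-orthogonal complement of $z_0$ and as $-ni$ on the line $\bbC z_0$ (the infinitesimal generator of the $\U(1)<K$ given by $\xi\mapsto(\xi I_n,\xi^{-n})$), though this does not affect the rest of your argument.
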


\subsection{Useful homogeneous spaces}\label{ssec:HomogSp}

We now study a pair of homogeneous spaces for $G$ and $\wt{G}$ that will end up being very helpful in computing lifts of relations in $G$ up to $\wt{G}$. Specifically, the homogeneous space $G / \SU(n)$ is useful in calculating the element of $\pi_1(G) \cong \bbZ$ associated with a loop $\sig : S^1 \to G$ using only linear algebra.

\begin{lem}\label{lem:HomogSp}
Identifying $G / K$ with $\bbB^n$ induces a diffeomorphism between $G / \SU(n)$ and $\bbB^n \times \U(1)$. The map $g \SU(n) \mapsto g z_0$ moreover defines a left $G$-invariant diffeomorphism between $G / \SU(n)$ and the space of all vectors in $\bbC^{n+1}$ of $h$-norm $-2$.
\end{lem}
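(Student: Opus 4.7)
The plan is to derive both assertions from a single computation of the stabilizer of $z_0$ in $G$, combined with the Iwasawa decomposition of Lemma~\ref{lem:Iwasawa}.

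First I would compute the stabilizer. Any $g \in G$ fixing $z_0$ in particular stabilizes the line $\bbC z_0$ and hence lies in $K$, and the explicit formula $(g_0, \xi) z_0 = \xi z_0$ recorded in \S\ref{ssec:SUcoords} identifies the stabilizer with $\{(g_0, 1) : g_0 \in \SU(n)\} \cong \SU(n)$. Consequently, the orbit map $g \SU(n) \mapsto g z_0$ is a well-defined, smooth, injective, left $G$-equivariant map from $G / \SU(n)$ into the real hypersurface $\calH = \{v \in \bbC^{n+1} : h(v, v) = -2\}$.

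Next I would upgrade this orbit map to a diffeomorphism onto $\calH$, giving the second assertion. Both $G / \SU(n)$ and $\calH$ have real dimension $2n+1$: the former from the Lie-theoretic count $\dim G - \dim \SU(n) = (n+1)^2 - 1 - (n^2 - 1)$, and the latter because $\calH$ is cut out in $\bbC^{n+1}$ by a smooth real equation with nowhere vanishing differential on $\calH$. The orbit--stabilizer theorem says that the orbit map is a smooth immersion, and matching dimensions together with injectivity promote it to an open embedding, so its image is open in $\calH$. Since the complement in $\calH$ is also a union of $G$-orbits, hence open as well, the image is a union of connected components of $\calH$; it therefore suffices to check that $\calH$ is connected. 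Diagonalizing $h$ to $\mathrm{diag}(1, \dots, 1, -1)$ presents $\calH$ as $\{(w, z) \in \bbC^n \times \bbC : |z|^2 = 2 + |w|^2\}$, which is diffeomorphic to $\bbC^n \times S^1$ via $(w, z) \mapsto (w, z/|z|)$ and thus connected.

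For the first assertion I would invoke the Iwasawa decomposition $G = B_0 K$ of Lemma~\ref{lem:Iwasawa}, which realizes multiplication as a diffeomorphism $B_0 \times K \to G$. Quotienting on the right by $\SU(n) \le K$ yields a diffeomorphism $B_0 \times (K / \SU(n)) \to G / \SU(n)$, and the formula $(g_0, \xi) z_0 = \xi z_0$ shows once more that $K / \SU(n) \cong \U(1)$ via the character $\xi$. Combined with the previously fixed identification $B_0 = G/K \cong \bbB^n$, this gives $G / \SU(n) \cong \bbB^n \times \U(1)$. Compatibility of the two descriptions is automatic, since both are ultimately built from the orbit map through $z_0$. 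The step requiring the most care is the dimension and connectedness check for $\calH$: nothing is deep, but one must verify it by hand to avoid the alternative of invoking Witt's extension theorem for $h$.
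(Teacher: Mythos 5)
Your overall route is the paper's: the first claim is obtained exactly as in the paper from the Iwasawa decomposition $G = B_0K$ of Lemma~\ref{lem:Iwasawa} together with the character $(g_0,\xi)\mapsto\xi$ identifying $K/\SU(n)$ with $\U(1)$, and the second claim is the orbit--stabilizer theorem once one knows that $\SU(n)$ is the stabilizer of $z_0$ and that the orbit $Gz_0$ is all of $\calH=\{v\in\bbC^{n+1} : h(v,v)=-2\}$. Where you diverge is that the paper simply asserts transitivity of $G$ on $\calH$ (a standard Witt-type fact), while you try to deduce it from openness of orbits plus connectedness of $\calH$; your dimension count and your connectedness argument for $\calH$ are both correct.

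The flaw is the sentence ``the complement in $\calH$ is also a union of $G$-orbits, hence open as well.'' That inference is a non sequitur: a union of orbits need not be open, because other orbits could have smaller dimension (compare $\GL_n(\bbR)$ acting on $\bbR^n$, where the complement of the open orbit is the single non-open orbit $\{0\}$). What you actually need is that \emph{every} orbit in $\calH$ is open, and your stabilizer computation only covers $z_0$, since it used the chosen coordinates on $K$. The repair is short: for any $v$ with $h(v,v)=-2$, the decomposition $\bbC^{n+1}=\bbC v\oplus v^{\perp}$ with $h|_{v^{\perp}}$ positive definite shows that the stabilizer of $v$ in $G$ acts trivially on $\bbC v$ and by determinant-one unitaries on $v^{\perp}$, hence is isomorphic to $\SU(n)$; so every orbit has dimension $2n+1$, is open, and connectedness then forces a single orbit. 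Alternatively, prove transitivity directly, which is what the paper's one-line appeal amounts to: $G$ acts transitively on negative lines (this is $G/K=\bbB^n$), and the $\U(1)$ factor of $K$ acts on the line $\bbC z_0$ by arbitrary unit scalars, so every vector of $h$-norm $-2$ lies in $Gz_0$. With either patch your argument is complete and matches the lemma as stated.
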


\begin{pf}
Identifying $G / K$ with $\bbB^n$, the Iwasawa decomposition $G = B_0 K$ identifies $B_0$ with $\bbB^n$. Then $k = (h, \xi) \mapsto \xi$ identifies $K / \SU(n)$ with $\U(1)$, hence mapping $b (h, \xi) \in B_0 K$ to $(b, \xi)$ defines a diffeomorphism between $G / \SU(n)$ and $\bbB^n \times \U(1)$. This proves the first claim. The second claim is the orbit-stabilizer theorem, since $\SU(n) < K$ is the stabilizer of $z_0$ and $G$ acts transitively on vectors of $h$-norm $-2$.
\end{pf}

Corollary~\ref{cor:HatIwasawa} then gives a diagram of universal covers
\[
\begin{tikzcd}
\wt{G} / \SU(n) \arrow{r}{\simeq} \arrow{d} & \bbB^n \times \bbR \arrow{d} \\
G / \SU(n) \arrow{r}{\simeq} & \bbB^n \times \U(1)
\end{tikzcd}
\]
and the long exact sequence of homotopy groups also implies that the fibration
\[
\begin{tikzcd}
G / \SU(n) \arrow{r} & \bbB^n \times \U(1) \arrow{d} \\ & \U(1)
\end{tikzcd}
\]
induces an isomorphism on the level of fundamental group. We can then compute elements of $\pi_1(G)$ as follows.

\begin{lem}\label{lem:Computepi1}
Given a loop $\sig : [0,1] \to G$ based at the identity of $G$, we obtain a loop $\wh{\sig}$ based at $1 \in \bbC$ by
\[
\begin{tikzcd}
{[0,1]} \arrow{r}{\sig} \arrow[swap]{drr}{\wh{\sig}} & G \arrow{r} & G z_0 \arrow{d}{\pi_{n+1}} \\
 & &  \bbC
\end{tikzcd}
\]
where $\pi_{n+1}$ is projection of $\bbC^{n+1} \supset G z_0$ onto the $(n+1)^{st}$ coordinate. Then $\wh{\sig}([0,1]) \subset \bbC \ssm \{0\}$. Identifying $\pi_1(G)$ with $\bbZ$, the element $[\sig] \in \pi_1(G)$ associated with $\sig$ is equal to the winding number of $\wh{\sig}$ around $0 \in \bbC$.
\end{lem}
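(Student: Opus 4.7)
The plan is to track the loop $\sigma$ through the Iwasawa decomposition and the diffeomorphism $G/\SU(n) \cong \bbB^n \times \U(1)$ from Lemma~\ref{lem:HomogSp}, then observe that the $\U(1)$-coordinate and the $(n+1)^{\mathrm{st}}$ coordinate of $g z_0$ differ only by a positive real factor.

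First I would use the paragraph preceding the lemma: the fibration $\bbB^n \times \U(1) \to \U(1)$ induces an isomorphism on $\pi_1$, and combined with the fact that $\SU(n)$ is simply connected (so that $G \to G/\SU(n)$ induces an isomorphism on $\pi_1$), the class $[\sigma] \in \pi_1(G) \cong \bbZ$ is computed by projecting $\sigma$ to its $\U(1)$-component under $G = B_0 K \to K/\SU(n) = \U(1)$ and taking the winding number around $0$. Concretely, write $\sigma(s) = b(s) k(s)$ with $b(s) \in B_0$ and $k(s) = (h(s), \xi(s)) \in K = \mathrm{S}(\U(n) \times \U(1))$; then $[\sigma]$ is the winding number of the loop $\xi : [0,1] \to \U(1) \subset \bbC^\times$ around the origin.

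Next I would compute $\sigma(s) z_0$ explicitly. Since $(h,\xi) z_0 = \xi z_0$ for all $(h,\xi) \in K$ (established in \S\ref{ssec:SUcoords}), we have $\sigma(s) z_0 = \xi(s)\, b(s) z_0$. Using the explicit form of $B_0$, if
\[
b(s) = \begin{pmatrix} \lam(s) & -\lam(s)\,{}^t\conj{z(s)} & -\lam(s) |z(s)|^2/2 + i t(s) \\ 0 & I_{n-1} & z(s) \\ 0 & 0 & \lam(s)^{-1} \end{pmatrix},
\]
then the $(n+1)^{\mathrm{st}}$ coordinate of $b(s) z_0$ is $\lam(s)^{-1} \in \bbR_+$. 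Therefore
\[
\wh{\sigma}(s) = \pi_{n+1}(\sigma(s) z_0) = \lam(s)^{-1}\, \xi(s),
\]
which lies in $\bbC \ssm \{0\}$ because $\lam(s) > 0$ and $|\xi(s)| = 1$. Since $\lam(s)^{-1}$ is a positive real-valued function, multiplying $\xi(s)$ by it does not change the winding number around $0$.

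Combining the two computations, the winding number of $\wh{\sigma}$ around $0 \in \bbC$ equals the winding number of $\xi$ around $0$, which equals $[\sigma] \in \pi_1(G) \cong \bbZ$. The only subtle point, which I would flag at the outset, is to verify once and for all that the identification $\pi_1(G) \cong \pi_1(\U(1)) \cong \bbZ$ used here is the one fixed in Lemma~\ref{lem:GhatCenter} and Corollary~\ref{cor:HatIwasawa}; but this is immediate from the commutative square of universal covers appearing just before the lemma, so no additional work is required.
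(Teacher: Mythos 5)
Your proposal is correct and follows essentially the same route as the paper: both use the Iwasawa decomposition to compute $\pi_{n+1}(\sig(s)z_0) = \lam(s)^{-1}\xi(s)$ and then identify $[\sig]$ with the winding number via the $\pi_1$-isomorphisms coming from $G/\SU(n) \cong \bbB^n \times \U(1)$. The only cosmetic difference is that you first reduce $[\sig]$ to the winding number of the $\U(1)$-coordinate $\xi(s)$ and then observe that the positive factor $\lam(s)^{-1}$ does not affect it, while the paper packages the same facts as the statement that the composite $G \to Gz_0 \to \bbC \ssm \{0\}$ induces an isomorphism on fundamental groups.
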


\begin{pf}
For $s \in [0,1]$, let
\[
\sig(s) = b_s (g_s, \xi_s) \in B_0 K
\]
be the Iwasawa decomposition of $\sig(s)$. We then see that
\[
\sig(s) z_0 = \begin{pmatrix} \textasteriskcentered \\ \vdots \\ \textasteriskcentered \\ \lam_s^{-1} \xi_s \end{pmatrix},
\]
where $\lam_s^{\pm 1}$ are the diagonal entries of $b_s$ with respect to the coordinates on $B_0$ defined in \S\ref{ssec:SUcoords}, hence
\[
\pi_{n+1}(\sig(s) z_0) = \lam_s^{-1} \xi_s \in \bbC \ssm \{0\}.
\]
Since $\sig$ is based at the identity of $G$, $\lam_0 = \xi_0 = 1$, so $\wt{\sig}$ is based at $1 \in \bbC$. This proves that $\wh{\sig}([0,1])$ is a loop in $\bbC \ssm \{0\}$ based at $1$.

Recall that the identification of $G z_0$ with $\bbB^n \times \U(1)$ in Lemma~\ref{lem:HomogSp} identifies the $\U(1)$ factor with $\U(1) z_0$ for $\U(1) < K$. This, with the previous calculation of $\pi_{n+1}$ on $G z_0$, implies that the map
\[
\begin{tikzcd}
G \arrow{r} \arrow[dashed]{dr} & G z_0 \arrow{d}{\pi_{n+1}} \\
& \bbC \ssm \{0\}
\end{tikzcd}
\]
induces an isomorphism $\pi_1(G) \cong \pi_1(\bbC \ssm \{0\})$. The element of $\pi_1(\bbC \ssm \{0\})$ associated with a loop is computed by the winding number around $0$, so this completes the proof of the lemma.
\end{pf}

\section{Lifting presentations}\label{sec:Lift}

In this section, fix a finitely presented group $\Gam < G = \SU(n,1)$. We assume for simplicity that $\Gam$ contains the center $Z(G) \cong \bbZ / (n+1)$ of $G$, which is always the case when $\Gam$ is the pullback to $\SU(n,1)$ of a lattice in $\PU(n,1)$, and let $\wh{z}$ be a generator for $Z(G)$. Without loss of generality, we can assume $\wh{z}$ is a generator for $\Gam$, and so $\Gam$ is given by generators and relations
\[
\Gam = \langle g_1, \dots, g_d, \wh{z}~|~\calR_1,\dots,\calR_e \rangle.
\]
We now describe a procedure for presenting the central extension
\[
1 \lra \bbZ \lra \wt{\Gam} \lra \Gam \lra 1
\]
obtained by pulling $\Gam$ back to the universal cover $\wt{G}$ of $G$. 

\medskip

\noindent
\textbf{Lifting generators}

\medskip

Since $G$ is connected, we can write $g_i = \exp_G(v_i)$ for some $v_i$ in the Lie algebra $\frakg$ of $G$. This defines an embedded path:
\begin{align*}
\gam_i : [0,1] &\lra G & \gam_i(s) &= \exp_G(s v_i)
\end{align*}
Notice that $g_i^{-1} = \exp_G(-v_i)$. Then
\begin{align*}
\wt{\gam}_i : [0,1] &\lra \wt{G} & \wt{\gam}_i(s) &= \exp_{\wt{G}}(s v_i)
\end{align*}
is an embedded path in $\wt{G}$ with $\wt{g}_i = \wt{\gam}_i(1)$ a lift of $g_i$ and $\wt{g}_i^{-1} = \exp_{\wt{G}}(-v_i)$.

Let $z$ be a generator for the center of $\wt{G}$ that projects to $\wh{z} \in G$. Then $z^{n+1}$ is a generator for $\pi_1(G)$. Our generators for $\wt{\Gam}$ are $\wt{g}_1, \dots, \wt{g}_d$ and $z$.

\medskip

\noindent
\textbf{Lifting relations}

\medskip

Now, let $\calR_i$ be a relation for $\Gam$. There is no loss of generality in considering $\calR_i$ as being of the form
\[
g_{i_f}^{\ep_{1,f}} \cdots g_{i_1}^{\ep_{i,1}} = I_{n+1} \in G
\]
for $\ep_{i,j} \in \{\pm 1\}$ and $f = f(i)$. This determines a piecewise-defined loop $\sig_i : [0,1] \to G$ based at the identity by
\[
\sig_i(s) = \begin{cases}
\gam_{i_1}(\ep_{i_1} f s) & s \in [0, \frac{1}{f}] \\
\gam_{i_j}(\ep_{i,j} (f s - j + 1)) g_{i_{j-1}}^{\ep_{i,j-1}} \cdots g_{i_1}^{\ep_{i,1}} & s \in [\frac{j-1}{f}, \frac{j}{f}] \\
\gam_{i_f}(\ep_{i,f} (f s - f + 1)) g_{i_{f-1}}^{\ep_{i,f-1}} \cdots g_{i_1}^{\ep_{i,1}} & s \in [\frac{f-1}{f}, 1]
\end{cases}
\]
where $1 \le j \le f-1$. Using Lemma~\ref{lem:Computepi1}, we can compute the element $r_i = [\sig_i] \in \bbZ \cong \pi_1(G)$ defined by $\sig_i$ using the winding number of the loop $\pi_{n+1}(\sig_i(s) z_0)$ around $0$ in $\bbC \ssm \{0\}$. Notice that $\sig_1(s) \in \SL_{n+1}(\bbC)$, so the path $\sig_i(s) z_0$ is explicitly computed using matrix multiplication in $\bbC^{n+1}$.

Uniqueness of path lifting implies that the lift of $\sig_i$ to $\wt{G}$ based at the identity is given by $z^{(n+1) r_i}$ for $r_i \in \bbZ$ the winding number of $\pi_{n+1} \circ \sig_i$ around $0 \in \bbC$. Thus our lifted relations $\wt{\calR}_i$ for $\wt{\Gam}$ are:
\[
\wt{g}_{i_f}^{\ep_{1,f}} \cdots \wt{g}_{i_1}^{\ep_{i,1}} \wt{z}^{-(n+1) r_i} = 1_{\wt{G}}.
\]
This presents $\wt{\Gam}$ as the group with generators $\{\wt{g}_i\}$ and relations $\{\wt{\calR}_i\}$, possibly with the additional relation that $z$ is central.

\begin{lem}\label{lem:CorrectPresentation}
With notation as above,
\[
\wt{\Gam} = \langle \wt{g}_1, \dots, \wt{g}_d, z~|~\wt{\calR}_1, \dots, \wt{\calR}_e, z~\mathrm{central} \rangle.
\]
\end{lem}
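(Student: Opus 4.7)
Let $\Pi$ denote the group with the claimed presentation, and define $\phi : \Pi \to \wt{\Gam}$ on generators by sending each named generator to the element of $\wt{\Gam}$ of the same name. The plan is to verify $\phi$ is a well-defined surjection, and then deduce injectivity via the five-lemma applied to the two central extensions of $\Gam$ by $\bbZ$.

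First, for well-definedness, centrality of $z$ is automatic since $z \in Z(\wt{G})$ by construction. To see that each lifted relation $\wt{\calR}_i$ holds in $\wt{\Gam}$, I would concatenate the lifted paths $\wt{\gam}_{i_j}^{\ep_{i,j}}$ in the order dictated by $\calR_i$ to obtain a path $\wt{\sig}_i$ in $\wt{G}$ lifting the loop $\sig_i$ in $G$. By uniqueness of path-lifting, the endpoint $\wt{g}_{i_f}^{\ep_{i,f}} \cdots \wt{g}_{i_1}^{\ep_{i,1}}$ lies in $\ker(\wt{G} \to G) = \pi_1(G)$ and represents the homotopy class $[\sig_i]$. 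By Lemma~\ref{lem:Computepi1}, this class equals $r_i$ times the generator of $\pi_1(G)$, which is $z^{n+1}$ by Lemma~\ref{lem:GhatCenter}. Hence the endpoint equals $z^{(n+1) r_i}$, which is exactly the relation $\wt{\calR}_i$.

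Surjectivity is immediate: an arbitrary element of $\wt{\Gam}$ projects to a word in the $g_i$'s and $\wh{z}$, and so differs from the corresponding word in the $\wt{g}_i$'s and $z$ by an element of $\ker(\wt{G} \to G) = \langle z^{n+1} \rangle \subset \langle z \rangle$, which lies in the image of $\phi$. For injectivity, I examine the surjection $p = (\wt{\Gam} \to \Gam) \circ \phi : \Pi \to \Gam$, which on generators sends $\wt{g}_i \mapsto g_i$ and $z \mapsto \wh{z}$. Because $\phi(z) = z$ has infinite order in $\wt{\Gam}$, $z$ has infinite order in $\Pi$. Assuming $\ker(p) = \langle z^{n+1} \rangle_\Pi$, the restriction of $\phi$ gives an isomorphism $\langle z^{n+1} \rangle_\Pi \cong \langle z^{n+1} \rangle_{\wt{\Gam}}$, and the five-lemma applied to the resulting ladder of short exact sequences yields that $\phi$ is an isomorphism.

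The main obstacle is to identify $\ker(p) = \langle z^{n+1} \rangle_\Pi$. The containment $\supseteq$ is clear since $p(z^{n+1}) = \wh{z}^{n+1} = 1$. For $\subseteq$, an element of $\ker(p)$ corresponds, under the free-group isomorphism $F(\wt{g}_i, z) \cong F(g_i, \wh{z})$, to a word in the normal closure of $\{\calR_j\}$ in $F(g_i, \wh{z})$; this normal closure contains, as consequences of $\Gam \supseteq Z(G)$, the relations $\wh{z}^{n+1} = 1$ and $[\wh{z}, g_i] = 1$. Rewriting this word inside $\Pi$ using the $\wt{\calR}_j$'s and centrality of $z$, each use of $\calR_j$ contributes a factor $z^{(n+1) r_j}$, each use of $\wh{z}^{n+1} = 1$ contributes $z^{n+1}$, and each use of $[\wh{z}, g_i] = 1$ contributes nothing; all contributions thus lie in $\langle z^{n+1} \rangle_\Pi$, proving the claim.
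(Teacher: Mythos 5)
Your proof is correct, and its overall shape is the same as the paper's: construct the homomorphism from the presented group to $\wt{\Gam}$ (your path-lifting check of the relations is exactly the discussion preceding the lemma), then compare with the central extension structure over $\Gam$. Where you differ is in the execution of the injectivity step. The paper's proof simply asserts that $\langle z \rangle$ is the kernel of the projection $L \to \Gam$ (as stated this is slightly off, since $z \mapsto \wh{z} \neq 1$; what is true, and all that is needed, is that the kernel is contained in $\langle z \rangle$, e.g.\ because adding the relation $z=1$ yields a presentation of $\Gam/\langle \wh{z} \rangle$), and then concludes because $\langle z \rangle$ maps injectively into $\wt{\Gam}$. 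You instead identify the kernel exactly as $\langle z^{n+1} \rangle$ by writing an element of $\ker(p)$ as a product of conjugates of the defining relators $\calR_j$ in the free group and using centrality of $z$ to collapse the lifted expression to a power of $z^{n+1}$, and you then finish with the five lemma applied to the ladder of $\bbZ$-central extensions of $\Gam$. The two endgames are equivalent in substance; yours has the advantage of making surjectivity explicit and of actually proving the kernel identification that the paper leaves as an assertion, at the cost of some unnecessary bookkeeping: once a kernel element is expressed as a product of conjugates of the relators $\calR_j$, centrality of $z$ alone gives the conclusion, so the separate accounting of ``uses'' of the derived relations $\wh{z}^{\,n+1}=1$ and $[\wh{z}, g_i]=1$ is harmless but not needed.
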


\begin{pf}
Let $L$ be the abstract group with the given presentation. The above shows that each relation for $L$ holds in $\wt{\Gam}$, hence we have a homomorphism $L \to \wt{\Gam}$. On the other hand, $\langle z \rangle$ is the kernel of the projection $L \to \Gam$, which factors through $\wt{\Gam}$. Thus, the kernel of $L \to \wt{\Gam}$ is contained in $\langle z \rangle \cong \bbZ$. However, $\langle z \rangle$ maps isomorphically to the center of $\wt{\Gam}$, so $L \cong \wt{\Gam}$.
\end{pf}

\section{Residual finiteness of central extensions and the cup product}\label{sec:Beauville}

Let $\Gam$ be the fundamental group of a Riemann surface $\Sig$ of genus $g \ge 2$. The fundamental group $\wt{\Gam}$ of its unit tangent bundle $T^1 \Sig$ fits into a central exact sequence
\[
1 \lra \bbZ \lra \wt{\Gam} \lra \Gam \lra 1
\]
whose Euler class $e \in H^2(\Sig, \bbZ) \cong H^2(\Gam, \bbZ)$ is the first Chern class of the canonical bundle. We can equivalently say that the Euler class of the bundle is a nonzero multiple of the K\"ahler form $\om$ on $\Sig$. Lastly, $\wt{\Gam}$ is isomorphic to the preimage of $\Gam < \PSL_2(\bbR)$ in its universal cover $\wt{\PSL}_2(\bbR)$.

It is well-known that $\wt{\Gam}$ is residually finite, and the most common proof is to show that $\wt{\Gam}$ admits a homomorphism onto an integral Heisenberg group such that a generator for the center of $\wt{\Gam}$ maps to an element of infinite order. See \cite[\S IV.48]{delaHarpe} for details. It is not hard to prove by hand using reductions modulo primes, that integral Heisenberg groups are residually finite, and recall from \S\ref{sec:RF} that in fact all finitely generated nilpotent groups are residually finite. Since $\Gam$ is linear, and hence residually finite, residual finiteness of $\wt{\Gam}$ follows from Corollary~\ref{cor:RFNilQuo}.

The purpose of this section is to present a broad generalization of this result, and its strategy of proof, based on a proposition that is perhaps best attributed to Sullivan (e.g., see \cite[Ch.\ 3, \S1]{ABCKT}, in particular Remark 3.4). In what follows it will be convenient to adopt the convention that a $k$-step nilpotent group has length \emph{at most} $k$. Our most general result is the following.

\begin{thm}\label{thm:CentralRF1}
Let $X$ be a closed aspherical manifold with fundamental group $\Gam$ and
\[
U(1) \lra Y \lra X
\]
be a principal $\U(1)$ bundle with Euler class $\om \in H^2(X, \bbZ)$ that has infinite additive order, equivalently, its image $\om\in H^2(X,\bbQ)$ is not zero. If $\wt{\Gam} = \pi_1(Y)$ and $z$ denotes a generator for $\pi_1(U(1)) < \wt{\Gam}$, then the image of $z$ in the maximal two-step nilpotent quotient of $\wt{\Gam}$ has infinite order if and only if $\om$ is in the image of the map
\[
c_\bbQ : \bigwedge\nolimits^2 H^1(X, \bbQ) \to H^2(X, \bbQ)
\]
given by evaluation of the cup product.
\end{thm}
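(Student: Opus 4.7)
The strategy is to translate the question into one about the central extension
\[
1 \lra \langle \bar z \rangle \lra N \lra \Gam_{(2)} \lra 1,
\]
where $N = \wt{\Gam}/\Gam_3(\wt{\Gam})$ is the maximal two-step nilpotent quotient, $\bar z$ is the image of $z$ in $N$, and $\Gam_{(2)} = \Gam/\Gam_3(\Gam)$. Since $X$ is aspherical and $\pi_k(S^1) = 0$ for $k \ge 2$, the long exact sequence for the fibration $S^1 \to Y \to X$ shows $Y$ is aspherical, so $Y = K(\wt{\Gam}, 1)$ and $\omega \in H^2(X, \bbZ) \cong H^2(\Gam, \bbZ)$ is precisely the class of the extension $1 \to \bbZ \to \wt{\Gam} \to \Gam \to 1$. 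The Gysin sequence combined with $\omega_\bbQ \ne 0$ gives $H^1(Y, \bbQ) \cong H^1(X, \bbQ)$, so $z$ is torsion in $\wt{\Gam}^{ab}$, and its image in $N$ lies in $[N, N]$ up to torsion. The quotient $\wt{\Gam} \to N$ factors through $\Gam \to \Gam_{(2)}$ after modding out $\langle z \rangle$, giving the extension displayed above. Thus $z$ has infinite order in $N$ if and only if $\langle \bar z \rangle \cong \bbZ$, in which case the class $\bar\omega \in H^2(\Gam_{(2)}, \bbZ)$ of this extension pulls back to $\omega$ along $\Gam \to \Gam_{(2)}$.

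For the forward direction, assume $z$ has infinite order in $N$. After tensoring with $\bbQ$ (and, if needed, replacing $\Gam_{(2)}$ by a torsion-free finite-index subgroup), I would apply Nomizu's theorem to identify $H^*(\Gam_{(2)}, \bbQ)$ with the Chevalley--Eilenberg cohomology of the rational Malcev Lie algebra $\frakg = V \oplus W$, with $V = H_1(\Gam, \bbQ)$ and $W = [\frakg, \frakg]$ central. In the complex $C^2 = \bigwedge\nolimits^2 V^* \oplus (V^* \wedge W^*) \oplus \bigwedge\nolimits^2 W^*$, all coboundaries from $C^1 = V^* \oplus W^*$ lie in $\bigwedge\nolimits^2 V^*$ (via the dual $W^* \hookrightarrow \bigwedge\nolimits^2 V^*$ of the bracket). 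A direct check using $[\frakg', [\frakg', \frakg']] = 0$ shows that the central extension $\frakg' = \frakg \oplus \bbR z$ defined by a cocycle $c \in C^2$ is two-step nilpotent if and only if $c(v, w) = 0$ for all $v \in V$, $w \in W$, i.e.\ the $V^* \wedge W^*$ and $\bigwedge\nolimits^2 W^*$ components of $c$ vanish. Since these components are cohomology invariants, the class $[\bar\omega]$ must lie in the image of $\bigwedge\nolimits^2 V^* \to H^2(\frakg, \bbQ)$, which is exactly $\im(c_{\Gam_{(2)}})$. Naturality of cup product, together with $H^1(\Gam_{(2)}, \bbQ) = H^1(\Gam, \bbQ)$, then gives $\omega \in \im(c_\bbQ)$.

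For the backward direction, assume $\omega \in \im(c_\bbQ)$. Clearing denominators, I would write $M\omega = \sum_{i=1}^k \al_i \cup \beta_i$ in $H^2(\Gam, \bbZ)$ for some positive integer $M$ and integral classes $\al_i, \beta_i \in H^1(\Gam, \bbZ) = \Hom(\Gam, \bbZ)$, and form the product $\phi = (\al_1, \beta_1, \dots, \al_k, \beta_k) : \Gam \to \bbZ^{2k}$. Let $H_{2k+1}(\bbZ)$ denote the integral Heisenberg group, realized as a central extension $1 \to \bbZ \to H_{2k+1}(\bbZ) \to \bbZ^{2k} \to 1$ with class $\eta = \sum_{i=1}^k e_{2i-1} \wedge e_{2i}$. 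By construction $\phi^*\eta = M\omega$, so the pullback $\phi^* H_{2k+1}(\bbZ)$ is isomorphic as a central extension to the pushout $M_* \wt{\Gam}$ of $\wt{\Gam}$ along multiplication-by-$M$ on $\bbZ$. Composing
\[
\wt{\Gam} \lra M_* \wt{\Gam} \cong \phi^* H_{2k+1}(\bbZ) \lra H_{2k+1}(\bbZ)
\]
yields a homomorphism to a torsion-free two-step nilpotent group sending $z$ to $M$ times the center generator, which has infinite order. Since $H_{2k+1}(\bbZ)$ is two-step nilpotent, the map factors through $N$, forcing the image of $z$ in $N$ to be of infinite order as well.

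The main obstacle is the Chevalley--Eilenberg computation underlying the forward direction: one must verify that two-step nilpotency of the extended Lie algebra corresponds precisely to the cocycle lying in $\bigwedge\nolimits^2 V^*$, and that this vanishing is genuinely a cohomology invariant rather than a cocycle-level condition. The remaining technicalities---potential torsion in $\Gam_{(2)}$, passage between integral and rational coefficients, and the comparison between group and Lie-algebra cohomology---are routine once one passes to a torsion-free finite-index subgroup and invokes Nomizu's theorem.
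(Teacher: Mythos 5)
Your proposal is correct in outline but follows a genuinely different route from the paper. The paper never argues the two implications separately: it combines the Sullivan--Beauville exact sequence (Proposition~\ref{prop:Beauville}), which identifies $\ker(c_\bbQ)$ with $\Hom(\Del/[\Del,\Gam],\bbQ)$, with the Gysin sequence for $Y \to X$ (Lemma~\ref{lem:BundleCohomology}) and a purely group-theoretic comparison of the maximal two-step nilpotent quotients of $\Gam$ and $\wt{\Gam}$ (Lemma~\ref{lem:NilQuo}), and then deduces the equivalence from a single dimension count: $i_X - i_Y = k_Y - k_X$, where the left side is $1$ exactly when $\om$ is a cup product and the right side is $1$ exactly when $z$ survives with infinite order. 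You instead prove the ``if'' direction by clearing denominators and mapping $\wt{\Gam}$ to an integral Heisenberg group via the pushout/pullback comparison of extension classes --- a direct generalization of the classical $\PSL_2(\bbR)$ argument, and arguably more constructive, since it exhibits an explicit nilpotent quotient detecting $z$ --- and the ``only if'' direction by passing to Malcev Lie algebras, invoking Nomizu, and checking at the Chevalley--Eilenberg level that a two-step nilpotent central extension of a two-step nilpotent algebra must have a cocycle supported on $\bigwedge\nolimits^2 V^*$, hence a class in the image of the cup product. The paper's route buys a uniform soft argument with no Lie theory and no integrality bookkeeping; yours buys explicitness and isolates exactly where two-step nilpotency enters.

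Three technical points need patching. First, the parenthetical ``replace $\Gam_{(2)}$ by a torsion-free finite-index subgroup'' is the wrong fix: if you restrict to a finite-index subgroup of $\Gam$ you can no longer return to $\Gam$, because corestriction does not carry decomposable classes to decomposable classes, so $\om|_\Lam \in \im(c_\bbQ^\Lam)$ does not obviously give $\om \in \im(c_\bbQ)$. The correct move is to quotient $\Gam_{(2)}$ (and $N$) by its finite normal torsion subgroup, which induces an isomorphism on rational cohomology compatible with cup products, or to work directly with the rational Malcev completion. Second, in the ``if'' direction, $\om_\bbQ = \sum a_i \cup b_i$ only yields $M\om = \sum \al_i \cup \beta_i$ in $H^2(\Gam,\bbZ)$ up to a torsion class; enlarge $M$ by the order of that torsion class before forming $\phi$ and the pushout. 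Third, the identification of the group-extension class of $N \to \Gam_{(2)}$ with the Lie-algebra extension class of its Malcev completion under the Nomizu isomorphism (as a ring isomorphism) is the load-bearing compatibility in your forward direction and should be stated and referenced, not left implicit; once it is, your cocycle computation (that any linear section of a two-step nilpotent central extension of $\frakg = V \oplus W$ yields a cocycle vanishing on $W \wedge \frakg$) does the rest, and in fact you do not need the cohomology-invariance remark for that direction.
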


\begin{rem}
The aspherical assumption on $X$ is only to conclude using the standard homotopy exact sequence that $\wt{\Gam}$ is indeed a central extension of $\Gam$ by $\bbZ$. Our methods also allow one to study the cases where any of our hypotheses on $X$ and $Y$ fail to hold, but we do not pursue these directions since this would take us too far afield from our applications to ball quotients.
\end{rem}

Before embarking on the proof, we record some corollaries. The first explains why Theorem~\ref{thm:CentralRF1} is exactly what we need in order to implement the proof of residual finiteness of $\pi_1(T^1\Sig)$ for more general circle bundles over aspherical manifolds.

\begin{cor}\label{cor:CentralRF2}
With $X$, $Y$, $\Gam$, and $\wt{\Gam}$ as in the statement of Theorem~\ref{thm:CentralRF1}, suppose that $\Gam$ is residually finite. Then residual finiteness of $\wt{\Gam}$ can be detected in a two-step nilpotent quotient of $\wt{\Gam}$ via Corollary~\ref{cor:RFNilQuo} if and only if the Euler class of $Y \to X$ is in the image of the cup product map $c_\bbQ$.
\end{cor}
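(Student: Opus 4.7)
The plan is to derive the corollary as an almost immediate consequence of Theorem~\ref{thm:CentralRF1} combined with Corollary~\ref{cor:RFNilQuo}. The first step is to unpack what ``detected in a two-step nilpotent quotient of $\wt{\Gam}$ via Corollary~\ref{cor:RFNilQuo}'' means. Since $Z = \pi_1(\U(1))$ is identified with the cyclic subgroup $\langle z \rangle \cong \bbZ$ of $\wt{\Gam}$, the hypothesis of Corollary~\ref{cor:RFNilQuo} for a two-step nilpotent quotient $\rho : \wt{\Gam} \to \calN$ becomes the concrete condition that $\rho(z)$ has infinite order in $\calN$. Thus the statement of the corollary reduces to the equivalence between the existence of such an $\calN$ and the membership $\om \in \im(c_\bbQ)$.

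For the forward implication, I would assume $\om \in \im(c_\bbQ)$ and invoke Theorem~\ref{thm:CentralRF1} to obtain a two-step nilpotent quotient of $\wt{\Gam}$ in which the image of $z$ has infinite order. Since $\Gam$ is residually finite by hypothesis, Corollary~\ref{cor:RFNilQuo} applies directly and yields residual finiteness of $\wt{\Gam}$, detected in exactly the manner the statement requires.

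For the converse, suppose there is a two-step nilpotent quotient $\rho : \wt{\Gam} \to \calN$ such that $\rho$ restricted to $\langle z \rangle$ is injective, i.e., $\rho(z)$ has infinite order. Any such quotient factors through the maximal two-step nilpotent quotient of $\wt{\Gam}$, so the image of $z$ in this maximal quotient must likewise have infinite order. Applying Theorem~\ref{thm:CentralRF1} in the other direction forces $\om$ to lie in the image of $c_\bbQ$.

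There is no substantive obstacle here: all the hard work has been localized into Theorem~\ref{thm:CentralRF1}, and the only care required is to keep straight that ``detection via Corollary~\ref{cor:RFNilQuo}'' is equivalent to the existence of a (not necessarily maximal) two-step nilpotent quotient in which $z$ has infinite order, together with the observation that such a quotient exists if and only if $z$ survives in the maximal one. I therefore expect the proof to be one short paragraph invoking each direction of Theorem~\ref{thm:CentralRF1} and citing Corollary~\ref{cor:RFNilQuo} for the conclusion of residual finiteness.
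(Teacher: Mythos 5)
Your proposal is correct and follows essentially the same route as the paper: invoke Corollary~\ref{cor:RFNilQuo} to reduce detection of residual finiteness to finding a nilpotent quotient in which $z$ has infinite order, pass to the maximal two-step nilpotent quotient, and apply both directions of Theorem~\ref{thm:CentralRF1}. Your extra remark that any two-step nilpotent quotient factors through the maximal one is exactly the (implicit) observation the paper relies on, so nothing further is needed.
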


\begin{pf}
Since $\Gam$ is a finitely generated residually finite group, Corollary~\ref{cor:RFNilQuo} says that it suffices find a nilpotent quotient of $\wt{\Gam}$ where the generator $z$ for $\pi_1(\U(1))$ maps to an element of infinite order. Taking the maximal two-step quotient of $\wt{\Gam}$, the corollary then follows from Theorem~\ref{thm:CentralRF1}.
\end{pf}

The following is then a consequence of Theorem~\ref{thm:CentralRF1} and Corollary~\ref{cor:RFNilQuo}.

\begin{cor}\label{cor:RFCentralKahler}
Let $X$ be an aspherical compact K\"ahler manifold with residually finite fundamental group for which the K\"ahler class $\om \in H^2(X)$ is in the image of the cup product map $\bigwedge\nolimits^2 H^1(X, \bbQ) \to H^2(X, \bbQ)$. If $Y$ is a principal $\U(1)$ bundle over $X$ with Euler class a nonzero multiple of $\om$, then $\pi_1(Y)$ is residually finite.
\end{cor}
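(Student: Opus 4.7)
The plan is to combine Theorem~\ref{thm:CentralRF1} with Corollary~\ref{cor:RFNilQuo}, as the sentence preceding the statement already suggests. Set $\Gam = \pi_1(X)$ and $\wt{\Gam} = \pi_1(Y)$. Since $X$ is aspherical and $Y \to X$ is a principal $\U(1)$-bundle, the homotopy long exact sequence of the fibration immediately gives a central extension
\[
1 \lra \bbZ \lra \wt{\Gam} \lra \Gam \lra 1,
\]
with a generator $z \in \wt{\Gam}$ coming from $\pi_1(\U(1))$. Since $X$ is compact, $\Gam$ and hence $\wt{\Gam}$ are finitely generated.

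The first step is to check that the Euler class $e(Y) \in H^2(X,\bbZ)$ satisfies the hypotheses of Theorem~\ref{thm:CentralRF1}. By assumption, $e(Y)$ is a nonzero rational multiple of the K\"ahler class $\om$ in $H^2(X,\bbQ)$. Since the K\"ahler class of a compact K\"ahler manifold is nonzero in $H^2(X,\bbR)$, and hence nonzero in $H^2(X,\bbQ)$, the Euler class has infinite additive order. Moreover $\om$ is in the image of $c_\bbQ : \bigwedge\nolimits^2 H^1(X,\bbQ) \to H^2(X,\bbQ)$ by hypothesis, and this image is a $\bbQ$-vector subspace, so any nonzero rational multiple of $\om$ also lies in the image. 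In particular $e(Y)$ does.

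The second step is to apply Theorem~\ref{thm:CentralRF1} directly: the conclusion is that the image of $z$ in the maximal two-step nilpotent quotient $\calN$ of $\wt{\Gam}$ has infinite order. Since $\wt{\Gam}$ is finitely generated, so is $\calN$, which is therefore a finitely generated two-step nilpotent group; and $\langle z \rangle \cong \bbZ$ injects into $\calN$ under the quotient map.

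Finally, I would apply Corollary~\ref{cor:RFNilQuo} to the central extension above, using that $\Gam$ is residually finite by hypothesis and that $\calN$ is a nilpotent quotient of $\wt{\Gam}$ into which the central $\bbZ = \langle z \rangle$ injects. The conclusion is that $\wt{\Gam} = \pi_1(Y)$ is residually finite, as desired. There is no real obstacle in this argument: all the genuine content is packaged into Theorem~\ref{thm:CentralRF1} (which translates the cup-product hypothesis into a statement about the two-step nilpotent quotient) and Corollary~\ref{cor:RFNilQuo} (which promotes such a nilpotent quotient into residual finiteness via Malcev's theorems). The corollary is essentially just the assembly of these two results.
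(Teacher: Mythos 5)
Your argument is correct and follows essentially the same route as the paper: verify that the Euler class has infinite additive order and lies in the image of $c_\bbQ$, then combine Theorem~\ref{thm:CentralRF1} with Corollary~\ref{cor:RFNilQuo} (the paper packages this combination as Corollary~\ref{cor:CentralRF2}). The only cosmetic difference is that the paper treats the Riemann surface and torus cases separately as classically known and checks infinite order via $\om^n \neq 0$ for $n \ge 2$, whereas you handle all cases uniformly by noting the K\"ahler class is nonzero in real cohomology.
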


\begin{pf}
The case where $X$ is a Riemann surface of genus $g \ge 2$ is the classical case explained at the beginning of this section. When $X$ is a torus, $\pi_1(Y)$ is a finitely generated nilpotent group, so again the result is known. If $X$ has complex dimension $n \ge 2$, then $\om^n \in H^{2n}(X, \bbQ)$ is nontrivial, hence $\om$ has infinite order in $H^2(X, \bbQ)$. Therefore, Corollary~\ref{cor:CentralRF2} applies to prove that $\pi_1(Y)$ is residually finite.
\end{pf}

Note that Theorem~\ref{thm:GeneralCup} is the special case of Corollary~\ref{cor:RFCentralKahler} where $X$ is a smooth compact ball quotient. Also recall that smooth compact ball quotients are smooth projective varieties, so the real cohomology class of the K\"ahler form is always (up to normalization, of course) rational.

Before proving Theorem~\ref{thm:CentralRF1}, we define some notation and give some preliminary results. To start, we only assume that $X$ is a closed manifold with fundamental group $\Gam$. Let $\Del$ denote the derived subgroup of $\Gam$. The earliest reference we could find to the following fact was in work of Sullivan \cite{Sullivan}, who commented that its proof follows from ``a certain amount of soul searching classical algebraic topology''. Also see \cite[Ch.\ 3]{ABCKT}. We use a formulation given by Beauville \cite{BeauvilleDerived}.

\begin{prop}[Cor.\ 1(2) \cite{BeauvilleDerived}]\label{prop:Beauville}
Let $X$ be a connected space homotopic to a CW complex with fundamental group $\Gam$ and $\Del$ be the derived subgroup of $\Gam$. Then there is a canonical exact sequence
\begin{equation}\label{eq:Beauville}
1 \lra \Hom(\Del / [\Del, \Gam], \bbQ) \lra \bigwedge\nolimits^2 H^1(X, \bbQ) \overset{c_\bbQ}{\lra} H^2(X, \bbQ)
\end{equation}
with $c_\bbQ$ the cup product map.
\end{prop}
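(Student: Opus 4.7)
The plan is to reduce the claim to a purely group-theoretic statement about $\Gam = \pi_1(X)$ and then deduce it by dualizing the Hochschild--Serre five-term exact sequence for the abelianization extension $1 \lra \Del \lra \Gam \lra \Gam/\Del \lra 1$.

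First, I would argue that the classifying map $f : X \to K(\Gam, 1)$ induces an isomorphism on $H^1(-,\bbQ)$ and an injection on $H^2(-,\bbQ)$. This is the standard Hopf-type fact that can be read off the low-degree part of the Serre spectral sequence of the homotopy fibration $\wt{X} \to X \to K(\Gam,1)$, using that $\wt{X}$ is simply connected and hence $H^1(\wt{X},\bbQ)=0$. By naturality of cup products, the kernel of $c_\bbQ$ on $X$ coincides with the kernel of the analogous cup product map $\bigwedge\nolimits^2 H^1(\Gam,\bbQ)\to H^2(\Gam,\bbQ)$ for $\Gam$, so I may replace $X$ by $K(\Gam,1)$ throughout.

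Next, the five-term exact sequence in rational homology for $1 \lra \Del \lra \Gam \lra \Gam/\Del \lra 1$ reads
\[
H_2(\Gam,\bbQ) \lra H_2(\Gam/\Del,\bbQ) \lra H_0(\Gam/\Del,\, H_1(\Del,\bbQ)) \lra H_1(\Gam,\bbQ) \lra H_1(\Gam/\Del,\bbQ) \lra 0.
\]
The right-most map is the identity on $(\Gam/\Del)\otimes\bbQ$, so the previous map vanishes and the one before it surjects onto the coinvariants. Since $\Gam$ acts on $\Del_{\mathrm{ab}}=\Del/[\Del,\Del]$ by conjugation, the quotient by the subgroup generated by elements $[g,x]$ with $g\in\Gam$ and $x\in\Del$ identifies those coinvariants with $(\Del/[\Del,\Gam])\otimes\bbQ$. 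Hence one obtains the exact sequence $H_2(\Gam,\bbQ)\to H_2(\Gam/\Del,\bbQ)\to (\Del/[\Del,\Gam])\otimes\bbQ\to 0$.

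Finally, dualize with $\Hom(-,\bbQ)$, which is exact on finite-dimensional $\bbQ$-vector spaces, and use the exterior-algebra description $H^*(A,\bbQ)=\bigwedge\nolimits^* H^1(A,\bbQ)$ of the rational cohomology of a finitely generated abelian group $A$ to identify $H^2(\Gam/\Del,\bbQ)$ with $\bigwedge\nolimits^2 H^1(\Gam,\bbQ)$. Under these identifications, the dual of $H_2(\Gam,\bbQ)\to H_2(\Gam/\Del,\bbQ)$ becomes the inflation map $H^2(\Gam/\Del,\bbQ)\to H^2(\Gam,\bbQ)$, which by naturality of cup products is the cup product map $c_\bbQ$ of $\Gam$. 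I expect the main obstacle to be verifying this last identification cleanly: one has to check that the abelianization map on $H_2$ is dual to the cup product on $H^1\wedge H^1$, which amounts to an unraveling of cup products through the bar resolution together with the exterior-algebra structure on the cohomology of an abelian group. Once that identification is in hand, the desired exact sequence \eqref{eq:Beauville} follows immediately.
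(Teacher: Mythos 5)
Note first that the paper does not prove this proposition at all: it is quoted verbatim as Corollary 1(2) of the cited note of Beauville (with the idea traced back to Sullivan), so there is no internal proof to compare against. Your argument is correct and is, in substance, the standard proof of that quoted result: reduce to $\Gam$ via the classifying map (iso on $H^1(-,\bbQ)$, injection on $H^2(-,\bbQ)$ from the Serre spectral sequence of $\wt{X}\to X\to K(\Gam,1)$), apply the Stallings/Hochschild--Serre five-term sequence in rational homology to $1\to\Del\to\Gam\to\Gam/\Del\to 1$ to get $H_2(\Gam,\bbQ)\to H_2(\Gam/\Del,\bbQ)\to(\Del/[\Del,\Gam])\otimes\bbQ\to 0$, and dualize. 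Two small points. First, $\Hom(-,\bbQ)$ is exact on \emph{all} $\bbQ$-vector spaces (every vector space over a field is injective), so no finite-dimensionality is needed for the dualization itself; where finiteness genuinely enters is in identifying $\Hom(H_2(\Gam/\Del,\bbQ),\bbQ)\cong\Hom\bigl(\bigwedge\nolimits^2 H_1(X,\bbQ),\bbQ\bigr)$ with $\bigwedge\nolimits^2 H^1(X,\bbQ)$, which requires $H^1(X,\bbQ)$ finite-dimensional (true in all of the paper's applications, where $X$ is a closed manifold, and implicit in the quoted statement). Second, the final identification you flag as the main obstacle has a cleaner route than unwinding the bar resolution: the dual of $H_2(\Gam,\bbQ)\to H_2(\Gam/\Del,\bbQ)$ is inflation $H^2(\Gam/\Del,\bbQ)\to H^2(\Gam,\bbQ)$ by naturality of universal coefficients, inflation is a ring homomorphism which is an isomorphism on $H^1$, and for a finitely generated abelian group $A$ the cup product gives a ring isomorphism $\bigwedge\nolimits^* H^1(A,\bbQ)\cong H^*(A,\bbQ)$ (rationally $K(A,1)$ is a torus); hence $c_\bbQ$ factors as that isomorphism followed by inflation, and its kernel is the image of the transgression, namely $\Hom(\Del/[\Del,\Gam],\bbQ)$. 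With those two remarks your proof is complete and agrees with the argument in the cited reference.
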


We now relate the cup products on $X$ and $\U(1)$ bundles over $X$ via the Gysin sequence.

\begin{lem}\label{lem:BundleCohomology}
Let $X$ and $Y$ be as in the statement of Theorem~\ref{thm:CentralRF1}, and suppose that $\om \in H^2(X, \bbZ)$ has infinite additive order. Then the class of the fiber in $H_1(Y, \bbQ)$ is trivial, $H^i(Y, \bbQ) \cong H^i(X, \bbQ)$ for $i = 0,1$, and pullback under the projection $\pi : Y \to X$ induces an injection of $H^2(X, \bbQ)/\langle \om \rangle$ into $H^2(Y,\bbQ)$. Moreover, the diagram
\begin{equation}\label{eq:WedgeMap}
\begin{tikzcd}[column sep=small,scale cd=0.95]
1 \arrow{r} & \Hom(\Del / [\Del, \Gam], \bbQ) \arrow{r} \arrow{d}{\pi^*} & \bigwedge\nolimits^2 H^1(X, \bbQ) \arrow{r}{c_\bbQ^X} \arrow{d}{\pi^*} & H^2(X, \bbQ) \arrow{dr} \arrow{d}{\pi^*} & \\
1 \arrow{r} & \Hom(\wt{\Del} / [\wt{\Del}, \wt{\Gam}], \bbQ) \arrow{r} & \bigwedge\nolimits^2 H^1(Y, \bbQ) \arrow{r}{c_\bbQ^Y} & H^2(Y, \bbQ) \arrow[hookleftarrow]{r} & H^2(X, \bbQ)/\langle \om \rangle
\end{tikzcd}
\end{equation}
commutes, where $c_\bbQ^X, c_\bbQ^Y$ are the cup product maps $\Gam$, $\wt{\Gam}$ are the fundamental groups of $X$, $Y$, and $\Del$, $\wt{\Del}$ are the derived subgroups of $\Gam$, $\wt{\Gam}$.
\end{lem}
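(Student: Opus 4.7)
The plan is to derive all four assertions from the Gysin sequence of the principal $\U(1)$-bundle $\pi : Y \to X$ with rational coefficients, combined with naturality of the Beauville sequence \eqref{eq:Beauville} under pullback by $\pi$. Recall that in rational cohomology the Gysin sequence reads
\[
\cdots \to H^{i-2}(X,\bbQ) \xrightarrow{\,\cup\,\om\,} H^i(X,\bbQ) \xrightarrow{\,\pi^*\,} H^i(Y,\bbQ) \xrightarrow{\,\pi_*\,} H^{i-1}(X,\bbQ) \xrightarrow{\,\cup\,\om\,} H^{i+1}(X,\bbQ) \to \cdots,
\]
and the hypothesis that $\om$ has infinite additive order in $H^2(X,\bbZ)$ is equivalent to $\om$ being nonzero in $H^2(X,\bbQ)$.

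First I would address the cohomological assertions. The $i=0$ isomorphism is automatic since both spaces are connected. For $i=1$, the Gysin sequence restricts to
\[
0 \to H^1(X,\bbQ) \xrightarrow{\pi^*} H^1(Y,\bbQ) \xrightarrow{\pi_*} H^0(X,\bbQ) \xrightarrow{\cup \om} H^2(X,\bbQ),
\]
where the last arrow sends $1 \mapsto \om \neq 0$ and so is injective; consequently $\pi^*$ is an isomorphism on $H^1$. For $i=2$ the sequence identifies $\ker \pi^*$ with the image of $\cup\om$, which is $\langle \om \rangle$, yielding the advertised injection $H^2(X,\bbQ)/\langle \om \rangle \hookrightarrow H^2(Y,\bbQ)$.

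For the fiber class, the cleanest approach is the homology Gysin sequence
\[
H_2(X,\bbQ) \xrightarrow{\cap \om} H_0(X,\bbQ) \to H_1(Y,\bbQ) \to H_1(X,\bbQ) \to 0,
\]
in which the image of $H_0(X,\bbQ) \to H_1(Y,\bbQ)$ is precisely the $\bbQ$-span of the fiber class. Since $\om \neq 0$ in $H^2(X,\bbQ)$, the perfect pairing $H^2(X,\bbQ) \otimes H_2(X,\bbQ) \to \bbQ$ forces $\cap\, \om$ to hit a nonzero element of the one-dimensional space $H_0(X,\bbQ)$; hence $\cap\, \om$ is surjective, and the fiber class vanishes in $H_1(Y,\bbQ)$.

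For commutativity of the diagram \eqref{eq:WedgeMap}, the middle square commutes because $\pi^* : H^*(X,\bbQ) \to H^*(Y,\bbQ)$ is a ring map, and thus intertwines the cup-product maps $c_\bbQ^X$ and $c_\bbQ^Y$. The right-hand triangle is the tautological factorization of $\pi^* : H^2(X,\bbQ) \to H^2(Y,\bbQ)$ through $H^2(X,\bbQ)/\langle \om \rangle$, using $\pi^* \om = 0$ from the Gysin computation above. The left square is functoriality of the Beauville sequence: the surjection $\wt\Gam \twoheadrightarrow \Gam$ carries $\wt\Del$ into $\Del$ and $[\wt\Del,\wt\Gam]$ into $[\Del,\Gam]$, hence induces a map $\wt\Del/[\wt\Del,\wt\Gam] \to \Del/[\Del,\Gam]$, and dualizing over $\bbQ$ yields the left vertical arrow. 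The only real technicality is verifying that \eqref{eq:Beauville} is natural in the space; I expect this to be the mildest obstacle, and it follows because Beauville's sequence is assembled from the Hopf formula and low-degree terms of the Lyndon--Hochschild--Serre spectral sequence, each of which is functorial in the underlying group homomorphism induced by $\pi$.
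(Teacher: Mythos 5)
Your proposal is correct and takes essentially the same route as the paper: the rational Gysin sequence of the $\U(1)$-bundle yields the statements about $H^0$, $H^1$, and $H^2(X,\bbQ)/\langle\om\rangle$, and commutativity of \eqref{eq:WedgeMap} follows from $\pi^*$ being a ring map together with (the naturality of) Beauville's identification of the kernel. The only cosmetic difference is your treatment of the fiber class via the homology Gysin sequence; the paper argues dually, noting that since $\pi^*$ surjects onto $H^1(Y,\bbQ)$, integration over the fiber $\pi_* : H^1(Y,\bbQ) \to H^0(X,\bbQ)$ is zero, which says precisely that the fiber class pairs trivially with $H^1(Y,\bbQ)$ and hence vanishes in $H_1(Y,\bbQ)$.
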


\begin{pf}
Throughout this proof, all homology groups have $\bbQ$ coefficients. Let $w_i : H^i(X) \to H^{i+2}(X)$ be the map induced by cup product with $\om$. Then the standard Gysin sequence for $Y$ \cite[Thm.\ 5.7.11]{Spanier} gives
\[
\begin{tikzcd}
0 \arrow{r} & H^1(X) \arrow{r}{\pi^*} \arrow[d,phantom, ""{coordinate, name=Z}] & H^1(Y) \arrow{r}{\pi_*} & H^0(X) \arrow{r}{w_0} & H^2(X) \arrow[dllll,rounded corners,to path={ --([xshift=2ex]\tikztostart.east)|- (Z)[near end]\tikztonodes-| ([xshift=-2ex]\tikztotarget.west)-- (\tikztotarget)}, "\pi^*" above] \\
H^2(Y) \arrow{r}{\pi_*} & H^1(X) \arrow{r}{w_1} & H^3(X) \arrow{r}{\pi^*} & \cdots
\end{tikzcd}
\]
where $\pi^*$ is pullback and $\pi_*$ is integration over the fiber. Then $\om$ has infinite order in $H^2(X, \bbZ)$ if and only if it is nonzero in $H^2(X)$, which is moreover true if and only if $w_0$ is injective. We conclude that $\pi^*$ induces an isomorphism between $H^i(X)$ and $H^i(Y)$ for $i = 0,1$ when $\om$ has infinite order in $H^2(X, \bbZ)$. Therefore $H^1(Y) \to H^0(X)$ is the zero map, and since this map is integration over the fiber, the class of the fiber in $H_1(Y)$ must be trivial.

Further, if $\om$ has infinite order in $H^2(X)$, then $\pi^*$ induces an injection $H^2(X) / \langle \om \rangle \hookrightarrow H^2(Y)$. Commutativity of the diagram in the statement of the lemma now follows from Proposition~\ref{prop:Beauville} and the fact that pullback is a ring homomorphism for the cup product. This completes the proof.
\end{pf}

We are now prepared for the main technical result on two-step nilpotent quotients that we need for the proof of Theorem~\ref{thm:CentralRF1}.

\begin{lem}\label{lem:NilQuo}
Suppose that $X$, $Y$, $\Gam$, and $\wt{\Gam}$ satisfy the hypotheses of Theorem~\ref{thm:CentralRF1}. Let $\Del$ (resp.\ $\wt{\Del}$) be the derived subgroup of $\Gam$ (resp.\ $\wt{\Gam}$), and let $z$ be a generator for $\ker(\wt{\Gam} \to \Gam)$. Then
\begin{equation}\label{eq:NilQuo}
\mathrm{dim}(\Hom(\wt{\Del} / [\wt{\Del}, \wt{\Gam}], \bbQ)) = \mathrm{dim}(\Hom(\Del / [\Del, \Gam], \bbQ)) + \ep
\end{equation}
for $\ep \in \{0,1\}$. Moreover, $\ep = 1$ if and only if the image of $z$ in the maximal two-step nilpotent quotient of $\wt{\Gam}$ has infinite order.
\end{lem}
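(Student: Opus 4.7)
The plan is to set $N = \wt{\Gam}/[\wt{\Del},\wt{\Gam}]$ and $M = \Gam/[\Del,\Gam]$ for the maximal two-step nilpotent quotients, so that $[N,N] = \wt{\Del}/[\wt{\Del},\wt{\Gam}]$ and $[M,M] = \Del/[\Del,\Gam]$, and then analyze the resulting induced maps. Since $z$ is central in $\wt{\Gam}$, the central extension $1 \to \langle z \rangle \to \wt{\Gam} \to \Gam \to 1$ descends to a central extension
\[
1 \lra \langle \conj{z} \rangle \lra N \lra M \lra 1,
\]
where $\conj{z}$ denotes the image of $z$ in $N$. The induced surjection $[N,N] \twoheadrightarrow [M,M]$ therefore has kernel $[N,N] \cap \langle \conj{z} \rangle$, which, being a subgroup of the cyclic group $\langle \conj{z} \rangle$, is itself cyclic.

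This kernel computation is the crux: since $\dim \Hom(\cdot,\bbQ)$ is the free $\bbZ$-rank, the difference $\dim \Hom(\wt{\Del}/[\wt{\Del},\wt{\Gam}],\bbQ) - \dim \Hom(\Del/[\Del,\Gam],\bbQ)$ is the rank of the cyclic group $[N,N] \cap \langle \conj{z} \rangle$, which is either $0$ or $1$. This establishes \eqref{eq:NilQuo} with $\ep \in \{0,1\}$ and shows that $\ep = 1$ if and only if $[N,N] \cap \langle \conj{z} \rangle$ is infinite cyclic, i.e., if and only if both: (a) $\conj{z}$ has infinite order in $N$, and (b) some positive power $\conj{z}^n$ lies in $[N,N]$.

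The remaining point is that (b) holds unconditionally, so the characterization of $\ep = 1$ collapses to (a). Condition (b) is equivalent to the image of $\conj{z}$ in $N^{ab} = \wt{\Gam}^{ab} = H_1(Y,\bbZ)$ being torsion. But this image is precisely the class of the fiber of $Y \to X$, which by Lemma~\ref{lem:BundleCohomology} is trivial in $H_1(Y,\bbQ)$. Thus $\conj{z}^n \in [N,N]$ for some $n>0$ automatically, and we conclude that $\ep = 1$ if and only if $\conj{z}$ has infinite order in the maximal two-step nilpotent quotient $N$, which is the ``moreover'' claim.

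The only nontrivial ingredient is the triviality of the fiber class in $H_1(Y,\bbQ)$, and this has already been extracted from the Gysin sequence in Lemma~\ref{lem:BundleCohomology}; the remainder is formal bookkeeping with central extensions and their derived subgroups, so I do not expect any serious obstacle beyond correctly matching up the two cyclic quotients.
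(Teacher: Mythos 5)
Your proposal is correct and follows essentially the same route as the paper: both arguments compare the derived subgroups of the maximal two-step nilpotent quotients via the short exact sequence with cyclic kernel coming from the central extension, and both use Lemma~\ref{lem:BundleCohomology} (triviality of the fiber class in $H_1(Y,\bbQ)$) to see that a power of the central element already lies in that derived subgroup, so that the rank jump $\ep=1$ is equivalent to the image of $z$ having infinite order. The paper packages this as a $3\times 3$ diagram with the element $\wh{z}{\,}^m$, while you identify the kernel directly as $[N,N]\cap\langle \conj{z}\rangle$ and separate the two conditions, but the logical content is the same.
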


\begin{pf}
Let
\begin{align*}
N &= \Gam / [\Del, \Gam] & \wt{N} &= \wt{\Gam} / [\wt{\Del}, \wt{\Gam}]
\end{align*}
be the maximal two-step nilpotent quotients of $\Gam$ and $\wt{\Gam}$. The derived subgroups of $N$ and $\wt{N}$ are $\Del / [\Del, \Gam]$ and $\wt{\Del} / [\wt{\Del}, \wt{\Gam}]$, respectively. Let $\wh{z}$ denote the image of $z$ in $\wt{N}$.

Since $z$ generates the fundamental group of the fiber, and the fiber is trivial in $H_1(Y, \bbQ)$ by Lemma~\ref{lem:BundleCohomology}, the image of $z$ in the abelianization of $\wt{\Gam}$ must have finite order. Therefore, we conclude that there is some $m \ge 1$ so that $\wh{z}{\,}^m \in \wt{\Del} / [\wt{\Del}, \wt{\Gam}]$. Indeed, $\wt{N}$ and $\wt{\Gam}$ have the same abelianization and $\wt{\Del} / [\wt{\Del}, \wt{\Gam}]$ is the commutator subgroup of $\wt{N}$. We then have a diagram:
\[
\begin{tikzcd}
 & \textcolor{blue}{1} \arrow[blue]{d} & 1 \arrow{d} & 1 \arrow{d} & \\
1 \arrow{r} & \textcolor{blue}{\langle\, \wh{z}{\,}^m\, \rangle} \arrow{r} \arrow[blue]{d} & \langle\, \wh{z}\, \rangle \arrow{r} \arrow{d} & \bbZ / m \arrow{r} \arrow{d} & 1 \\
1 \arrow{r} & \textcolor{blue}{\wt{\Del} / [\wt{\Del}, \wt{\Gam}]} \arrow{r} \arrow[blue]{d} & \wt{N} \arrow{r} \arrow{d} & \wt{\Gam} / \wt{\Del} \arrow{r} \arrow{d} & 1 \\
1 \arrow{r} & \textcolor{blue}{\Del / [\Del, \Gam]} \arrow{r} \arrow[blue]{d} & N \arrow{r} \arrow{d} & \Gam / \Del \arrow{r} \arrow{d} & 1 \\
& \textcolor{blue}{1} & 1 & 1 &
\end{tikzcd}
\]
To justify this, one wants to know that $\wt{N} / \langle\, \wh{z}\, \rangle = N$. Note that $\wt{\Gam} \to \wt{N} / \langle\, \wh{z}\, \rangle$ factors through $\wt{\Gam} / \langle z \rangle = \Gam$, so $\wt{N} / \langle\, \wh{z}\, \rangle$ is a quotient of $\Gam$. Moreover, $\wt{\Del}$ projects to $\Del$ and $[\wt{\Del}, \wt{\Gam}]$ has image $[\Del, \Gam]$. Thus the kernel of $\Gam \to \wt{N} / \langle\, \wh{z}\, \rangle$ is $[\Gam, \Del]$, i.e., the kernel of $\Gam \to N$, hence $N = \wt{N} / \langle\, \wh{z}\, \rangle$.

Then the vertical exact sequence in \textcolor{blue}{blue} is an exact sequence of finitely generated abelian groups. Moreover, the free ranks of $\wt{\Gam} / [\wt{\Del}, \wt{\Gam}]$ and $\Gam / [\Del, \Gam]$ differ by $\ep \in \{0,1\}$ with $\ep = 1$ if and only if $\wh{z}{\, }^m$ (equivalently, $\wh{z}$) has infinite order in $\wt{N}$. These are equivalent to Equation~\eqref{eq:NilQuo} by taking $\Hom(-, \bbQ)$, so this proves the lemma.
\end{pf}

We are now prepared to prove Theorem~\ref{thm:CentralRF1}.

\begin{pf}[Proof of Theorem~\ref{thm:CentralRF1}]
Let $K_X$, $I_X$, $K_Y$, $I_Y$ denote the kernel and image of $c_\bbQ^X$ and $c_\bbQ^Y$, and $k_X$, $i_X$, $k_Y$, $i_Y$ denote their dimensions over $\bbQ$. Pullback induces an isomorphism between $H^1(X, \bbQ)$ and $H^1(Y, \bbQ)$ by Lemma~\ref{lem:BundleCohomology}, and see from Equation~\eqref{eq:WedgeMap} that $\pi^*(I_X) = I_Y$. Therefore we have equalities:
\begin{align*}
k_X + i_X &= \dim_\bbQ \bigwedge\nolimits^2 H^1(X, \bbQ) \\
&= \dim_\bbQ \bigwedge\nolimits^2 H^1(Y, \bbQ) \\
&= k_Y + i_Y
\end{align*}
In other words,
\begin{equation}\label{eq:DimEq}
i_X - i_Y = k_Y - k_X.
\end{equation}
Moreover, the proof of Lemma~\ref{lem:BundleCohomology} implies that $i_X = i_Y$ if $\om \notin I_X$ and $i_X = i_Y + 1$ if $\om \in I_X$. We conclude that the left side of Equation~\eqref{eq:DimEq} equals $1$ if and only if $\om \in I_X$. Similarly, Equation~\eqref{eq:WedgeMap} and Lemma~\ref{lem:NilQuo} imply that the right side of Equation~\eqref{eq:DimEq} is $1$ if and only if the image of $z$ in the maximal two-step nilpotent quotient of $\wt{\Gam}$ has infinite order. This proves the theorem.
\end{pf}

We now give an example where Corollary~\ref{cor:RFCentralKahler} applies to prove residual finiteness of a lattice in the universal cover of $\PU(2,1)$. Let $\Gam_S < \PU(2,1)$ be the fundamental group of the Stover surface $X_S$, a smooth compact ball quotient surface first studied in \cite{StoverHurwitz} and explored further by Dzambic and Roulleau \cite{DzambicRoulleau}. In particular, Dzambic and Roulleau proved that the image of the cup product $\bigwedge\nolimits^2 H^1(X_S, \bbQ) \to H^2(X_S, \bbQ)$ contains $H^{1,1}(X_S, \bbQ)$ \cite[Thm.~5]{DzambicRoulleau}, hence the canonical class is in the image of the cup product map. Corollary~\ref{cor:RFCentralKahler} then implies that the preimage of $\Gam_S$ in the universal cover of $\PU(2,1)$, which is the central extension with Euler class the canonical class, is residually finite. Since $\Gam_S$ is commensurable with both the Cartwright--Steger surface and the Deligne--Mostow lattice with weights $(11,7,2,2,2)/12$ (see \cite{StoverHurwitz}), Lemma~\ref{lem:CommensurableRF} allows us to conclude:

\begin{thm}\label{thm:StoverRF}
Let $\Gam < \PU(2,1)$ be either the Deligne--Mostow lattice with weights $(11,7,2,2,2)/12$, the fundamental group of the Cartwright--Steger surface, the fundamental group of the Stover surface, or any other lattice in $\PU(2,1)$ commensurable with these. Then the preimage $\wt{\Gam}$ of $\Gam$ in the universal cover of $\PU(2,1)$ is residually finite.
\end{thm}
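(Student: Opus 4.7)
The strategy is to verify the hypotheses of Corollary~\ref{cor:RFCentralKahler} for the Stover surface $X_S$, conclude residual finiteness of $\wt{\Gam}_S$, and then transport this conclusion to the other groups in the list via Lemma~\ref{lem:CommensurableRF}.

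First I would confirm the input for Corollary~\ref{cor:RFCentralKahler}. The Stover surface $X_S = \Gam_S \bs \bbB^2$ is a smooth compact ball quotient, hence a closed aspherical K\"ahler manifold; its fundamental group $\Gam_S < \PU(2,1)$ is linear, therefore residually finite by Malcev. Because the Bergman metric on $\bbB^2$ is K\"ahler--Einstein with negative Einstein constant, the K\"ahler class $\om \in H^2(X_S, \bbQ)$ is a nonzero rational multiple of $c_1(K_{X_S})$. By \cite[Thm.~5]{DzambicRoulleau}, the image of $c_\bbQ: \bigwedge\nolimits^2 H^1(X_S, \bbQ) \to H^2(X_S, \bbQ)$ contains $H^{1,1}(X_S, \bbQ)$; since $\om \in H^{1,1}(X_S, \bbQ)$, it lies in the image of $c_\bbQ$.

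Next I would identify $\wt{\Gam}_S$ as the fundamental group of an appropriate principal $\U(1)$ bundle so that Corollary~\ref{cor:RFCentralKahler} applies directly. As recalled in the introduction, $\wt{\Gam}_S$ is the fundamental group of the complement of the zero section in the total space of the canonical bundle $K_{X_S} \to X_S$, which deformation retracts onto the unit circle bundle $Y_S \to X_S$. This $Y_S$ is a principal $\U(1)$ bundle with Euler class $c_1(K_{X_S})$, a nonzero rational multiple of $\om$. Applying Corollary~\ref{cor:RFCentralKahler} to $Y_S \to X_S$ yields that $\wt{\Gam}_S = \pi_1(Y_S)$ is residually finite.

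It remains to propagate the conclusion along the commensurability class. If $\Gam < \PU(2,1)$ is commensurable with $\Gam_S$, then $\Lam = \Gam \cap \Gam_S$ has finite index in both. The preimage $\wt{\Lam}$ in $\wt{\PU(2,1)}$ has finite index in both $\wt{\Gam}$ and $\wt{\Gam}_S$, so $\wt{\Gam}$ and $\wt{\Gam}_S$ are commensurable. Since $\wt{\Gam}_S$ is residually finite, Lemma~\ref{lem:CommensurableRF} gives residual finiteness of $\wt{\Gam}$. Because \cite{StoverHurwitz} establishes that $\Gam_S$ is commensurable with the Deligne--Mostow lattice of weights $(11,7,2,2,2)/12$ and with the fundamental group of the Cartwright--Steger surface, this concludes the theorem. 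The only nontrivial ingredient in the argument is the Dzambic--Roulleau cup product computation; everything else is a clean application of the machinery developed in \S\ref{sec:RF} and \S\ref{sec:Beauville}.
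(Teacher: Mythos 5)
Your proposal is correct and follows essentially the same route as the paper: the paper also invokes the Dzambic--Roulleau computation \cite[Thm.~5]{DzambicRoulleau} to place the canonical (hence K\"ahler) class in the image of $c_\bbQ$, applies Corollary~\ref{cor:RFCentralKahler} to the circle bundle with Euler class the canonical class to get residual finiteness of $\wt{\Gam}_S$, and then spreads the conclusion over the commensurability class (Cartwright--Steger, the $(11,7,2,2,2)/12$ Deligne--Mostow lattice, etc.) via Lemma~\ref{lem:CommensurableRF}. Your extra care in checking that commensurability in $\PU(2,1)$ lifts to commensurability of the preimages in the universal cover, and that the Euler class is a nonzero multiple of the K\"ahler class, just makes explicit steps the paper leaves implicit; the paper also gives an independent second proof in \S\ref{sec:Ex} via lifted presentations and nilpotent quotients, which your argument does not need.
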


\begin{rem}
We cannot use Corollary~\ref{cor:RFCentralKahler} to prove Theorem~\ref{thm:StoverRF} for the Cartwright--Steger surface $X_{CS}$ directly. Since $h^{1,0}(X_{CS}) = 1$, the image of the cup product from $H^1$ to $H^2$ is the one-dimensional subspace spanned by $\de = \al \wedge \conj{\al}$ for some $\al \in H^{1,0}(X_{CS})$. Then $\de \wedge \de = 0$, hence $\de$ cannot be a multiple of the canonical class and Corollary~\ref{cor:RFCentralKahler} does not apply. One can show with Magma (e.g., exactly as in \cite[\S 2]{DzambicRoulleau}) that the derived subgroup of the maximal two-step nilpotent quotient of the preimage of $\pi_1(X_{CS})$ in the universal cover is finite.
\end{rem}

\begin{rem}
Since, as described above, every finitely generated nilpotent group has a torsion-free subgroup of finite index, and torsion-free nilpotent groups are linear, it isn't hard to see that the same holds for an arbitrary finitely generated nilpotent group using the induced representation. More generally, linearity is a commensurability invariant of groups. Therefore, if we assume instead throughout this section that $\Gam = \pi_1(X)$ is linear, then we conclude that $\wt{\Gam} = \pi_1(Y)$ is also linear. Indeed, with notation as above, if $\Gam$ embeds in $\GL_{N_1}(\bbR)$ and $\wt{N}$ into $\GL_{N_2}(\bbR)$, the natural composition
\[
\wt{\Gam} \to \Gam \times \wt{N} \hookrightarrow \GL_{N_1+N_2}(\bbR)
\]
is a faithful linear representation of $\wt{\Gam}$. Thus the results in this section hold with residual finiteness replaced with linearity.
\end{rem}

\section{Complex projective surfaces with the same fundamental group as a ball quotient}\label{sec:Samepi1}

The purpose of this section is to prove the following result.

\begin{thm}\label{thm:pi2=0}
Let $M$ be a smooth complex projective surface such that $\pi_2(M) = \{0\}$ and $\pi_1(M)$ is isomorphic to a torsion-free cocompact lattice $\Gam$ in $\PU(2,1)$. Then $M$ is biholomorphic to $\Gam \bs \bbB^2$.
\end{thm}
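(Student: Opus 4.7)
The plan is to use Eells--Sampson and Siu's rigidity theorem to realize the classifying map by a holomorphic map, and then to use $\pi_2(M) = 0$ to upgrade this to a biholomorphism. Since $X := \Gam \bs \bbB^2$ is a $K(\Gam, 1)$, the identification $\pi_1(M) \cong \Gam$ determines a classifying map $f : M \to X$, unique up to homotopy, inducing an isomorphism on $\pi_1$. Because $X$ carries a metric of strictly negative sectional curvature, Eells--Sampson produces a smooth harmonic map $h : M \to X$ homotopic to $f$. Since $X$ has strongly negative curvature in the sense of Siu, his rigidity theorem yields that any harmonic map $M \to X$ whose real rank is at least $4$ somewhere is holomorphic or anti-holomorphic. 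If $h$ had real rank at most $2$ everywhere, then by standard Bochner/Sampson-type arguments its image would lie in a (holomorphic or anti-holomorphic) curve $C \subset X$, so $h_*(\pi_1(M))$ would factor through $\pi_1(C)$, a group of cohomological dimension at most $2$. But $h_* = f_*$ is an isomorphism of $\Gam$ with itself, and $\Gam$ has cohomological dimension $4$ (being the fundamental group of the closed aspherical $4$-manifold $X$). This contradiction, together with a parity argument excluding the intermediate rank-$3$ case, forces $h$ to be $\pm$holomorphic. After replacing the complex structure on $M$ by its conjugate if necessary, we may assume $h$ is holomorphic.

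It remains to show that the holomorphic $h : M \to X$ with $h_*$ an isomorphism is a biholomorphism; here the hypothesis $\pi_2(M) = 0$ enters decisively. First, $h$ must be surjective, as otherwise its image is a proper complex subvariety of $X$ of dimension $\le 1$, again forcing a factorization of $h_*$ through a group of cohomological dimension $\le 2$. Take the Stein factorization $h = h_2 \circ h_1$ with $h_1 : M \to M'$ having connected fibers and $h_2 : M' \to X$ finite. Connected fibers make $h_{1*}$ surjective on $\pi_1$, and $h_{2*}$ is surjective since $h_*$ is; as their composition equals the isomorphism $h_*$, both $h_{1*}$ and $h_{2*}$ are isomorphisms. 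Thus $\deg(h_2) = 1$, so $h_2$ is a birational finite morphism onto the smooth surface $X$, hence (by normality of $X$) an isomorphism; in particular $M' \cong X$ is smooth. Were any fiber of $h_1$ positive-dimensional, the standard factorization of birational morphisms between smooth projective surfaces would show that $h_1$ factors through a blowup, so that $M$ contains a $(-1)$-curve $E \cong \bbP^1$. As $E$ is simply connected, the inclusion $E \hookrightarrow M$ lifts to an embedded $2$-sphere $\wt E \subset \wt M$, and the pushforward of $[\wt E]$ under the covering $\wt M \to M$ equals $[E] \ne 0 \in H_2(M, \bbQ)$ (since $[E]^2 = -1 \ne 0$). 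Hence $[\wt E] \ne 0$ in $H_2(\wt M)$. But $\wt M$ is simply connected with $\pi_2(\wt M) = \pi_2(M) = 0$, so by Hurewicz $H_2(\wt M) = 0$, a contradiction. Therefore all fibers of $h_1$ are singletons, so $h_1$ is a holomorphic bijection between smooth compact complex surfaces, hence a biholomorphism. Composing, $h$ is a biholomorphism.

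The main obstacle is the middle step: one needs Siu's rigidity in a sharp enough form, combined with the cohomological-dimension argument for $\Gam$, to force the harmonic representative to be $\pm$holomorphic. The hypothesis $\pi_2(M) = 0$ then enters naturally in the final step to forbid $(-1)$-curves in $M$, preventing any nontrivial blowdown in the holomorphic representative $h$ and thereby reducing the verification that $h$ is biholomorphic to the essentially tautological statement that a holomorphic bijection between smooth surfaces is biholomorphic.
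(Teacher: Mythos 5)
Your first step (harmonic representative of the classifying map, Siu rigidity, and the cohomological-dimension argument to exclude the low-rank cases) is essentially the paper's Lemma~\ref{lem:HolMap}, which invokes \cite[Thm.~7.2(b)]{Carlson-ToledoIHES} to get a surjective holomorphic $f:M\to\Gam\bs\bbB^2$ inducing the given isomorphism on $\pi_1$. After that, however, your argument has a genuine gap at the assertion ``$h_{2*}$ is an isomorphism, thus $\deg(h_2)=1$.'' For a \emph{finite but ramified} morphism, an isomorphism on fundamental groups does not bound the degree: the degree is read off from the action on $H^4$ (or on fundamental classes), not from $\pi_1$, and since $M'$ is not known to be aspherical at this stage, nothing forces the map to be a homotopy equivalence. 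Indeed, the constructions in \S\ref{sec:Realize} of this very paper produce smooth projective surfaces with a \emph{finite} holomorphic map onto a smooth compact ball quotient that induces an isomorphism on $\pi_1$ yet has degree $d^3>1$; for such maps the Stein factorization is trivial, so they are direct counterexamples to the inference you use. The hypothesis $\pi_2(M)=0$ must enter precisely to control the finite part $h_2$, whereas you only invoke it later to forbid $(-1)$-curves in the connected-fibered part $h_1$ --- which is the easy half of the problem.

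The paper closes exactly this gap with nontrivial input: it shows the universal cover $\wt M$ is holomorphically convex (Katzarkov--Ramachandran/EKPR, or the elementary properness argument of Remark~\ref{rem:pinf}), observes that the Cartan--Remmert reduction $\wt M\to Y$ has $Y$ a two-dimensional Stein space (because $\wt f$ factors through it), and deduces $H_3(\wt M,\bbZ)=0$ from the vanishing of homology of Stein spaces together with the fact that the reduction only contracts compact curves. Combined with $\pi_2(M)=0$ this makes $M$ aspherical, so $f$ is a homotopy equivalence (in particular of degree $\pm1$), and then Siu's theorem upgrades it to a biholomorphism. If you want to keep your Stein-factorization framework, you would need an argument of comparable strength showing $\deg(h_2)=1$ (equivalently, that $f$ has degree one); without it, the proof does not go through.
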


We start with some basic facts that we will need in the proof.

\begin{lem}\label{lem:HolMap}
If $M$ is a smooth complex projective surface such that $\pi_1(M)$ is isomorphic to a torsion-free cocompact lattice $\Gam < \PU(2,1)$, then there is a surjective holomorphic map $f : M \to \Gam \bs \bbB^2$ realizing the isomorphism $\pi_1(M) \cong \Gam$.
\end{lem}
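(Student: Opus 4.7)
The plan is to produce the desired holomorphic map using Eells--Sampson harmonic map theory combined with Siu's rigidity theorem for harmonic maps into complex hyperbolic manifolds, followed by a Borel density argument to rule out degenerate cases.

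First, since $X := \Gam \bs \bbB^2$ is a closed aspherical manifold of type $K(\Gam,1)$ (its universal cover $\bbB^2$ being contractible), standard obstruction theory yields a continuous map $f_0 : M \to X$ realizing the given isomorphism $\pi_1(M) \cong \Gam$, unique up to homotopy. The Bergman metric on $\bbB^2$ descends to a Riemannian metric of strictly negative sectional curvature on $X$, so the theorem of Eells--Sampson produces a harmonic map $f : M \to X$ homotopic to $f_0$, where $M$ is equipped with its K\"ahler metric.

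Next I would apply Siu's rigidity theorem. Since the Bergman metric has strongly negative curvature in the sense of Siu and $X$ has complex dimension two, Siu's dichotomy asserts that either $f$ is holomorphic or antiholomorphic, or else the real rank of $df$ is at most two everywhere, in which case the image of $f$ is contained in a closed complex analytic curve $C \subset X$. I plan to rule out the latter case as follows. Lifting to universal covers, the $\Gam$-equivariant harmonic map $\tilde{f} : \tilde{M} \to \bbB^2$ takes values in $V := \pi^{-1}(C) \subsetneq \bbB^2$, where $\pi : \bbB^2 \to X$ is the covering projection. Then $V$ is a proper closed complex analytic subvariety of $\bbB^2$ of complex dimension at most one that is preserved setwise by $\Gam$. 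By Borel density, $\Gam$ is Zariski dense in $\PU(2,1)$, so its set-theoretic stabilizer of $V$, a real algebraic subgroup, is all of $\PU(2,1)$. But $\PU(2,1)$ acts transitively on $\bbB^2$, so the only $\PU(2,1)$-invariant subvarieties are $\varnothing$ and $\bbB^2$; since $\dim_\bbC V \le 1 < 2$ and $V$ is nonempty (as $f$ is nonconstant), this is a contradiction.

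Consequently, $f$ is holomorphic or antiholomorphic. In the antiholomorphic case, replacing the chosen embedding $\Gam \hookrightarrow \PU(2,1)$ by its complex conjugate (another torsion-free cocompact lattice abstractly isomorphic to $\Gam$) converts $f$ into a holomorphic map with target the corresponding ball quotient. Surjectivity is then immediate: the image of $f$ is a closed complex analytic subvariety of $X$ of positive dimension, and if it had dimension one, $f$ would factor through a compact complex curve, contradicting the Borel density argument above. The main obstacle is invoking Siu's rigidity in precisely the form that yields the clean trichotomy between (anti)holomorphicity, constancy, and factoring through a curve for a complex hyperbolic surface target; the remaining ingredients (Eells--Sampson, Borel density, transitivity of $\PU(2,1)$ on $\bbB^2$) are standard.
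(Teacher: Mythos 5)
Your overall strategy (classifying map, Eells--Sampson harmonic map, Siu-type rigidity, then exclusion of the degenerate case) is the same as the paper's, but your exclusion of the degenerate case has a genuine gap. First, the low-rank alternative is misstated: for a harmonic map into a compact quotient $X = \Gam \bs \bbB^2$, the relevant structure theorem (Carlson--Toledo, Thm.~7.2, which is what the paper cites) says that when $f$ is not $\pm$holomorphic, either $f(M)$ is a closed geodesic, or $f$ factors as a surjective holomorphic map $M \to C$ onto a Riemann surface followed by a \emph{harmonic} map $C \to X$. Neither conclusion places the image inside a closed complex analytic curve of $X$: a closed geodesic is not a complex curve, and the harmonic map $C \to X$ need not have complex analytic image. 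Second, and more seriously, the Borel density step cannot work even if the image did lie in a complex curve $C \subset X$. The setwise stabilizer of $V = \pi^{-1}(C) \subset \bbB^2$ is a closed subgroup, but there is no reason it is Zariski closed (real algebraic), so Zariski density of $\Gam$ does not force it to be all of $\PU(2,1)$; worse, the conclusion you are aiming for is simply false, since a compact ball quotient can contain compact complex curves, and the preimage of such a curve is a nonempty proper $\Gam$-invariant closed analytic subvariety of $\bbB^2$. Hence no contradiction can be extracted from $\Gam$-invariance alone.

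What rescues the argument --- and is the paper's route --- is to use that $f_*$ is an isomorphism onto $\Gam$ itself: in either degenerate case (geodesic image, or factorization through a Riemann surface) the isomorphism $f_* : \Gam \to \Gam$ would factor through a group of cohomological dimension at most two, which is impossible because $\Gam$, a torsion-free cocompact lattice in $\PU(2,1)$, is a Poincar\'e duality group of dimension four. The same cohomological-dimension argument (with Stein factorization if needed) is what should replace your final surjectivity step, since that step also leans on the flawed Borel density claim. Your treatment of the antiholomorphic case by changing the complex structure is fine and matches the paper.
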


\begin{pf}
Set $N = \Gam \bs \bbB^2$. Then $N$ is a $K(\Gam, 1)$ that realizes $\Gam$ as a Poincar\'e duality group of dimension four and there is a nonconstant continuous map $f : M \to N$ realizing this isomorphism on fundamental groups. Using standard arguments from the theory of harmonic maps and Siu Rigidity \cite[Thm.~1]{Siu}, we can assume that $f$ is harmonic, hence pluriharmonic. Specifically, it follows from \cite[Thm.~7.2(b)]{Carlson-ToledoIHES} that there are three possibilities:
\begin{enum}

\item $f(M)$ is a closed geodesic on $N$;

\item there is a Riemann surface $C$ so that $f$ factors as the composition
\[
\begin{tikzcd}
M \arrow{r}{\phi} \arrow{dr}{f} & C \arrow{d}{\psi} \\
 & N
\end{tikzcd}
\]
where $\phi : M \to C$ is a surjective holomorphic map and $\psi : C \to N$ is harmonic;

\item $f$ is surjective and holomorphic (or conjugate holomorphic, which can be safely ignored by a change of complex structure).

\end{enum}
However, the first two cases are impossible. Indeed, in each case the isomorphism $f_* : \Gam \overset{\sim}{\lra} \Gam$ would factor through a surjective homomorphism onto a group of cohomological dimension at most two, which is absurd since $\Gam$ has cohomological dimension four. This proves the lemma.
\end{pf}

\begin{prop}\label{prop:HConvex}
Suppose that $M$ is a smooth complex projective surface with $\pi_1(M)$ isomorphic to a torsion-free cocompact lattice $\Gam < \PU(2,1)$. Then the universal cover $\wt{M}$ of $M$ is holomorphically convex.
\end{prop}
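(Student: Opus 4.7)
The plan is to produce a proper holomorphic map from $\wt{M}$ to $\bbB^2$ and then invoke the standard fact that the source of a proper holomorphic map to a Stein space is holomorphically convex; recall that $\bbB^2$ is Stein (a bounded domain of holomorphy).

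First, by Lemma~\ref{lem:HolMap}, there is a surjective holomorphic map $f : M \to N := \Gam \bs \bbB^2$ inducing the given isomorphism $\pi_1(M) \cong \Gam$. Because $f_*$ is an isomorphism on $\pi_1$, covering space theory produces a $\Gam$-equivariant holomorphic lift $\wt{f} : \wt{M} \to \bbB^2$. The key observation is that $\wt{M}$ is identified with the topological fibered product
\[
P := M \times_N \bbB^2 = \{(x, \wt{y}) \in M \times \bbB^2 : f(x) = \pi(\wt{y})\},
\]
where $\pi : \bbB^2 \to N$ is the universal covering, and under this identification $\wt{f}$ is projection onto the second factor. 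The projection $P \to M$ is a covering map, its total space is connected because $f_*$ is surjective, and it corresponds to the universal cover because $\ker f_* = 1$. That $\wt{f}$ is holomorphic (and not merely continuous) follows from $\pi$ being a local biholomorphism together with holomorphy of $f$.

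Second, verify that $\wt{f}$ is proper. Since $M$ is compact, $f$ is automatically proper, and properness is preserved under topological base change: for any compact $K \subset \bbB^2$ one has
\[
\wt{f}^{-1}(K) \subseteq f^{-1}(\pi(K)) \times K,
\]
and the right side is compact because $\pi(K)$ is compact and $f$ is proper. As $\wt{f}^{-1}(K)$ is closed in this compact set, it is itself compact.

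Finally, since $\bbB^2$ is Stein and $\wt{f}$ is a proper holomorphic map, $\wt{M}$ is holomorphically convex: the holomorphic hull $\wh{L}$ of any compact $L \subset \wt{M}$ is contained in $\wt{f}^{-1}(\wh{\wt{f}(L)})$, where $\wh{\wt{f}(L)}$ denotes the holomorphic hull in $\bbB^2$. Because $\bbB^2$ is Stein, $\wh{\wt{f}(L)}$ is compact, and properness of $\wt{f}$ makes its preimage compact, forcing $\wh{L}$ to be compact. The main point requiring care is the identification of $\wt{M}$ with $M \times_N \bbB^2$, from which properness of $\wt{f}$ is inherited from properness of $f$ via base change; this is where both the injectivity and surjectivity of $f_*$ enter essentially. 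Once this identification is in hand, the rest is a routine application of Remmert-type arguments.
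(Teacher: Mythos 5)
Your argument is correct. It is worth noting how it sits relative to the paper: the paper's official proof of this proposition is a citation, invoking Katzarkov--Ramachandran's Shafarevich-type theorem (using that $\Gam$ is one-ended and Zariski dense in $\PGL_3(\bbC)$), with the more general result of Eyssidieux--Katzarkov--Pantev--Ramachandran as a backup, whereas the elementary route you take is only sketched by the paper in the remark following the proposition (Remark~\ref{rem:pinf}). Your write-up is essentially a completed version of that sketch, with one genuine difference in how properness of $\wt{f}$ is obtained: the paper's sketch argues via a ``key claim'' that $\pi_M$ restricts to a homeomorphism $\wt{f}^{-1}(\wt{y}_0)\to f^{-1}(y_0)$ and then passes to small closed disks, while you identify $\wt{M}$ with the fiber product $M\times_N\bbB^2$ (connected since $f_*$ is onto, universal since $\ker f_*=1$) and get properness by base change, since $\wt{f}^{-1}(K)$ is closed in the compact set $f^{-1}(\pi(K))\times K$. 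Your formulation is arguably cleaner and makes explicit exactly where injectivity and surjectivity of $f_*$ are used; the trade-off is that the citation route in the paper applies far beyond this situation (any linear fundamental group), while your argument is tailored to the case where one already has the holomorphic map $f$ of Lemma~\ref{lem:HolMap} to a compact ball quotient. The final step, that a proper holomorphic map to the Stein manifold $\bbB^2$ forces holomorphic convexity of the source via hulls, is standard and you prove it correctly.
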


\begin{pf}
This follows directly from work of Katzarkov and Ramachandran \cite[Thm.~1.2]{Katzarkov-Ramachandran}, since $\Gam$ is one-ended and the natural inclusion of $\Gam$ into $\PGL_3(\bbC)$ is Zariski dense. More generally, the Shafarevich conjecture is known for smooth projective varieties with linear fundamental group by more recent work of Eyssidieux, Katzarkov, Pantev, and Ramachandran \cite{EKPR}.
\end{pf}

\begin{rem}\label{rem:pinf}
We sketch an elementary proof of Proposition~\ref{prop:HConvex}. Suppose that $f : M \to N$ is the holomorphic map from Lemma~\ref{lem:HolMap}.  Consider the diagram
\[
\begin{tikzcd}
\wt{M} \arrow{r}{\wt{f}} \arrow{d}[left]{\pi_M} & \bbB^2 \arrow{d}{\pi_N} \\
M \arrow{r}[below]{f} & N
\end{tikzcd}
\]
where $\wt{M}$ is the universal cover of $M$ and $\wt{f}$ is a $f_*$-equivariant lift of $f$ to the universal covers, where $f_*:\pi_1(M)\to \Gamma$ is the induced homomorphism and $\pi(M), \Gamma$ act by covering transformations. Since $\bbB^2$ is holomorphically convex, to prove that $\wt{M}$ is holomorphically convex it suffices to show that $\wt{f}$ is proper. However, properness of $\wt{f}$ is an easy consequence of the fact that $f_*$ is an isomorphism and compactness of $M, N$. In more detail, using these facts it is easy to show:

\medskip

\noindent
\textbf{Key claim:} For every $y_0 \in N$ and $\wt{y}_0 \in \pi_N^{-1}(y_0) \subset \bbB^2$, the map
\[
\pi_M : \wt{f}^{-1}(\wt{y}_0) \lra f^{-1}(y_0)
\]
is a homeomorphism.

\medskip\noindent
This shows that $\wt{f}$-preimages of points are compact. A slight variation of the argument gives that $\wt{f}$-preimages of sufficiently small closed disks are compact, from which properness follows easily.
\end{rem}

\begin{pf}[Proof of Theorem~\ref{thm:pi2=0}]
With the notation established earlier in this section, since $\wt{M}$ is simply connected and $\pi_2(\wt{M}) = 0$, standard topological arguments show that $M$ is aspherical if and only if $H_3(\wt{M},\bbZ ) = 0$. Consider the Cartan--Remmert reduction $p:\wt{M}\to Y$ of the holomorphically convex space $\wt{M}$. \emph{A priori}, the Stein space $Y$ could be one- or two-dimensional. Since $\wt{f}$ is surjective and factors through $p$, the composition
\begin{equation}\label{eq:CartRem}
 \wt{M}\to Y \to \bbB^2
\end{equation}
implies that $Y$ admits a surjective map to $\bbB^2$, hence $Y$ is two-dimensional.

This means that $p:\wt{M}\to Y$ contracts a collection of disjoint compact, connected complex curves in $\wt{M}$ (the maximal compact, connected subvarieties of $\wt{M}$) to a discrete set of points in $Y$. From this it follows that $H_i(\wt{M},\bbZ) \cong H_i(Y,\bbZ)$ for $i>2$.  Since $Y$ is a Stein space, the latter cohomology groups vanish (see \cite{Andreotti-Narasimhan}, p.~500 and 508-509 for more details). In particular $H_3(\wt{M},\bbZ)=0$ as desired. It follows that the map $f:M\to N$ is a homotopy equivalence, and consequently it follows easily that $f$ is biholomorphic; see \cite[Thm.~8]{Siu}.
\end{pf}

\begin{rem}\label{rem:pinf2}
The hypothesis of Theorem~\ref{thm:pi2=0} can be weakened to only assume that $\pi_2(M)$ is finite, since Gurjar proved that $\pi_2(M)$ is torsion-free when $\wt{M}$ is holomorphically convex \cite[Thm.~1]{Gurjar}.
\end{rem}

\begin{rem}\label{rem:delzant}
The proof of Theorem \ref{thm:pi2=0} suggests that there should be wide class of algebraic surfaces $X$ for which $\pi_2(X) = 0$ implies that $X$ is aspherical. A careful look at the proof shows that this is the case under the hypothesis that the universal cover $\wt{X}$ is holomorphically convex and its Cartan--Remmert reduction is two-dimensional. It then looks like the Shafarevich conjecture comes into play, and perhaps finding non-aspherical surfaces $X$ with $\pi_2(X) = 0$ would produce a counterexample to the Shafarevich conjecture. However this is not the case. Pierre Py informed us of the following unpublished result of Thomas Delzant, and explained the simple and elegant proof.
\end{rem}

\begin{thm}[Delzant]\label{thm:delzant}
Let $X$ be a compact K\"ahler surface and assume that $\pi_1(X)$ is not commensurable to a surface group. Then $X$ is aspherical if and only if $\pi_2(X) = 0$.
\end{thm}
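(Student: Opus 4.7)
The only substantive direction is the reverse: given $\pi_2(X) = 0$ and the noncommensurability hypothesis, deduce asphericity. The plan is to show that the universal cover $\wt{X}$ is contractible, which by Hurewicz and Whitehead reduces (since $\wt{X}$ is simply connected with $\pi_2(\wt{X}) = 0$) to proving $H_3(\wt{X}, \bbZ) = H_4(\wt{X}, \bbZ) = 0$.

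First I would eliminate the possibility that $\pi_1(X)$ is finite: otherwise $\wt{X}$ would be a compact simply-connected K\"ahler surface with $\pi_2 = 0$, forcing $b_2(\wt{X}) = 0$, which contradicts nontriviality of the K\"ahler class in $H^2$. So $\wt{X}$ is non-compact, making $H_4(\wt{X}, \bbZ) = 0$ automatic. Poincar\'e duality for the oriented non-compact $4$-manifold $\wt{X}$ identifies $H_3(\wt{X}, \bbZ) \cong H^1_c(\wt{X}, \bbZ)$; combined with $H^1(\wt{X}, \bbZ) = 0$ and the standard exact sequence relating compactly supported and ordinary cohomology to the ends, this pins the group down to $\bbZ^{\ep - 1}$, where $\ep$ denotes the number of ends of $\pi_1(X)$. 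The entire problem thus reduces to proving that $\pi_1(X)$ is one-ended.

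The main obstacle, and the place where both the K\"ahler hypothesis and the noncommensurability assumption are actually used, is excluding $\ep = 2$ and $\ep = \infty$. The case $\ep = 2$ is quick: then $\pi_1(X)$ is virtually $\bbZ$, so a finite \'etale cover $X' \to X$ would be compact K\"ahler with $\pi_1(X') \cong \bbZ$, giving $b_1(X') = 1$ and violating the parity of the first Betti number for compact K\"ahler manifolds. For $\ep = \infty$, I would appeal to Gromov's theorem on K\"ahler groups acting on trees, as refined by Delzant, Napier--Ramachandran, and others: combined with Stallings' splitting theorem, one obtains a surjective holomorphic map $f : X \to C$ with connected fibers onto a hyperbolic Riemann $2$-orbifold, together with an induced exact sequence $\pi_1(F_{\mathrm{gen}}) \to \pi_1(X) \twoheadrightarrow \pi_1^{\mathrm{orb}}(C) \to 1$ with $\pi_1^{\mathrm{orb}}(C)$ infinite. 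Let $K$ denote the kernel of $f_*$. If $K$ is infinite, then $\pi_1(X)$ is an extension of an infinite group by an infinite normal subgroup, hence one-ended by a standard Houghton-type argument, contradicting $\ep = \infty$. If $K$ is finite, then $\pi_1(X)$ is commensurable with $\pi_1^{\mathrm{orb}}(C)$ and hence (passing to a torsion-free finite-index subgroup) with a genuine surface group, contradicting the hypothesis. Either horn closes the argument, and the appeal to this K\"ahler-group structural theorem is precisely what replaces the Shafarevich-style holomorphic-convexity input used in Theorem~\ref{thm:pi2=0}, yielding an argument that is both conceptually cleaner and more general.
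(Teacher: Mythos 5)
Your argument is correct in outline, but it reaches the conclusion by a genuinely different route than the paper's. Both proofs share the same opening reduction: by Hurewicz--Whitehead and Poincar\'e duality on the open four-manifold $\wt{X}$, asphericity is equivalent (given $\pi_2(X)=0$ and $\pi_1(X)$ infinite) to the vanishing of $H^1_c(\wt{X})$. At that point the paper proceeds analytically: nonvanishing of $H^1_c(\wt{X},\bbR)$ is promoted to nonvanishing of the first $L^2$ cohomology of $\wt{X}$, and Gromov's theorem on K\"ahler groups with nonzero first $L^2$ Betti number yields commensurability with a surface group, contradicting the hypothesis. You instead interpret $H^1_c(\wt{X},\bbZ)$, using $H^1(\wt{X})=0$, as reduced end cohomology, so everything reduces to one-endedness of $\pi_1(X)$: two ends are excluded by parity of $b_1$ in a finite cover, and infinitely many ends by Stallings' splitting theorem combined with the fibration theorem for K\"ahler groups acting on trees with finite edge stabilizers (Gromov, Arapura--Bressler--Ramachandran, Napier--Ramachandran, Delzant--Gromov). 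The paper's route is shorter and lands exactly on the commensurability statement that the hypothesis forbids; your route is topologically more elementary but leans on the heavier structure theory of infinitely-ended K\"ahler groups, and in exchange it shows the noncommensurability hypothesis is barely used: in your case analysis it enters only in the finite-kernel horn, which could just as well be closed by observing that killing a finite normal subgroup does not change the number of ends, so your argument in effect proves the stronger statement that any compact K\"ahler surface with infinite fundamental group and $\pi_2=0$ is aspherical.

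Three points of precision. First, the identification of $H_3(\wt{X},\bbZ)$ with $\bbZ^{\epsilon-1}$ is literally correct only for finitely many ends (for infinitely many it has infinite rank), though all you need is that it vanishes exactly when there is one end. Second, the ``Houghton-type'' fact that an extension of an infinite group by an infinite normal subgroup is one-ended requires the normal subgroup to be finitely generated (normal closures in free groups give counterexamples otherwise); this holds in your situation because $K$ is the image of the fundamental group of the compact generic fiber, but it should be stated. Third, in the finite-kernel horn, ``commensurable'' needs a word: $\pi_1(X)$ is then finite-by-(virtually a surface group), and literal commensurability requires passing to a finite-index subgroup on which the extension by the finite kernel splits -- or one can again simply note that such a group is one-ended, contradicting the infinitely-many-ends assumption directly.
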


\begin{proof}[Sketch of Proof]
Let $\wt{X}$ be the universal cover of $X$. If
\[
\pi_2(X) = H_2(\wt{X},\bbZ) = 0,
\]
then, applying the Hurewicz theorem, $X$ is aspherical if and only if $H_3(\wt{X,}\bbZ)$ is zero. Since $\wt{X}$ is homotopically a $3$-complex, this is equivalent to triviality of $H_3(\wt{X},\bbR)$, so suppose $H_3(\wt{X},\bbR) \ne 0$. By duality, the compactly supported cohomology $H^1_c(\wt{X},\bbR)$ is nontrivial, which in turn, implies that the one-dimensional $L^2$ cohomology $H^1_{(2)}(\wt{X})$ is nontrivial. By a celebrated theorem of Gromov \cite{GromovL2}, this implies that $\pi_1(X)$ is commensurable with a surface group.
\end{proof}

\section{Realizing central extensions}\label{sec:Realize}

Fix $d \ge 2$ and suppose that $N = \Gam \bs \bbB^2$ is a smooth compact ball quotient. Our goal is to construct a smooth projective surface $M$ with fundamental group that fits into a central exact sequence:
\begin{equation}\label{eq:dCentral}
1 \lra \bbZ / d \lra \pi_1(M) \lra \Gam \lra 1
\end{equation}
The known constructions all appear to be roughly equivalent to an ``extraction of roots'', where $M$ is a branched cover of $N$. We now explain one variant  of \lq\lq extraction of roots", sketched in \cite[Ex.~8.15]{ABCKT}. We begin by describing some auxiliary objects and establishing notation.

\subsection{The projective space of a direct sum}\label{ssec:Psum}

If $V$ is a complex vector space and $v\in V, v\ne 0$, $[v]\in \bbP(V)$ denotes the line determined by $v$.  If $V_1, V_2$ are complex vector spaces, $v_1\in V_1$, $v_2\in V_2$, $v_1$ and $v_2$ not both $0$, we write simply $[v_1,v_2]$  for the element $[(v_1,v_2)]\in\bbP(V_1\oplus V_2)$. The case of interest for us will be $V_1 \cong V_2 \cong \bbC^3$.

Consider $\bbP(V_1)$ and $\bbP(V_2)$ as subsets of $\bbP(V_1\oplus V_2)$ in the standard way
\begin{align*}
[v_1] &\mapsto [v_1, 0] & [v_2] &\mapsto [0,v_2]
\end{align*}
and the general element of $\bbP(V_1\oplus V_2)$ is therefore of the form $[v_1,v_2]$ with $v_1\in V_1, v_2\in V_2$, not both $0$, thus presenting $\bbP(V_1 \oplus V_2)$ as a \lq\lq join" of $\bbP(V_1)$ and $\bbP(V_2)$.

If we set
\[
U = \{[v_1, v_2] \in \bbP(V_1 \oplus V_2)~:~v_1\neq 0\ \text{and}\  v_2 \neq 0\},
\]
then we have a decomposition
\[
\bbP(V_1 \oplus V_2) = U \sqcup \bbP(V_1) \sqcup  \bbP(V_2)
\]
that we can visualize as in Figure~\ref{fig:UP1P2}.
\begin{figure}[h]
\centering
\begin{tikzpicture}
\draw[very thick, blue] (-2,2) -- (2,-2);
\draw[white, fill=white] (0,0) circle (0.25cm);
\draw[very thick, blue] (-2,-1) -- (4,2);
\draw[very thick, orange] (2,1) -- (5/4,-5/4);
\node[blue, right] at (4,2) {$\bbP(V_1)$};
\node[blue, right] at (2,-2) {$\bbP(V_2)$};
\draw[orange, fill=orange] (13/8,-1/8) circle (0.05cm) node[right] {$[v_1, v_2]$} node[below left] {$U$};
\node[blue, above left] at (2,1) {$[v_1]$};
\draw[blue, fill=white] (2,1) circle (0.07cm);
\node[blue, below left] at (5/4,-5/4) {$[v_2]$};
\draw[blue, fill=white] (5/4,-5/4) circle (0.07cm);
\end{tikzpicture}
\caption{$\bbP(V_1 \oplus V_2) = U \sqcup \bbP(V_1) \sqcup \bbP(V_2)$}\label{fig:UP1P2}
\end{figure}

This determines a fibration
\begin{equation}\label{eq:Ufibration}
\begin{tikzcd}[column sep = small]
\bbC^* \arrow{r} & U \arrow{d} & {[v_1, v_2]} \arrow[mapsto]{d} \\
 & \bbP(V_1) \times \bbP(V_2) & {\left([v_1], [v_2] \right)}
\end{tikzcd}
\end{equation}
where the fiber over $([v_1], [v_2])$, which we denote by $\mathopen< [v_1],[v_2] \mathclose>$, is
\begin{equation}\label{eq:Ufiber}
\mathopen< [v_1],[v_2] \mathclose> = \{[\lam v_1, \mu v_2]~:~\lam, \mu \in \bbC^*\} 
\end{equation}
in other words, the projective line joining the points $[v_1,0], [0,v_2]$, with these two points removed. This is isomorphic to $ \left(\bbC^* \times \bbC^*\right) / \textrm{diagonal}$ which is in turn is isomorphic to $ \bbC^*$, but in more than one way, since $0$ and $\infty$ can be interchanged.

To describe this fibration in terms of familiar line bundles, let
\[
\begin{tikzcd}
L_j \arrow[hookrightarrow]{r} \arrow{d} & \bbP(V_j) \times V_j \\
\bbP(V_j)
\end{tikzcd}
\]
be the tautological line bundles, $j = 1,2$. Now, let $\bbL_j$ be the pullback of $L_j$ to $\bbP(V_1) \times \bbP(V_2)$ under the projection to $\bbP(V_j)$. Letting $L^{-1}$ denote the inverse of a line bundle and $L^*$ the line bundle minus its zero section, we have identifications
\begin{equation}\label{eq:Utaut}
U = \left( \bbL_1 \otimes \bbL_2^{-1} \right)^* = \left( \bbL_1^{-1} \otimes \bbL_2 \right)^*,
\end{equation}
which one can see directly in coordinates from the calculation
\[
\Big[\lam v_1, \mu v_2 \Big] = \Big[v_1, \frac{\mu}{\lam} v_2 \Big] = \Big[\frac{\lam}{\mu} v_1, v_2 \Big]
\]
for $[v_j] \in \bbP(V_j)$ and $\lam, \mu \neq 0$. Specifically, the first equality identifies the point $[\lambda v_1, \mu v_2]$  over $([v_1],[v_2])$ with the graph of the linear map
\[
\left(v_1 \mapsto \frac{\mu}{\lambda} v_2\right) \in \Hom(\bbL_1,\bbL_2) \cong \bbL_1^{-1}\otimes \bbL_2,
\]
and similarly with the second identification.

\subsection{Finite maps to $\bbP^N$}\label{ssec:FiniteMap}

This section refines the well-known connection between very ample line bundles and maps to projective spaces. Critical for our purposes is that one can produce a \emph{finite} map, and we include the proof of this fact for completeness.

\begin{lem}\label{lem:FiniteMap}
Suppose that $X$ is an $n$-dimensional smooth projective variety and $\eta \in H^2(X, \bbZ)$ is the first Chern class of a very ample line bundle on $X$. Then there exists a finite map
\[
\al : X \lra \bbP^n
\]
so that $\al^*([H]) = \eta$, where $[H] \in H^2(\bbP^n, \bbZ)$ is the Poincar\'e dual to a hyperplane.
\end{lem}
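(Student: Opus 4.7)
The plan is to use the standard correspondence between very ample line bundles and projective embeddings, and then project generically to cut down the dimension of the target.

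First, since $L$ is very ample with $c_1(L) = \eta$, the complete linear system $|L|$ gives a closed embedding $\phi_L : X \hookrightarrow \bbP^N$, where $N = h^0(X, L) - 1$, and by construction $\phi_L^*\calO_{\bbP^N}(1) = L$, so $\phi_L^*([H]) = \eta$. If $N = n$ we are done. Otherwise, the strategy is to compose $\phi_L$ with a generic linear projection $\pi_\Lambda : \bbP^N \setminus \Lambda \to \bbP^n$, where $\Lambda \subset \bbP^N$ is a linear subspace of dimension $N - n - 1$. Provided $\Lambda$ is disjoint from $\phi_L(X)$, the composition $\alpha = \pi_\Lambda \circ \phi_L : X \to \bbP^n$ is a morphism and satisfies $\alpha^*\calO_{\bbP^n}(1) = L$, hence $\alpha^*([H]) = \eta$.

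To see that a generic $\Lambda$ of the required dimension avoids $\phi_L(X)$, one runs a standard incidence calculation in the Grassmannian $\mathrm{Gr}(N-n, N+1)$ of $(N-n-1)$-planes in $\bbP^N$, whose dimension is $(N-n)(n+1)$. For a fixed point $x \in X$, the locus of such planes passing through $x$ has dimension $(N-n-1)(n+1)$ (i.e.\ codimension $n+1$). The incidence variety $I = \{(x,\Lambda) : x \in \Lambda \cap \phi_L(X)\}$ therefore has dimension $n + (N-n-1)(n+1) = (N-n)(n+1) - 1$, so its image in the Grassmannian is a proper subvariety, and a generic $\Lambda$ avoids $\phi_L(X)$ as required.

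It remains to argue that $\alpha$ is finite, which is the only step requiring a genuine idea beyond projection. The map $\alpha$ is automatically proper since $X$ is projective, so it suffices to rule out positive-dimensional fibers. This follows from the general fact that a proper morphism $\alpha : X \to Y$ between Noetherian schemes is finite whenever $\alpha^*A$ is ample for some ample line bundle $A$ on $Y$: if some irreducible curve $C \subset X$ were contracted to a point, then intersection with $\alpha^*A$ would give $\alpha^*A \cdot C = A \cdot \alpha_* C = 0$, contradicting ampleness of $\alpha^*A = L$. I expect this last step to be the main conceptual point, though it is entirely standard (cf.\ Hartshorne, Ex.~III.5.7); everything else is a dimension count plus the defining property of very ampleness.
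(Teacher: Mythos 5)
Your proposal is correct and follows essentially the same route as the paper: embed $X$ in a large projective space via the very ample bundle and then apply a generic linear projection down to $\bbP^n$, checking that the hyperplane class pulls back to $\eta$. The only cosmetic differences are that you project once from a generic $(N-n-1)$-plane disjoint from $X$ (with finiteness justified via ampleness of $\al^*\calO_{\bbP^n}(1) = L$ and the contracted-curve argument) whereas the paper iterates projections from points, citing the standard finiteness of projection from a center disjoint from the variety; both verifications are sound.
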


\begin{pf}
Let $\al_0 : X \to \bbP^N$, $N > n$, be an embedding of $X$ in $\bbP^N$ determined by a basis for the space of global sections of the very ample line bundle. Then $\al_0^*([H_0]) = \eta$, where $[H_0] \in H^2(\bbP^N, \bbZ)$ is the hyperplane class. See \cite[\S 1.4]{GriffithsHarris} for a standard reference.

Now, choose $p_0 \in \bbP^N$ so that $p_0 \notin \al_0(X)$ and the generic hyperplane $H_0 \subset \bbP^N$ though $p_0$ is transverse to $\al_0(X)$. Then $\al_0^{-1}(H_0)$ is Poincar\'e dual to $\al_0^*([H_0]) = \eta$. Let
\[
\pi_0 : \bbP^N \ssm \{p_0\} \lra \bbP^{N-1}
\]
be projection centered at $p_0$ and $\al_1 = \pi_0 \circ \al_0$. Then, by construction, $\al_1$ is a finite map. Since $\pi_0^{-1}$ determines a correspondence between hyperplanes in $\bbP^{N-1}$ and hyperplanes in $\bbP^N$ through $p_0$, we have that
\[
\al_1^{-1}(H_1) = \al_0^{-1}(H_0)
\]
for all hyperplanes $H_1 \subset \bbP^{N-1}$, where $H_0 = \pi_0^{-1}(H_1)$. Thus, for a general $H_1 \subset \bbP^{N-1}$, we see that $\al_1^{-1}(H_1)$ is Poincar\'e dual to $\eta$, i.e., $\al_1^*([H_1]) = \eta$.

We now iterate this procedure to obtain $\al_j : X \to \bbP^{N-j}$. As long as $N-j > n$, we can find a point $p_j \notin \al_j(X)$ that allows us to construct $\al_{j+1}$. Note that the composition of finite maps is finite, so each $\al_j$ is a finite map. This ends with $\al = \al_{N-n} : X \to \bbP^n$, which is the desired map.
\end{pf}

\subsection{The construction of $M$}\label{ssec:Construct}

We now build the smooth projective surface $M$ described at the beginning of this section. Let $N = \Gam \bs \bbB^2$ be a smooth compact ball quotient. Fix a very ample line bundle on $N$, e.g., an appropriate power of the canonical bundle, and let
\[
\al : N \to \bbP^2
\]
be the map provided by Lemma~\ref{lem:FiniteMap}. If $H$ denotes a generic hyperplane on $\bbP^2$, then $\al^*([H])$ is the first Chern class of our chosen very ample line bundle. Finally, let
\[
\begin{tikzcd}[column sep = small]
\bbC^* \arrow{r} & Z \arrow{d} \\ & N
\end{tikzcd}
\]
denote the $\bbC^*$ bundle over $N$ obtained by removing the zero section from the total space of our chosen very ample line bundle, and set $\wt{\Gam} = \pi_1(Z)$. Note that the element of $H^2(\Gam, \bbZ) \cong H^2(N, \bbZ)$ associated with realizing $\wt{\Gam}$ as a central extension of $\Gam$ by $\bbZ$ is the same as the first Chern class of our line bundle.

\medskip

Fix $d \ge 2$, and let $\beta : \bbP^2 \to \bbP^2$ be the $d^{th}$ power map
\begin{equation}
\label{eq:betadef}
[x_0 : x_1 : x_2] \mapsto [x_0^d : x_1^d : x_2^d].
\end{equation}
Note that $\beta^*([H]) = d[H]$, and that $\beta$ is a $(\bbZ / d)^2$ branched cover. From \S\ref{ssec:Psum}, we have the fibration
\begin{equation}\label{eq:Vbundle}
\begin{tikzcd}[column sep = small]
V \arrow[dashed]{d} \arrow[dashed]{rr}{F} & & U \arrow{d} \arrow[hookrightarrow]{r} & \bbP^5 \\
N \times \bbP^2 \arrow{rr}{\al \times \beta} & & \bbP^2 \times \bbP^2 &
\end{tikzcd}
\end{equation}
where $V$ is the pullback to $N \times \bbP^2$ of the $\bbC^*$ bundle $U$ over $\bbP^2 \times \bbP^2$. More precisely,
\begin{equation}
\label{eq:Upullback}
V = \left\{(x, y, u)~:~x \in N, y \in \bbP^2, u \in \mathopen<\al(x) , \beta(y) \mathclose>\right\},
\end{equation}
where $\mathopen<\al(x) , \beta(y) \mathclose>$ is as defined in Equation~\eqref{eq:Ufiber} and 
\begin{equation}
\label{eq:Fmap}
F(x,y,u) = u
\end{equation}
for $(x,y,u)\in V$.

The following theorem is a very special case of a theorem of Goresky and McPherson \cite[Thm.\ II.1.1]{Goresky-MacPherson}. 
\begin{thm}\label{thm:Morse}
Let $V$ be a smooth algebraic variety and $f : V \to \bbP^N$ be an algebraic map with finite fibers. If $\Lambda \subset \bbP^N$ is a generic linear subspace, then $\pi_i(V, f^{-1}(\Lambda)) = 0$ for $i \le \dim(\Lambda)$.
\end{thm}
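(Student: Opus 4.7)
The plan is to prove this by a Lefschetz-type induction on the codimension of $\Lambda$. First, I would choose generic hyperplanes $H_1, \ldots, H_c \subset \bbP^N$, where $c = N - \dim(\Lambda)$, so that $\Lambda = H_1 \cap \cdots \cap H_c$. Setting $\Lambda_i = H_1 \cap \cdots \cap H_i$, so that $\Lambda_0 = \bbP^N$ and $\Lambda_c = \Lambda$, Bertini's theorem applied iteratively to the finite maps $f|_{f^{-1}(\Lambda_{i-1})} : f^{-1}(\Lambda_{i-1}) \to \Lambda_{i-1}$ ensures that each preimage $f^{-1}(\Lambda_i)$ is a smooth subvariety of $V$. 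It will then suffice to show that each successive pair $(f^{-1}(\Lambda_{i-1}), f^{-1}(\Lambda_i))$ is $(N-i)$-connected, as iterating the long exact sequence of a triple then yields that $(V, f^{-1}(\Lambda))$ is $\min_i(N-i) = \dim(\Lambda)$-connected, which is the claim.

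Each inductive step reduces to a Lefschetz hyperplane theorem for finite maps: for any smooth algebraic variety $W$ and finite map $g : W \to \bbP^M$, the pair $(W, g^{-1}(H))$ is $(M-1)$-connected for a generic hyperplane $H \subset \bbP^M$. I would prove this by noting that $\bbP^M \ssm H \cong \bbA^M$ is affine, and that since $g$ is quasi-finite (hence an affine morphism), the complement $W \ssm g^{-1}(H) = g^{-1}(\bbA^M)$ is a smooth affine variety. The theorem of Andreotti--Frankel then gives that this variety has the homotopy type of a CW complex of real dimension at most its complex dimension, and a Lefschetz duality argument on the smooth manifold $W$ (working with compactly-supported cohomology to accommodate potential non-compactness of $W$) converts this bound into the desired $(M-1)$-connectivity of the pair.

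The hard part, and the reason Goresky and McPherson develop the full machinery of stratified Morse theory, is carrying out this argument uniformly in one stroke, without relying either on iterated Bertini smoothness of the successive preimages or on Lefschetz duality in the potentially non-compact setting. Their approach instead equips $\bbP^N$ with a Morse function measuring projective distance to $\Lambda$ and pulls it back to $V$; the finite-fiber hypothesis on $f$ then forces the Morse indices of the critical points of this function on $V$ to be bounded below by $\dim(V) - \dim(\Lambda)$, directly expressing $V$ as built from $f^{-1}(\Lambda)$ by attaching cells of real dimension strictly greater than $\dim(\Lambda)$. This \emph{normal Morse data} estimate is the technical heart of their proof.
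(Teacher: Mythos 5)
The paper itself gives no proof of this statement: it is quoted verbatim as a special case of Goresky--MacPherson's non-proper Lefschetz theorem \cite[Thm.\ II.1.1]{Goresky-MacPherson}, so any argument you supply is necessarily a different route, and the question is whether yours works. It does not, because of one fatal step: the claim that $g$ quasi-finite implies $g$ affine, so that $W \ssm g^{-1}(H) = g^{-1}(\bbA^M)$ is a smooth \emph{affine} variety to which Andreotti--Frankel applies. Quasi-finite morphisms are not affine in general (the open immersion $\bbA^2 \ssm \{0\} \hookrightarrow \bbA^2$ is quasi-finite), and preimages of affine opens under quasi-finite maps are in general only quasi-affine; smooth quasi-affine varieties badly violate the Andreotti--Frankel homotopy-dimension bound, e.g.\ $\bbA^n \ssm \{0\}$ is homotopy equivalent to $S^{2n-1}$. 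This is not a removable technicality in the present paper: in the application in \S\ref{sec:Realize} the map $F : V \to \bbP^5$ has image the open set $U$ whose complement $\bbP(V_1) \sqcup \bbP(V_2)$ has codimension $3$, so $F^{-1}(\bbA^5)$ is a finite cover of $\bbA^5$ minus a codimension-$3$ subvariety and is nowhere near affine. Your argument, even if completed, would prove the theorem only for proper finite maps, whereas the entire reason the paper must invoke Goresky--MacPherson is that $f$ is only assumed to have finite fibers and is genuinely non-proper in the intended application.

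Two further points. First, Andreotti--Frankel combined with Lefschetz duality yields vanishing of relative \emph{homology} $H_i(W, g^{-1}(H))$, not of the homotopy groups asserted in the statement; upgrading to $\pi_i$ requires either the Morse-theoretic cell-attachment argument or a relative Hurewicz argument with separate control of $\pi_1$, which your sketch does not address. Second, your inductive claim that $(W, g^{-1}(H))$ is $(M-1)$-connected is false unless $\dim W = M$ (take $W$ a non-simply-connected smooth affine curve quasi-finite over $\bbP^2$ and $H$ a generic line); the correct bound is $\dim W - 1$, and correspondingly the bound in the theorem is really $\dim V - \operatorname{codim}\Lambda$, which coincides with $\dim \Lambda$ in the paper's application only because $\dim V = N = 5$ there. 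The purely formal parts of your plan---writing $\Lambda$ as an intersection of generic hyperplanes, Bertini smoothness of the successive preimages, and the exact sequence of a triple to concatenate connectivity statements---are fine; what is missing is exactly the stratified-Morse-theoretic control of non-properness that you describe, correctly, in your final paragraph but do not supply, and which is the actual content of the cited theorem.
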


We will apply Theorem~\ref{thm:Morse} to our $F : V \to U \subset \bbP^5$ defined in Equation~\eqref{eq:Vbundle} and $\Lambda\subset U$ a generic $\bbP^2$. First, observe that any $2$-dimensional projective subspace
\[
\Lambda \subset U\subset \bbP^5
\]
is disjoint from $\bbP(V_1)$ and $\bbP(V_2)$. Consequently, it is the graph of a linear isomorphism $A:V_1\to V_2$, that is,
\[
\Lambda = \left\{[v_1, A v_1]~:~v_1 \in V_1\right\}.
\]
We now describe $F^{-1}(\Lambda)$ more precisely.

To find $F^{-1}(\Lambda)$, note that for any $(x,y)\in N\times \bbP^2$, $\mathopen<\alpha(x),\beta(y)\mathclose> \cong \bbC^*$ can meet $\Lambda$ in at most one point. Indeed, otherwise $\mathopen<\alpha(x),\beta(y) \mathclose>\subset \Lambda$, and $\Lambda$ would then contain the points in each of $\bbP(V_1)$ and $\bbP(V_2)$ contained in the closure of $\mathopen<\alpha(x),\beta(y) \mathclose>$, contradicting the fact that $\Lambda\subset U$.

Moreover, $\mathopen<\alpha(x),\beta(y) \mathopen>\,\cap\, \Lambda \neq \varnothing$ if and only if there exist $\widetilde{\alpha(x)}\in V_1$ and $\widetilde{\beta(y)}\in V_2$ in the lines $\alpha(x)\in \bbP(V_1)$ and $\beta(y)\in\bbP(V_2)$, respectively, such that $\widetilde{\beta(y)} = A \widetilde{\alpha(x)}$. This is the same as saying that $\beta(y) = A \alpha(x)$, where $A:\bbP(V_1)\mapsto \bbP(V_2)$ is the map of projective spaces induced by $A$. This is moreover equivalent to saying $\alpha(x) = A^{-1} \beta(y)$. In other words, if we define
\begin{align}\label{eq:Mdef}
M &= \left\{(x, y) \in N \times \bbP^2~:~\beta(y) = A \al(x) \right\} \\
&= \left\{(x,y) \in N \times \bbP^2~:~\al(x) = A^{-1} \beta(y) \right\} \nonumber
\end{align}
we have seen that $\mathopen<\alpha(x),\beta(y) \mathclose> \cap \Lambda \neq\varnothing$ if and only if $(x,y)\in M$. Conversely, if $(x,y)\in M$ then there exits a unique point $u(x,y)$ contained in $\mathopen<\alpha(x),\beta(y)\mathclose> \cap \Lambda$.

Therefore,
\[
F^{-1}(\Lambda) = \{ (x,y,u(x,y))\in V ~:~ (x,y)\in M\} = \wh{M}
\]
and $F^{-1}(\Lambda)\subset V$. From now on we denote $F^{-1}(\Lam)$ by $\wh{M}$, which is the image of a section over $M$ of the $\bbC^*$-bundle $V \to N\times \bbP^2$ of Equation~\eqref{eq:Vbundle}. In particular, $\wh{M}$ is biholomorphic to $M$. In terms of the diagram in Equation~\eqref{eq:Vbundle}, $M$ and $\wh{M}$ fit as
\begin{equation}\label{eq:Finverse}
\begin{tikzcd}[column sep = small]
\wh{M}=F^{-1}(\Lambda) \arrow{d} \arrow[hook]{r} & V \arrow[dashed]{d} \arrow{rr}{F} & & U \arrow{d} \arrow[hookrightarrow]{r} & \bbP^5 \\
M \arrow[hook]{r} & N \times \bbP^2 \arrow{rr}{\al \times \beta} & & \bbP^2 \times \bbP^2 &
\end{tikzcd}
\end{equation}
where the left-most vertical arrow is a biholomorphism. Note that $\Lambda$, $M$, and $\wh{M}$ depend on the linear isomorphism $A:V_1\to V_2$.

\begin{lem}\label{lem:Mfiber}
With notation as above, given a generic linear isomorphism $A:V_1\to V_2$, the complex surfaces $\wh{M} = F^{-1}(\Lambda)\subset V$ and its projection $M\subset N\times \bbP^2$ defined in Equation~\eqref{eq:Mdef} are biholomorphic smooth projective varieties. Further, $M$ can be described as a fiber product by either of the diagrams
\[
\begin{tikzcd}
M \arrow{r} \arrow{d} & \bbP(V_2) \arrow{d}{A^{-1} \beta} & & M \arrow{r} \arrow{d} & \bbP(V_2) \arrow{d}{\beta} \\
N \arrow{r}{\al} & \bbP(V_1) & & N \arrow{r}{A \al} & \bbP(V_2)
\end{tikzcd}
\]
in which the vertical maps are $(\bbZ/d)^2$-branched covers.
\end{lem}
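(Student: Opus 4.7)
The plan is to check the three assertions in turn---the biholomorphism $\wh M \cong M$ (together with smoothness and projectivity), the two fiber product descriptions, and the $(\bbZ/d)^2$-branched cover property---using the setup directly, with smoothness being the only nontrivial point.

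First, the biholomorphism has essentially been established in the paragraphs preceding the lemma: $\wh M = F^{-1}(\Lambda)$ was exhibited as the image of a holomorphic section over $M$ of the $\bbC^*$-bundle $V \to N \times \bbP^2$, namely the section sending $(x,y) \in M$ to the unique point $u(x,y) \in \mathopen< \al(x), \beta(y) \mathclose> \cap \Lambda$. That $M$ is a closed subvariety of the projective variety $N \times \bbP^2$ is immediate from Equation~\eqref{eq:Mdef}, giving projectivity. The two fiber product diagrams are then direct rewritings of the defining equation $\beta(y) = A \al(x)$: the first diagram views this as an equality in $\bbP(V_1)$ after applying $A^{-1}$, while the second views it as an equality in $\bbP(V_2)$, and in both cases commutativity is tautological.

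Next, for the $(\bbZ/d)^2$-Galois structure, the coordinatewise $d$th-power map $\beta$ from Equation~\eqref{eq:betadef} is the quotient of $\bbP^2$ by the action of $\mu_d^3/\mu_d \cong (\bbZ/d)^2$ scaling homogeneous coordinates by $d$th roots of unity (the diagonal subgroup acts trivially on $\bbP^2$). Hence $\beta$ is a $(\bbZ/d)^2$-Galois branched cover, ramified exactly along the three coordinate lines $\{x_i = 0\}$. Since the pullback of a Galois cover along any morphism is a Galois cover with the same group, both projections $M \to N$ in the two fiber product diagrams are $(\bbZ/d)^2$-branched covers, with branch locus the preimage of $\{x_0 x_1 x_2 = 0\}$ under $\al$, respectively $A \al$.

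Smoothness is the only substantive step, and is where I expect the real work. I would argue it via a Bertini-type genericity statement applied to the finite algebraic map $F : V \to U \subset \bbP^5$ of quasi-projective fivefolds: for a generic $2$-plane $\Lambda \subset U$---equivalently, for generic $A \in \Hom(V_1,V_2)$---the scheme-theoretic preimage $F^{-1}(\Lambda) = \wh M$ is smooth. In the fiber product picture, this reduces to checking that for generic $A$ the three divisors $(A\al)^{-1}(\{x_i = 0\}) \subset N$ form a simple normal crossing divisor; once this is in place, smoothness of $M$ at each boundary stratum follows from the local model $(w_1,w_2) \mapsto (w_1^d, w_2^d)$ for the $(\bbZ/d)^2$-quotient map, whose source is already smooth. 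The delicate point is accounting for the (fixed) ramification locus of $\al$ when verifying normal crossings, but the abundance of choices for $A$---a $9$-dimensional group acting to move three fixed linear hyperplanes in $\bbP(V_2)$---should be more than enough to push through a standard moving/Kleiman-transversality argument.
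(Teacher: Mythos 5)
Your proposal matches the paper on most of the lemma: the biholomorphism $\wh{M} \cong M$ via the section $(x,y)\mapsto u(x,y)$, projectivity of $M$ as a closed subvariety of $N\times\bbP^2$, the two fiber squares as rewritings of $\beta(y)=A\al(x)$, and the transfer of the $(\bbZ/d)^2$-cover structure from $\beta$ to the vertical maps are all exactly as in the paper's proof. The difference is the smoothness step. The paper never invokes Bertini or normal crossings: it checks directly that $(A,x,y)\mapsto (x,A\al(x),y)$ is a submersion from $\GL_3(\bbC)\times N\times\bbP^2$ to $N\times\bbP^2\times\bbP^2$ and applies parametric transversality (Sard) to conclude that for almost every $A$ the map $f_A(x,y)=(x,A\al(x),y)$ is transverse to $Y=\{(x,\beta(y),y)\}$, so $M=f_A^{-1}(Y)$ is smooth; no analysis of the ramification of $\al$ or the branch triangle of $\beta$ is needed. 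Your primary route---Bertini for a generic codimension-three linear section of the morphism $F:V\to\bbP^5$ from the smooth fivefold $V$---is a valid alternative and is really the same generic-smoothness principle in different packaging (one should just record that a generic $2$-plane in $\bbP^5$ is disjoint from $\bbP(V_1)\sqcup\bbP(V_2)$, hence lies in $U$ and corresponds to a generic $A$). Two caveats: that Bertini statement applies directly and does not ``reduce to'' your simple normal crossings criterion, so the sentence linking the two is misleading; and the SNC-plus-local-model variant, while correct in principle (with ``SNC'' understood to include reducedness of the pulled-back lines), is left as a sketch exactly at the point you flag---smoothness and reducedness of $\al^{-1}$ of the moved lines and transversality over the triangle's vertices relative to the branch curve of $\al$---which is precisely the bookkeeping the paper's submersion argument is designed to bypass. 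In short: correct, with the key smoothness step done by a closely related but differently phrased argument, and with your secondary geometric route not fully carried out.
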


\begin{pf}
We already proved that $\wh{M} = F^{-1}(\Lambda)\subset V$ projects to $M\subset N\times \bbP^2$ as in the diagram from Equation~\eqref{eq:Finverse} and that $\wh{M}$ is the image of a section over $M$ of the bundle $V\to N\times \bbP^2$, hence the projection is biholomorphic. By its definition from Equation~\eqref{eq:Mdef}, $M$ is a closed subvariety of the projective variety $N\times \bbP^2$, hence it is projective and the squares are fiber-squares. Since $\beta$ is a branched $(\bbZ/d)^2$-cover by Equation~\eqref{eq:betadef}, so are the other vertical maps.

So far, the above is true for any $A$. It remains to prove that, for generic $A$, $M$ is a smooth subvariety of $N\times \bbP^2$. To see this, the map
\begin{align*}
\GL_3(\bbC) \times N \times \bbP^2 &\lra N \times \bbP^2 \times \bbP^2 \\
(A, x, y) &\overset{f}{\longmapsto} (x, A \al(x), y)
\end{align*}
is a submersion. Therefore, Sard's theorem implies that the map $f_A(x,y) = (x, A \al(x), y)$ is transverse to the submanifold
\[
Y = \left\{(x,\beta(y),y)~:~x\in N, y \in \bbP^2 \right\} \subset N \times \bbP^2 \times \bbP^2
\]
for almost every $A \in \GL_3(\bbC)$. Then $f_A(N \times \bbP^2) \cap Y$ is precisely the above fiber product associated with the given choice of $A$, hence we see that the fiber product is a manifold for almost every choice of $A$. Said otherwise, $M$ is a manifold for almost every $\Lambda$. This completes the proof.
\end{pf}

We now show that $\pi_1(M)$ fits into the desired central exact sequence involving $\Gam$. We will also compute $\pi_2(M)$ under the hypothesis that $\pi_1(M)$ is residually finite, which we will use to show that $M$ is not a modification of a ball quotient, i.e., that it cannot be obtained from a ball quotient by a sequence of blowups and blowdowns.

\begin{thm}\label{thm:DescribeM}
With notation as above, the smooth projective surface $M$ has the following properties.
\begin{enum}
\item The fundamental group of $M$ fits into a central exact sequence:
\begin{equation}\label{eq:pi1M}
1 \lra \bbZ / d \lra \pi_1(M) \lra \Gam \lra 1
\end{equation}

\item The exact sequence in Equation~\eqref{eq:pi1M} is the reduction modulo $d$ of the central exact sequence
\begin{equation}\label{eq:pi1N}
1 \lra \bbZ \lra \wt{\Gam} \lra \Gam \lra 1
\end{equation}
for the fundamental group $\wt{\Gam}$ of $Z$.

\item If $\pi_1(M)$ is residually finite, then $\pi_2(M) \neq \{0\}$.

\end{enum}
\end{thm}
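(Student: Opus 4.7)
The plan is to address the three parts in order, using the realization of $M$ as a generic linear slice $\wh{M} = F^{-1}(\Lam) \subset V$ given in \S\ref{ssec:Construct}. For (1), I would apply the Goresky--MacPherson theorem (Theorem~\ref{thm:Morse}) to the algebraic map $F : V \to \bbP^5$, which has finite fibers, and the generic linear $\Lam \cong \bbP^2$. This gives $\pi_1(\wh{M}) \cong \pi_1(V)$, and since $\wh{M} \cong M$ by Lemma~\ref{lem:Mfiber}, it remains to compute $\pi_1(V)$. Since $V \to N \times \bbP^2$ is a principal $\bbC^*$-bundle with first Chern class $(\al \times \beta)^* c_1(U) = \pm(d[H] - \eta)$, the connecting map in the associated homotopy long exact sequence, $\pi_2(N \times \bbP^2) \cong \bbZ \to \bbZ \cong \pi_1(\bbC^*)$, is multiplication by $\pm d$ (obtained by pairing $c_1(V)$ against $[\bbP^1] \in \pi_2(\bbP^2)$; the $\eta$ component pairs trivially). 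Centrality of the resulting extension $1 \to \bbZ/d \to \pi_1(V) \to \Gam \to 1$ follows from $V \to N \times \bbP^2$ being a principal bundle.

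For (2), the key observation is that the restriction of $V$ to $N \times \{pt\}$ is a $\bbC^*$-bundle over $N$ with first Chern class $\eta$ (the $d[H]$ term vanishes upon restriction to a point), hence $V|_{N \times \{pt\}} \cong Z$. The inclusion $Z \hookrightarrow V$ fits in a commutative square of fibrations over $N \hookrightarrow N \times \bbP^2$, which on $\pi_1$ yields a ladder between the two central extensions with the right-hand vertical map being the identity on $\Gam$. The generator of $\pi_1(\bbC^*) \subset \wt{\Gam}$ maps to the generator of $\pi_1(\bbC^*) \subset \pi_1(V)$, which has order exactly $d$ by part~(1), so the left-hand vertical map is reduction modulo $d$.

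For (3), I would argue by contradiction: assume $\pi_1(M) = \Gam_d$ is residually finite and $\pi_2(M) = 0$. Lemma~\ref{lem:CentralRF} produces a finite-index $\Lam \le \Gam$ with a section $s : \Lam \to \Gam_d$; let $M' \to M$ be the \'etale cover corresponding to $s(\Lam) \le \Gam_d$, so $\pi_1(M') \cong \Lam$ and $\pi_2(M') = \pi_2(M) = 0$. Theorem~\ref{thm:pi2=0} then identifies $M' \cong \Lam \bs \bbB^2$, endowing $M'$ with its Bergman K\"ahler metric of volume $[\Gam : \Lam] \cdot \mathrm{vol}(N)$. Tracing through the identifications, the holomorphic composition $\ph : M' \to M \to N$ induces the natural inclusion $\Lam \hookrightarrow \Gam$ on $\pi_1$ and has degree
\[
\deg \ph = [\Gam_d : s(\Lam)] \cdot \deg(M \to N) = d [\Gam : \Lam] \cdot d^2 = d^3 [\Gam : \Lam],
\]
using Lemma~\ref{lem:Mfiber} for $\deg(M \to N) = \deg \beta = d^2$. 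On the other hand, the Ahlfors--Schwarz--Yau lemma applied to $\ph$ between compact complex hyperbolic surfaces yields $\ph^* \om_N \le \om_{M'}$ pointwise, and integrating the top power gives $\deg \ph \le \mathrm{vol}(M') / \mathrm{vol}(N) = [\Gam : \Lam]$. Together these force $d^3 \le 1$, contradicting $d \ge 2$. The main obstacle is this last step: one must set up the Schwarz--Ahlfors inequality on both ball quotients with a common normalization of the Bergman metric in order to extract the sharp bound $\deg \ph \le [\Gam : \Lam]$, and verify that the $\pi_1$-tracing genuinely identifies $\ph_*$ with the inclusion. By comparison, parts (1) and (2) amount to a Chern class calculation and a functoriality check.
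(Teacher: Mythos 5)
Your parts (1) and (2) follow the paper's own route: Goresky--MacPherson applied to $F : V \to U \subset \bbP^5$ to get $\pi_1(\wh{M}) \cong \pi_1(V)$, then the homotopy sequence of the $\bbC^*$-bundle $V \to N \times \bbP^2$, with the connecting map $\pi_2(\bbP^2) \to \pi_1(\bbC^*)$ identified as multiplication by $\pm d$ and the restriction of $V$ to $N \times \{pt\}$ recovering $Z$; your Chern-class pairing is just a compact rephrasing of the paper's explicit diagram with the restricted bundles $\calL_1$ and $\calL_2$, and your ladder argument for (2) is exactly its ``orange'' part. Part (3) has the same skeleton (residual finiteness gives the section $s : \Lam \to \Gam_d$ via Lemma~\ref{lem:CentralRF}, the \'etale cover $M'$ with $\pi_1(M') \cong \Lam$, and Theorem~\ref{thm:pi2=0} to conclude $M'$ is a ball quotient), but your endgame differs: the paper lifts $f$ to $\wh{f} : M' \to N'$, notes a homotopy equivalence between closed aspherical manifolds has degree $\pm 1$, and contradicts this with the factorization through $M''$ giving degree $d^3$; you instead compose down to $N$, compute $\deg \ph = d^3[\Gam : \Lam]$ (correct, since $[\Gam_d : s(\Lam)] = d[\Gam:\Lam]$ and $\deg f = d^2$), and cap it by $[\Gam:\Lam]$ via the Ahlfors--Schwarz--Yau volume-decreasing property. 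Your route works, and the normalization worry you flag is not even critical: any multiplicative constant below $d^3 \ge 8$ in the volume inequality still yields the contradiction, and the sharp constant $1$ is available from the Kobayashi--Eisenman volume-decreasing property on ball quotients. But the analytic input is avoidable: since $M'$ and $N$ are aspherical and $\ph_*$ is the inclusion of the finite-index subgroup $\Lam$ (which holds because $f_*$ is the extension projection $\Gam_d \to \Gam$, as $f$ is the restriction of the projection $N \times \bbP^2 \to N$ and the identification $\pi_1(M) \cong \pi_1(V)$ comes from $\wh{M} \subset V$ lying over $M$), the map $\ph$ is homotopic to the covering $\Lam\bs\bbB^2 \to \Gam\bs\bbB^2$ and hence has degree $[\Gam:\Lam]$ on the nose --- which is precisely the paper's degree comparison transported down to $N$. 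So your proposal is correct; its only genuine divergence is trading a purely topological degree argument for a metric one.
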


\begin{pf}[Proof of parts 1 and 2 of Theorem~\ref{thm:DescribeM}]
Since $\alpha$ and $\beta$ are finite maps and $F:V\to U$ is a bundle map (that is an isomorphism on the fibers of the fibrations in Equation~\eqref{eq:Vbundle}), it follows that the map $F:V\to U\subset\bbP^5$ has finite fibers.  Thus Theorem~\ref{thm:Morse} applies and gives us $\pi_i(V,\wh{ M}) = 0$ for $i \le 2$. The homotopy exact sequence
\[
\begin{tikzcd}
\pi_2( \wh{M}) \arrow[r] & \pi_2(V) \arrow[r] \arrow[d,phantom, ""{coordinate, name=Z}] & \pi_2(V, \wh{M}) \cong \{0\} \arrow[dll,rounded corners,to path={ --([xshift=2ex]\tikztostart.east)|- (Z)[near end]\tikztonodes-| ([xshift=-2ex]\tikztotarget.west)-- (\tikztotarget)}] \\
\pi_1 (\wh{M}) \arrow[r] & \pi_1(V) \arrow[d,phantom, ""{coordinate, name=Y}] \arrow[r] & \pi_1(V,  \wh{M}) \cong \{0\} \arrow[dll,rounded corners,to path={ --([xshift=2ex]\tikztostart.east)|- (Y)[near end]\tikztonodes-| ([xshift=-2ex]\tikztotarget.west)-- (\tikztotarget)}] \\
\pi_0( \wh{M}) \arrow[r] & \pi_0(V) \cong \{0\}
\end{tikzcd}
\]
then implies that $ \wh{M}$ is connected, $\pi_1( \wh{M}) \cong \pi_1(V)$, and $\pi_2( \wh{M})$ maps onto $\pi_2(V)$.

In the last three statements, we can replace $\wh{M}$ by $M$, since they are biholomorphic by Lemma~\ref{lem:Mfiber}. We therefore want to compute the homotopy groups of $V$. Considering $V$ as a $\bbC^*$ fibration over $N \times \bbP^2$, we have a the exact sequence:
\[
\begin{tikzcd}
\pi_2(\bbC^*) \cong \{0\} \arrow[r] & \pi_2(V) \arrow[r] \arrow[d,phantom, ""{coordinate, name=Z}] & \pi_2(N \times \bbP^2) \cong \pi_2(\bbP^2) \cong \bbZ \arrow[dll,rounded corners,to path={ --([xshift=2ex]\tikztostart.east)|- (Z)[near end]\tikztonodes-| ([xshift=-2ex]\tikztotarget.west)-- (\tikztotarget)}] & \\
\pi_1(\bbC^*) \cong \bbZ \arrow[r] & \pi_1(V) \arrow[r] & \pi_1(N \times \bbP^2) \cong \pi_1(N) \arrow[r] & \{0\}
\end{tikzcd}
\]
Once we understand the map $\pi_2(\bbP^2) \to \pi_1(\bbC^*)$, computing $\pi_i(V)$ for $i \le 2$ will be direct.

For a fixed $y \in N$ and $z \in \bbP^2$, $F$ restricts to fibrations isomorphic to:
\[
\begin{tikzcd}[column sep = small]
\bbC^* \arrow{r} & \left(\al^* \bbL_1^{-1} \right)^* \arrow{d} & & \bbC^* \arrow{r} & \left(\beta^* \bbL_2^{-1} \right)^* \arrow{d} \\
& N \times \{z\} & & & \{y\} \times \bbP^2
\end{tikzcd}
\]
To simplify notation, let $\calL_1$ and $\calL_2$ denote the total spaces of these $\bbC^*$ bundles over $N \times \{z\}$ and $\{y\} \times \bbP^2$, respectively. Since $\beta^*\bbL_2$ is the line bundle $\calO_{\bbP^2}(-d)$ by construction, $\calL_2$ is $\calO_{\bbP^2}(d)$ minus its zero section. Since $\pi_2(\bbC^*) = \{0\}$, we have homomorphisms of exact sequences
\begin{equation}\label{eq:BigHomotopy}
{\small
\begin{tikzcd}[column sep = small]
\{0\} \arrow{r} & \pi_2(\calL_1) \arrow{r} \arrow{d} & \overset{\textcolor{magenta}{\raisebox{1.5 ex}{$\{0\}$}}}{\textcolor{orange}{\pi_2(N)}} \arrow[orange]{r} \arrow[orange]{d} & \overset{\textcolor{magenta}{\raisebox{1.5 ex}{$\bbZ$}}}{\textcolor{orange}{\pi_1(\bbC^*)}} \arrow[orange]{r} \arrow[orange]{d}{\cong} & \textcolor{orange}{\pi_1(\calL_1)} \arrow[orange]{r} \arrow[orange]{d} & \overset{\textcolor{magenta}{\raisebox{1.5 ex}{$\Gam$}}}{\textcolor{orange}{\pi_1(N)}} \arrow[orange]{r} \arrow[orange]{d}{\cong} & \textcolor{orange}{\{0\}} \\
\{0\} \arrow{r} & \pi_2(V) \arrow{r} & \textcolor{blue}{\pi_2(N \times \bbP^2)} \arrow[blue]{r} & \textcolor{blue}{\pi_1(\bbC^*)} \arrow[blue]{r} & \textcolor{blue}{\pi_1(V)} \arrow[blue]{r} & \textcolor{blue}{\pi_1(N \times \bbP^2)} \arrow[blue]{r} & \textcolor{blue}{\{0\}} \\
\{0\} \arrow{r} & \underset{\textcolor{magenta}{\raisebox{-2 ex}{$\{0\}$}}}{\pi_2(\calL_2)} \arrow{r} \arrow{u} & \underset{\textcolor{magenta}{\raisebox{-2 ex}{$\bbZ$}}}{\textcolor{blue}{\pi_2(\bbP^2)}} \arrow[blue]{r}[below, blue]{\times d} \arrow[blue]{u}{\cong} & \underset{\textcolor{magenta}{\raisebox{-2 ex}{$\bbZ$}}}{\textcolor{blue}{\pi_1(\bbC^*)}} \arrow{r} \arrow[blue]{u}{\cong} & \underset{\textcolor{magenta}{\raisebox{-2 ex}{$\bbZ / d$}}}{\pi_1(\calL_2)} \arrow{r} \arrow{u} & \underset{\textcolor{magenta}{\raisebox{-2 ex}{$\{0\}$}}}{\pi_1(\bbP^2)} \arrow{r} \arrow{u} & \{0\}
\end{tikzcd}
}
\end{equation}
where the bottom row is the standard homotopy exact sequence for the complement of the zero section in $\calO_{\bbP^2}(d)$.

Following the \textcolor{blue}{blue} portion of Equation~\eqref{eq:BigHomotopy}, we see that $\pi_1(V)$ fits into an exact sequence:
\begin{equation}\label{eq:pi1V}
1 \lra \bbZ / d \lra \pi_1(V) \lra \Gam \lra 1
\end{equation}
From the \textcolor{orange}{orange} portion, we see that Equation~\eqref{eq:pi1V} is the reduction modulo $d$ of the exact sequence
\begin{equation}\label{eq:pi1L1}
1 \lra \bbZ \lra \pi_1(\calL_1) \lra \Gam \lra 1
\end{equation}
for the fundamental group of the total space of the line bundle over $N$ with first Chern class $\eta$ minus its zero section. Since $\pi_1(V) \cong \pi_1(M)$, this proves parts 1 and 2 of the theorem.
\end{pf}

\begin{rem}
The above argument shows that $\pi_2(M)$ surjects $\pi_2(V)$, but that $\pi_2(V)$ is trivial. Thus we cannot conclude directly from $V$ that $\pi_2(M)$ is nontrivial.
\end{rem}

We can now simultaneously prove part 3 of Theorem~\ref{thm:DescribeM} and Theorem~\ref{thm:dFold}. First, the $M$ in part 1 of Theorem~\ref{thm:DescribeM} gives us the $M(\Gam,d)$ of Theorem~\ref{thm:dFold}, thus proving the first part of Theorem~\ref{thm:dFold} except for the statement about $\pi_2$. Next, if  $\Gam_d = \pi_1(M)$ is residually finite, Lemma~\ref{lem:CentralRF} gives us the subgroup $\Lam\subset \Gam$ and the section $s:\Lam\to \Gam_d$, thus proving the second part of Theorem~\ref{thm:dFold}.  
 
To prove the remaining statements in both theorems, note that if $\Gam_d = \pi_1(M)$ is residually finite, then there are finite \'etale covers $M^\prime \to M$ and $N^\prime \to N$ such that the finite map $f : M \to N$ defined by the fibre squares in the statement of Lemma~\ref{lem:Mfiber} lifts to a map $\wh{f} : M^\prime \to N^\prime$ that induces an isomorphism on fundamental groups. These are the covers corresponding to the subgroups $s(\Lam) \subset \Gam_d = \pi_1(M)$ and $\Lam\subset\Gam =\pi_1(N)$ respectively.

Note that $\pi_2(M^\prime) \cong \pi_2(M)$. If $\pi_2(M)$ was zero, then Theorem~\ref{thm:pi2=0} implies that $M^\prime$ is a ball quotient, so in particular it is aspherical. The map $\wh{f}:M^\prime \to N$ inducing an isomorphism of fundamental groups would then be a homotopy equivalence, in particular have degree $\pm 1$. However, if $M^{\prime\prime}$ is the cover of $M$ associated with the subgroup $\pi_d^{-1}(\Lam)\subset\Gam_d$, isomorphic to $\bbZ / d \times s(\Lam)$ containing $s(\Lam)$ (with $\pi_d : G_d \to G$ the covering), we obtain a diagram of maps
\[
\begin{tikzcd}
M \arrow{r}{\deg\, d^2} & N \\
M^{\prime\prime} \arrow{r} \arrow{u} & N^\prime \arrow{u} \\
M^\prime \arrow{u}[left]{d\textrm{-to-}1} \arrow[dashed]{ur}[below]{\wh{f}} &
\end{tikzcd}
\]
This shows that $\wh{f}:M^\prime\to N$ has degree $d^3 >1$, contradicting the previous calculation of this degree. This contradiction implies that $\pi_2(M)$ must be nontrivial (in fact infinite by Remark~\ref{rem:pinf2}), completing the proof of both theorems except for the statement that $M^\prime$ is not birational to any ball quotient.

Suppose $Z$ is a compact, smooth quotient.of $\bbB^2$.  Observe that $Z$ has no rational curves, that is, every holomorphic map $\bbP^1 \to Z$ is constant.  Note  that the same is true for $M^\prime$, because, by construction,  $M^\prime$ has a finite map to a ball quotient $N$. We use the following fact (see \cite[Thm.~VI.1.9.3]{KollarRat}):

\begin{lem}\label{lem:KollarRat}
Suppose $X,Y$ are smooth projective varieties, suppose $Y$ has no rational curves, and $f:X\dashrightarrow Y$ is a rational map. Then $f$ is everywhere defined.
\end{lem}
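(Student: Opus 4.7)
The plan is to use resolution of indeterminacy combined with the observation that exceptional divisors produced by blow-ups at smooth centers in smooth varieties are uniruled. Let $I \subset X$ denote the indeterminacy locus of $f$. Since $X$ is smooth and $Y$ is proper, the classical ``purity of indeterminacy'' statement gives that $I$ has codimension at least two in $X$, so in particular $f$ is a morphism on an open set whose complement is small.

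Next, I would take a resolution of indeterminacy: form the graph closure $\Gam \subset X \times Y$ of $f$ and apply Hironaka to obtain a smooth birational model $\pi : \wt{X} \to X$ (realized as a finite sequence of blow-ups along smooth centers contained in the preimages of $I$) together with an everywhere-defined morphism $\wt{f} : \wt{X} \to Y$ satisfying $\wt{f} = f \circ \pi$ on the preimage of $X \smallsetminus I$. For any $x \in I$, the fiber $E_x = \pi^{-1}(x)$ is then a positive-dimensional connected projective variety assembled from (strict transforms of) iterated exceptional divisors. The key geometric input is that each such $E_x$ is \emph{uniruled}: blowing up a smooth subvariety of a smooth variety produces a projective bundle as exceptional divisor, hence is swept out by $\bbP^1$'s, and this uniruledness is preserved under further blow-ups along smooth centers by a short induction on the number of blow-ups.

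With this in hand, the hypothesis on $Y$ finishes the argument cleanly: every nonconstant morphism $\bbP^1 \to Y$ is ruled out, so $\wt{f}$ must contract each rational curve in $E_x$ to a point. Since $E_x$ is connected and covered by such curves, the image $\wt{f}(E_x)$ is a single point $y_x \in Y$. Applying the rigidity lemma to the proper birational morphism $\pi$ with connected fibers and to the morphism $\wt{f}$ that is constant on every fiber of $\pi$, we conclude that $\wt{f}$ factors as $g \circ \pi$ for some morphism $g : X \to Y$, which must agree with $f$ on $X \smallsetminus I$ and hence equals $f$ as a rational map. Thus $f$ extends to a morphism on all of $X$.

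The main obstacle is the uniruledness step for iterated blow-ups, where one must argue that later blow-ups (with centers possibly contained in earlier exceptional loci) do not destroy the property that $\pi^{-1}(x)$ is covered by rational curves. This is handled by induction: if the exceptional locus over $x$ at stage $k$ is covered by rational curves, then blowing up a smooth subvariety produces a new exceptional divisor that is a projective bundle (hence uniruled) and the strict transforms of the old rational curves remain rational, so the covering property is preserved. Everything else is either standard (purity of indeterminacy, Hironaka, rigidity) or a direct consequence of the no-rational-curves hypothesis on $Y$.
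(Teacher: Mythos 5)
Your overall architecture (resolve indeterminacy, use the absence of rational curves in $Y$ to contract the exceptional fibers, then descend by rigidity) is reasonable, and the peripheral steps (codimension-two indeterminacy, positive-dimensionality and connectedness of $E_x$, the ZMT/rigidity descent once $\wt{f}$ is constant on fibers) are fine. But there is a genuine gap at the crucial step: from ``$E_x$ is connected and every point of $E_x$ lies on a rational curve in $E_x$ contracted by $\wt{f}$'' you conclude that $\wt{f}(E_x)$ is a single point. That inference is false as stated: a $\bbP^1$-bundle over a curve $C$ of positive genus is connected and covered by rational curves, all of which are contracted by the projection to $C$, yet the image is a curve, which is perfectly compatible with $Y$ containing no rational curves. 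What you actually need is that $E_x$ is rationally \emph{chain} connected (any two points joined by a connected chain of rational curves inside $E_x$); then contracting every rational curve does force the image to be a point. Your induction establishes only the covering property (uniruledness), not chain connectedness, and the distinction is not cosmetic here: fibers of iterated smooth blow-ups genuinely can contain components that are projective bundles over positive-genus curves (blow up a point of $X$, then blow up a smooth positive-genus curve lying inside the exceptional $\bbP^{n-1}$), so the components of $E_x$ need not even be rational varieties. It is only the chain connectedness of the whole fiber---via chains passing through the strict transforms of earlier exceptional pieces---that saves the argument, and proving that requires a finer induction (one must check at each stage that the new exceptional-bundle fibers meet the strict transform of the previous fiber so chains can be concatenated), or an appeal to nontrivial results on rational chain connectedness of fibers of birational morphisms in the spirit of Hacon--McKernan.

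For comparison, the paper gives no proof and simply cites Koll\'ar's book, where the standard argument avoids this issue entirely: if $f$ is undefined at some point $x$, cut $X$ by general hyperplane sections through $x$ to reduce to a rational map from a smooth surface that is still undefined at $x$, resolve its indeterminacy by finitely many point blow-ups, and note that the exceptional $\bbP^1$ of the last blow-up cannot be contracted by the resolved morphism (otherwise the morphism would descend one step, contradicting minimality of the resolution); hence that curve maps onto a rational curve in $Y$, contradicting the hypothesis. This directly produces a rational curve in $Y$ and never needs the fibers to be contracted, which is exactly the step where your write-up breaks down. If you want to keep your higher-dimensional route, replace ``covered by rational curves'' by ``rationally chain connected'' throughout and prove the stronger inductive statement.
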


Finally, suppose $f:M^\prime \dashrightarrow Z$ is a birational map.  Applying Lemma~\ref{lem:KollarRat} to $f$ and $f^{-1}$ we see that they are both everywhere defined, so $f$ is biholomorphic, which is impossible. Thus $M^\prime$ and $Z$ cannot be birational. This completes the proofs of part 3 of Theorem~\ref{thm:DescribeM} and Theorem~\ref{thm:dFold}.

\medskip

To give a concrete example of how one can use the methods of this section, we now prove Theorem~\ref{thm:FakeFakes}.

\begin{pf}[Proof of Theorem~\ref{thm:FakeFakes}]
Let $\Gam$ be the fundamental group of a fake projective plane $X$ whose canonical bundle $K_X$ is divisible by three. The case $d = 3$ of Theorem~\ref{thm:DescribeM} applied to the preimage $\wt{\Gam}$ of $\Gam$ in the universal cover of $\PU(2,1)$ provides a smooth complex surface $S$ whose fundamental group is the central $\bbZ / 3$ extension of $\Gam$ isomorphic to the preimage of $\Gam$ in $\SU(2,1)$. Our assumption on $K_X$ implies that this lift splits, so $\pi_1(S) \cong \bbZ / 3 \times \Gam$ (e.g., see \cite[\S 8.9]{Kollar}).

It follows that there is a degree $3$ \'etale cover $S^\prime \to S$ with fundamental group $\Gam$. By construction, $S^\prime$ admits a degree $9$ map onto $X$, hence there is a degree $27$ map from $S^\prime$ to $X$. Arguing  as in the proof of part 3 of Theorem~\ref{thm:DescribeM}, this implies that $S^\prime$ cannot be a modification of $X$. Thus $S^\prime$ is the desired surface.
\end{pf}

\begin{rem}\label{rem:FakeChern}
We briefly explain how one can determine the Chern numbers of the surfaces in Theorem~\ref{thm:FakeFakes}. Let $X$ be a fake projective plane such that $K_X$ is divisible by $3$. We need to replace $K_X$ with a very ample line bundle with the same first Chern class as $K_X$ modulo $3$. Using Reider's theorem as in \cite[Thm.\ 2.1]{CataneseKeum}, one easily sees that $n K_X$ is very ample for all $n \ge 3$, and it has the same Chern class modulo $3$ as $K_X$, since $K_X$ is divisible by $3$. In fact, Catanese and Keum prove that one can take $2 K_X$ for many fake projective planes \cite[Thm.\ 1.1]{CataneseKeum}.

Proceeding with our construction, we take $S_n$ to be the surface with fundamental group $\pi_1(X) \times \bbZ / 3$ built above. The construction of $S_n$ as a fiber product implies that it is the $(\bbZ / 3)^2$ branched cover of $X$ over three general members $D_1, D_2, D_3$ of the linear system $|n K_X|$. Note that ``general'' here is only needed to ensure that the triple intersection is empty. We compute the genus of $D_j$ by adjunction:
\begin{align*}
2 (g - 1) &= K_X D_j + D_j^2 \\
&= n K_X^2 + n^2 K_X^2 \\
&= 9 n(n+1)
\end{align*}
Each $D_j$ meets another of the curves $D_k$ in $D_j D_k = 9 n^2$ points, hence each $D_j$ contains $18 n^2$ singular points and there are $27 n^2$ singular points in total.

Branching of $S_n \to X$ has order $3$ over each nonsingular point of $D_j$ and order $9$ over the singular points. For example, following \cite[p.\ 16-17]{BHH}, we can then compute the Chern numbers of $S_n$ by:
\begin{align*}
c_1^2(S_n) &= 9\left(K_X + 3 \left(1-\frac{1}{3} \right) n K_X \right)^2 \\
&= 27\, \left((2n+1) K_X\right)^2 \\
&= 243\, (2n+1)^2 \\
c_2(S_n) &= 9\left(3 - 3 \left(1 - \frac{1}{3}\right)(-9n(n+1)-18 n^2) - 27 n^2 \left(1 - \frac{1}{9}\right)\right) \\
&= 81\, (10 n^2 + 6 n + 1)
\end{align*}
Now, let $S_n^\prime$ be the \'etale $\bbZ / 3$ cover of $S_n$ with $\pi_1(S_n^\prime) \cong \pi_1(X) < \pi_1(S_n)$. Note that the Chern numbers of $S_n^\prime$ are three times those of $S_n$. This determines a sequence of surfaces $S_n^\prime$, $n \ge 1$, where $S_1^\prime = X$, maybe $S_2^\prime$ is not defined if $2 K_X$ is not very ample, and for $n \neq 1$ we have:
\[
c_1^2(S_n^\prime) / c_2(S_n^\prime) = \frac{3(2n+1)^2}{10 n^2 + 6n + 1} \overset{n\to\infty}{\xrightarrow{\hspace*{0.7cm}}} \frac{6}{5}
\]
Note that Troncoso and Urz\'ua \cite{TroncosoUrzua} produce examples of surfaces $Y_\al$ with $\pi_1(Y_\al) \cong \pi_1(X)$ and $c_1^2(Y_\al) / c_2(Y_\al)$ dense in $[1,3]$, whereas we only obtain the limit point $1.2$.
\end{rem}

\section{Examples}\label{sec:Ex}

This section provides the examples necessary to complete the proof of Theorem~\ref{thm:ExsExist} using presentations of preimages in the universal cover. In particular, we use the method developed in \S\ref{sec:Lift}. Supplemental Mathematica files are available on the first author's website for readers interested in verifying some of the calculations that follow. Our examples are all commensurable with the lattices constructed by Deligne and Mostow \cite{DeligneMostow1, MostowSigma}. Throughout this section, $\zeta_n$ will denote a primitive $n^{th}$ root of unity.

We study Deligne--Mostow lattices $\Gam < \PU(2,1)$ for which the underlying space for the orbifold $\Gam \bs \bbB^2$ is the weighted projective space $\bbP(1,2,3)$, i.e., the quotient of $\bbP^2$ by the natural action of $S_3$ by coordinate permutations. A basic reference for the underlying spaces for Deligne--Mostow orbifolds is \cite[Thm.\ 4.1]{KirwanLeeWeintraub}; in their notation, we are interested in the cases where $Q(\mu)$ is $\bbP^2$ and $\mu$ has symmetry group $S_3$.

We begin with some basics on a standard presentation for these lattices based on the topology of $\bbP(1,2,3)$. If $[x_1 : x_2 : x_3]$ denotes homogeneous coordinates on $\bbP^2$, let $A$ be the image on $\bbP(1,2,3)$ of the lines $x_i = x_j$ and $B$ denote the images of the coordinate lines $x_i = 0$. Then $A$ and $B$ are tangent at one point, intersect transversally at another, $B$ has a cusp singularity, there is a singularity of $\bbP(1,2,3)$ of type $\mathrm{A}_1$ on $A$, and a singularity of type $\mathrm{A}_2$ that lies on neither $A$ or $B$. See Figure~\ref{fig:123} for a visualization.
\begin{figure} [htbp]
\centering
\begin{tikzpicture}(-2.5,0.95) (-0.75,0.725)
\draw[white, fill=white] (-2.5,-2.5) -- (-2.5,2.5) -- (2.5,2.5) -- (2.5,-2.5) -- (-2.5,-2.5);
\draw[thick, blue] (0,2) -- (0,-2);
\draw[white, fill=white] (0,0.38) circle (0.07cm); 
\draw[thick] plot [smooth cycle, tension=1] coordinates {(-1.5,1.2) (-0.5475, 0.625) (0.5,0.25) (-0.462, 0.805)};
\node[right] at (0.5, 0.25) {$v$};
\draw[white, fill=white] (0,0.6) circle (0.14cm);
\draw[white, fill=white] (-0.175,0.45) circle (0.07cm); 
\draw[thick, blue] (0,0.8) -- (0,0.5);
\draw[thick, green] (-1,2) arc (90:-90:1cm);
\draw[thick] plot [smooth cycle, tension=1] coordinates {(-1.5,1.2) (-0.952, 0.64) (-0.25,0) (-1.06, 0.55)};
\node[below] at (-0.25,0) {$u$};
\draw[white, fill=white] (-0.52,0.125) circle (0.052cm);
\draw[thick, green] (-1.5,0) .. controls (-1, 0) and (-0.75, -1) .. (-0.25,-1);
\draw[white, fill=white] (-0.84,-0.55) circle (0.08cm);
\draw[thick] plot [smooth cycle, tension=1] coordinates {(-1.5,1.2) (-1, 0.24) (-0.85,-0.85) (-1.1, 0.15)};
\draw[white, fill=white] (-1.06,0) circle (0.035cm);
\draw[white, fill=white] (-0.93,0) circle (0.035cm);
\draw[thick, green] (-1, 0) -- (-1.5, 0);
\draw[thick, green] (-1,0) arc (-90:-57:1cm);
\draw[white, fill=white] (-0.95,-0.4) circle (0.07cm);
\draw[white, fill=white] (-0.97,-0.32) circle (0.025cm);
\draw[white, fill=white] (-0.94,-0.47) circle (0.025cm);
\draw[thick, green] plot [smooth] coordinates {(-1.02,-0.315) (-0.95,-0.4) (-0.9,-0.47)};
\node[below] at (-0.85, -0.85) {$b$};
\draw[thick, green] (-0.25, -1) -- (2.5,-1);
\node [blue, above] at (0,2) {$A$};
\node [green] at (-1.25, 2.05) {$B$};
\draw [orange, fill=orange] (0,1) circle (0.05cm);
\draw [orange, fill=orange] (-1.5,0) circle (0.05cm);
\draw [orange, fill=orange] (0,-1) circle (0.05cm);
\node [orange] at (0, -1.5) {$\boldsymbol{\times}$};
\node [orange] at (0.5, -1.5) {$\mathrm{A}_1$};
\node [orange] at (-1.25, -1.75) {$\boldsymbol{\times}$};
\node [orange] at (-1.7, -1.75) {$\mathrm{A}_2$};
\draw[black, fill=black] (-1.5, 1.2) circle (0.05cm);
\end{tikzpicture}
\caption{A visualization of $\bbP(1,2,3)$}\label{fig:123}
\end{figure}

Consider the loops $b$, $u$, and $v$ around $A$ and $B$ pictured in Figure~\ref{fig:123}. One can show that
\[
\pi_1(\bbP(1,2,3) \ssm \{A,B\}) = \left\langle b,u,v\ |\ \mathrm{br}_2(b,v), \mathrm{br}_3(b,u), \mathrm{br}_4(u,v), (buv)^3 \right\rangle,
\]
where $\mathrm{br}_m(a,b)$ denotes the braid relation of length $m$:
\begin{equation}\label{eq:braid}
\mathrm{br}_m(a,b) = \begin{cases}
ab\cdots ab = ba \cdots ba & m~\textrm{even} \\
ab \cdots ba = ba \cdots ab & m~\textrm{odd}
\end{cases}
\end{equation}
It follows from standard facts about orbifolds and the procedure described in \cite{DeligneMostow1, MostowSigma} for computing orbifold weights that we obtain the presentation
\[
\Gam = \left\langle b,u,v\ |\ b^{r_1}, u^{r_1}, v^{r_2}, \mathrm{br}_2(b,v), \mathrm{br}_3(b,u), \mathrm{br}_4(u,v), (buv)^3 \right\rangle
\]
where $r_1$ and $r_2$ are computed directly from the set of weights $\mu$.

At the two crossings between $A$ and $B$, as well as at the singularity of $B$, the local orbifold group is then a complex reflection group
\[
p[m]q = \langle a,b~|~a^p, b^q, \mathrm{br}_m(ab) \rangle
\]
generated by complex reflections of order $p$ and $q$. When $p[m]q$ is finite (i.e., it is spherical), it admits a standard representation to $\U(2)$ where $\det(a) = \zeta_p$ and $\det(b) = \zeta_q$, each with eigenvalues $1$ and $\zeta_r$ for $r=p,q$. See \cite[\S 9.8]{Coxeter}. When $p[m]q$ is infinite it is Euclidean and the associated point of $\bbP(1,2,3)$ is a cusp point of the complex hyperbolic orbifold, that is, $\Gam \bs \bbB^2$ is the complement of the cusp points in $\bbP(1,2,3)$.

\medskip

\noindent
\textbf{Example 1: (5,4,1,1,1)/6} (nonuniform, arithmetic)

\medskip

This example is noncompact, arithmetic, and commensurable with the Picard modular group over the field $\bbQ(\zeta_3)$. The curve $A$ in Figure~\ref{fig:123} has orbifold weight $6$ and the curve $B$ has weight $3$. The point of tangency between $A$ and $B$ is a cusp with Euclidean cusp group $3[4]6$. See the appendix to \cite{StoverUrzua} for more on the orbifold structure and details concerning the computations that follow.

Our presentation is:
\[
\Gam = \left\langle b,u,v\ |\ b^3, u^3, v^6, \mathrm{br}_2(b,v), \mathrm{br}_3(b,u), \mathrm{br}_4(u,v), (buv)^3 \right\rangle
\]
With respect to the standard hermitian form $h$ in \S\ref{ssec:SUcoords}, we can choose generators for $\Gam < \PU(2,1)$ with the following representatives in $\U(2,1)$:
\begin{align*}
b_0 &\mapsto \begin{pmatrix}
1 & 0 & -1/\sqrt{-3} \\
0 & \zeta_6^5 & 0 \\
\sqrt{-3} & 0 & 0
\end{pmatrix} & u_0 \mapsto \begin{pmatrix}
\zeta_6^5 & 0 & 0 \\
\sqrt{-3} & \zeta_6 & 0 \\
\sqrt{-3} & \sqrt{-3} & \zeta_6^5
\end{pmatrix}\\
v_0 &\mapsto \begin{pmatrix}
\zeta_6 & 0 & 0 \\
0 & \zeta_3 & 0 \\
0 & 0 & \zeta_6
\end{pmatrix} &
\end{align*}
Scaling these by the cube root of their determinant, we obtain elements $\wh{b}, \wh{u}, \wh{v} \in \SU(2,1)$ that generate the preimage $\wh{\Gam}$ of $\Gam$ in $\SU(2,1)$.

Our generators in $\SU(2,1)$ satisfy the relations
\begin{align*}
\wh{b}^3 = \wh{u}^3 = \wh{v}^6 &= \wh{z}^2 \\
(\wh{b} \wh{u} \wh{v})^3 &= \Id
\end{align*}
where $\wh{z} = \zeta_3 \Id$ is the standard generator for the center of $\SU(2,1)$. Moreover, one checks that $\mathrm{br}_2(\wh{b}, \wh{v})$, $\mathrm{br}_3(\wh{b}, \wh{u})$, $\mathrm{br}_4(\wh{u}, \wh{v})$ all hold. The above relations give a presentation for $\wh{\Gam}$.

Now, using the methods developed in \S\ref{sec:Lift}, we can fix lifts $b$, $u$, and $v$ of our generators of $\Gam$ to the universal cover $\wt{G}$ of $\SU(2,1)$ and use computational software to compute the associated lift of each relation. For example, see Figures~\ref{fig:LiftbRelationPicard}-\ref{fig:LiftbRelationCPicard} for a graphical representation produced in Mathematica of the projection to $\bbC \ssm \{0\}$ as in \S\ref{sec:Lift} of the loop in $\SU(2,1)$ given by the relation $\wh{b}^9 = \Id$ with $\wh{b}^3$ a generator for the center of $\SU(2,1)$. Working through the lift of each relation, one obtains:
\begin{figure}[h]
\centering
\includegraphics[scale=0.15]{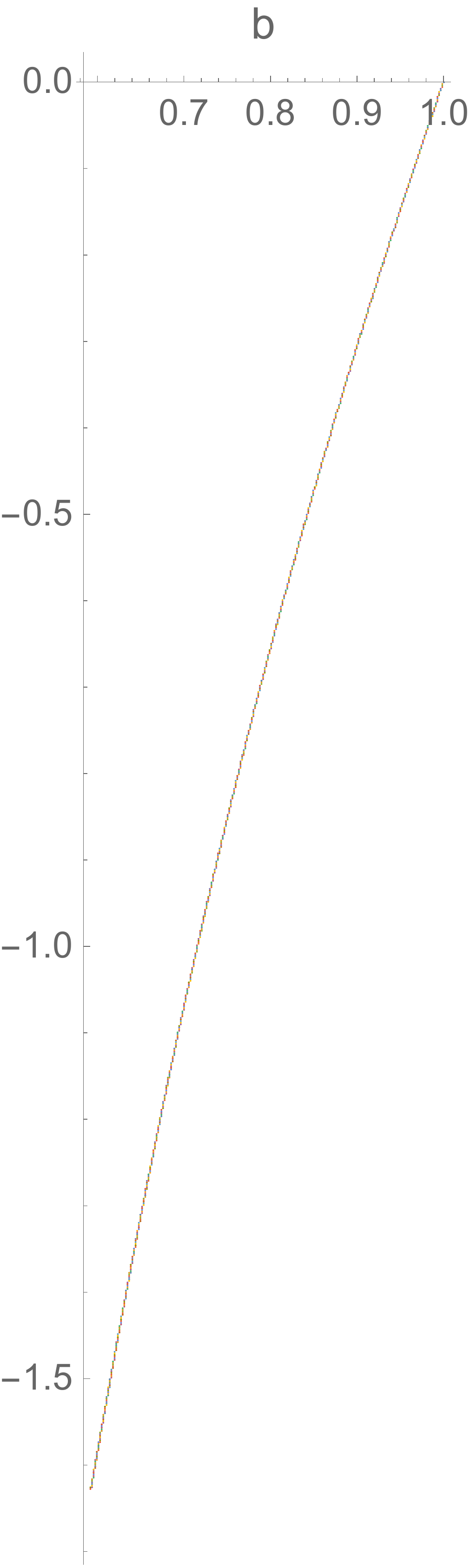}
\caption{Lifting $\wh{b}$ starts turning clockwise from $1 \in \bbC \ssm \{0\}$.}\label{fig:LiftbRelationPicard}
\end{figure}
\begin{figure}[h]
\centering
\includegraphics[scale=0.15]{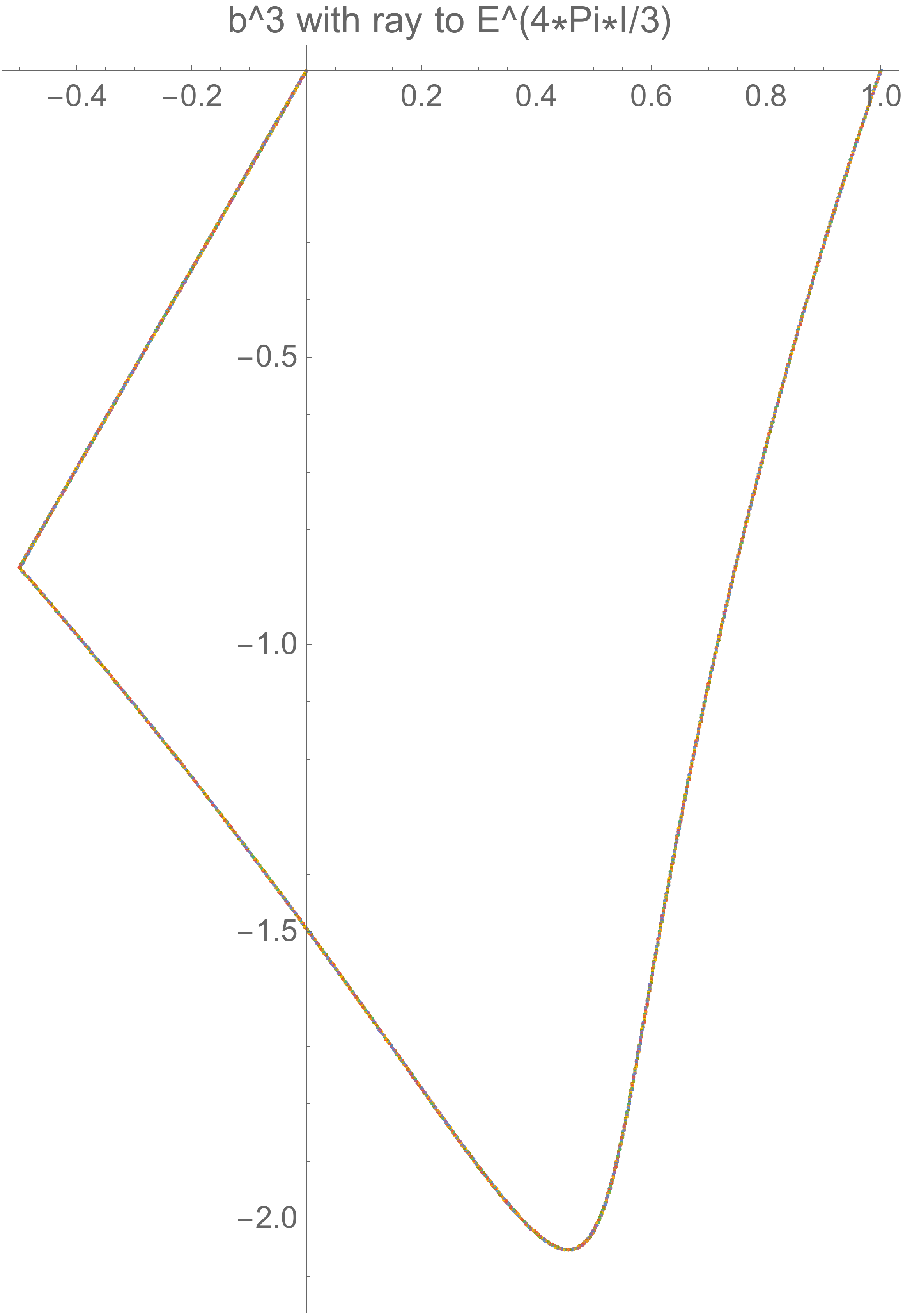}
\caption{The path for the lift of $\wh{b}^3$ ends at $e^{4 \pi i /3}$, so $\wh{b}^3$ lifts to $z^{-1}$.}\label{fig:LiftbRelationAPicard}
\end{figure}
\begin{figure}[h]
\centering
\includegraphics[scale=0.15]{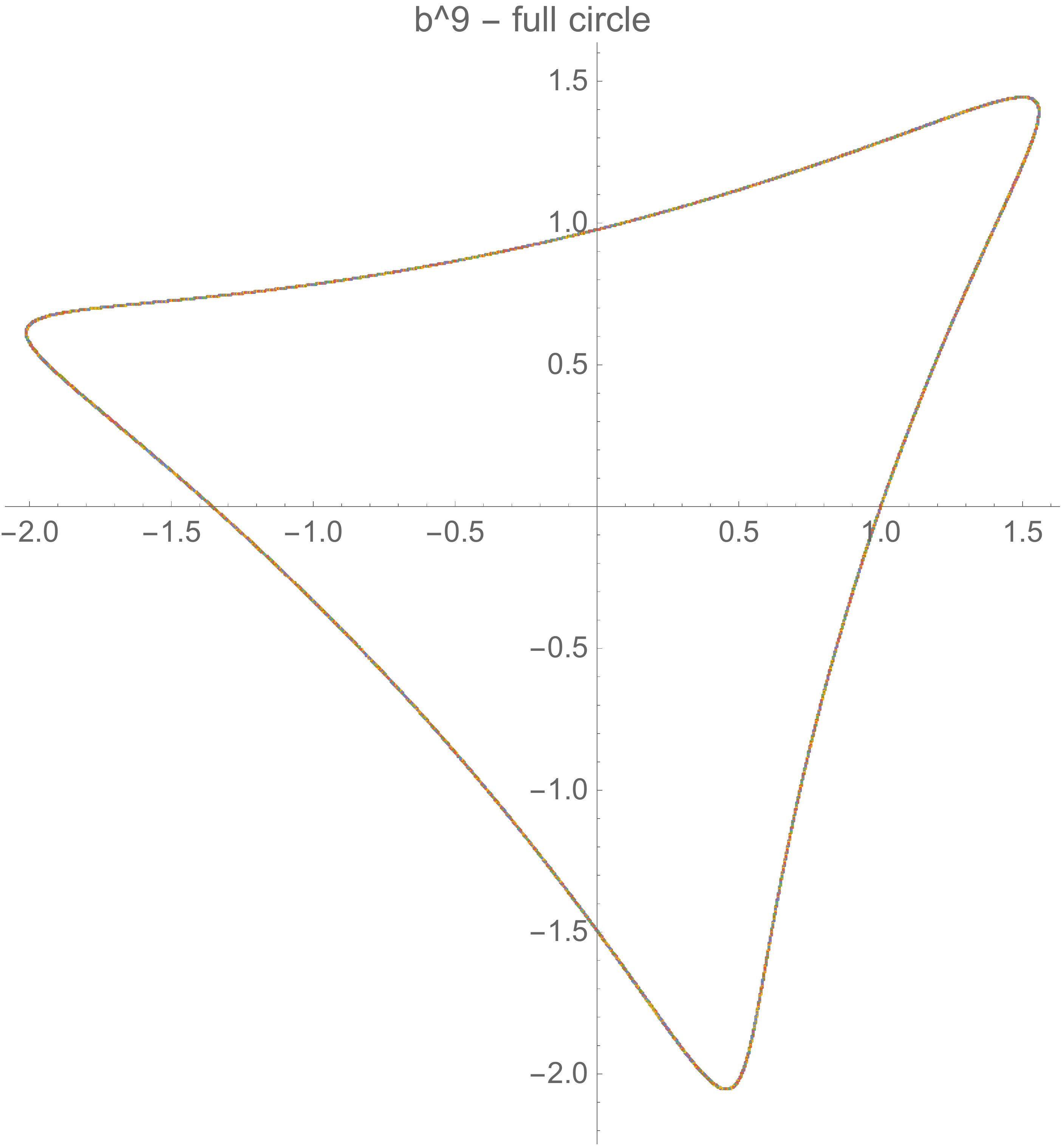}
\caption{A full clockwise rotation at $\wh{b}^9 = \Id$.}\label{fig:LiftbRelationCPicard}
\end{figure}

\begin{thm}\label{thm:PresentDMLiftPicard}
Let $\Gam < \PU(2,1)$ be the Deligne--Mostow lattice with weights $(5,4,1,1,1)/6$. The preimage $\wt{\Gam}$ of $\Gam$ in the universal cover $\wt{G}$ has presentation
\[
\left\langle b, u, v, z\ :\ b^3 z,\, u^3 z,\, v^6 z,\, \mathrm{br}_2(b,v),\, \mathrm{br}_3(b,u),\, \mathrm{br}_4(u,v),\, (buv)^3 z^3 \right\rangle,
\]
where $z$ is a lift to $\wt{G}$ of a generator for the center of $\SU(2,1)$ and hence $z^3$ denotes a generator for $\pi_1(\SU(2,1))$.
\end{thm}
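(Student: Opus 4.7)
The plan is to apply the lifting procedure of Section~\ref{sec:Lift} mechanically, using as input the presentation of $\wh{\Gam} < \SU(2,1)$ recorded above. That presentation consists of the generators $\wh{b},\wh{u},\wh{v},\wh{z}$ (with $\wh{z}$ central), together with the relations $\wh{b}^3 \wh{z}^{-2}$, $\wh{u}^3 \wh{z}^{-2}$, $\wh{v}^6 \wh{z}^{-2}$, $\mathrm{br}_2(\wh{b},\wh{v})$, $\mathrm{br}_3(\wh{b},\wh{u})$, $\mathrm{br}_4(\wh{u},\wh{v})$, and $(\wh{b}\wh{u}\wh{v})^3$. Explicit matrix realizations of each generator inside $\SL_3(\bbC)$ are already in hand from the displayed formulas for $b_0, u_0, v_0$ and the identity $\wh{z} = \zeta_3 \Id$.

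First I would fix, for each generator $\wh{g} \in \{\wh{b},\wh{u},\wh{v}\}$, a vector $v_g$ in the Lie algebra $\mathfrak{su}(2,1)$ with $\exp_G(v_g) = \wh{g}$, and define $g := \exp_{\wt{G}}(v_g) \in \wt{G}$; the same recipe produces a distinguished lift $z$ of $\wh{z}$. For each of the seven relations $\calR_i$, the next step is to form the piecewise path $\sig_i : [0,1] \to \SU(2,1)$ prescribed in Section~\ref{sec:Lift}, which concatenates the paths $\gam_g(s) = \exp_G(s v_g)$ (traversed forward or backward according to sign) pre-multiplied by the appropriate partial products of generators. Since $\calR_i$ is a relation, $\sig_i$ is a loop at the identity of $\SU(2,1)$. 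Lemma~\ref{lem:Computepi1} then identifies the integer $r_i \in \pi_1(\SU(2,1)) \cong \bbZ$ attached to $\sig_i$ with the winding number about $0$ of the planar curve
\[
\wh{\sig}_i(s) = \pi_3(\sig_i(s) z_0) \in \bbC \ssm \{0\},
\]
where $\pi_3$ is projection onto the third coordinate. Because every matrix involved is explicit, each $r_i$ can be read off numerically or symbolically; the accompanying figures illustrate exactly this calculation for the single relation word associated with $\wh{b}^3$.

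The bulk of the labor, and the only genuine obstacle, is to execute this winding-number computation carefully for all seven relations. The braid words are the most delicate, since they interleave paths from different generators pre-multiplied by changing partial products, so one has to keep track of the right multipliers $g_{i_{j-1}}^{\ep_{i,j-1}} \cdots g_{i_1}^{\ep_{i,1}}$ at each stage of the concatenation. This is where Mathematica (or any equivalent numerical routine) is essential, but there is no conceptual barrier: each piece of $\sig_i$ is a one-parameter subgroup, so $\sig_i(s) z_0$ is a piecewise-analytic curve in $\bbC^3$, and extracting the winding of its third component is standard.

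Once the seven winding numbers are tabulated, Lemma~\ref{lem:CorrectPresentation} assembles the result: the lift of each relation $\calR_i$ to $\wt{\Gam}$ is obtained by replacing every $\wh{g}$ by its tilde $g$ and appending the factor $z^{-3 r_i}$ (here $n+1 = 3$), supplemented by the relation that $z$ is central. Matching these expressions against the statement of the theorem amounts to verifying that the three central relations $\wh{g}^{m_g} \wh{z}^{-2}$ each pick up exactly one net factor of $z$, the three braid relations pick up no $z$-factor, and $(\wh{b}\wh{u}\wh{v})^3$ picks up $z^3$. These are precisely the outputs the winding-number computation is designed to produce, and the theorem follows once they are confirmed.
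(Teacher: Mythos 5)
Your proposal is correct and follows essentially the same route as the paper: take the explicit $\SU(2,1)$ presentation of $\wh{\Gam}$ with the given matrices, lift generators via $\exp_{\wt{G}}$, compute the winding number $r_i$ of each relation loop using Lemma~\ref{lem:Computepi1} (in practice with Mathematica, as in the paper's supplemental files and Figures~\ref{fig:LiftbRelationPicard}--\ref{fig:LiftbRelationCPicard}), and assemble the lifted presentation via Lemma~\ref{lem:CorrectPresentation}. The only caveat is that the theorem ultimately rests on actually tabulating those seven winding numbers, which your plan defers to the computation exactly as the paper does.
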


We now explain how one proves that $\wt{\Gam}$ is residually finite. We exploit the subgroup of index $72$ associated with the ball quotient manifold constructed by Hirzebruch \cite{Hirzebruch}. Briefly, this is the fundamental group of one of the five ball quotient manifolds of Euler number $1$ that admit a smooth toroidal compactification \cite{StoverVolumes, DiCerboStoverClassify}. See \cite{Stoverb1, DiCerboStoverMultiple} for more on the geometry of this manifold and further applications to our understanding of the structure of ball quotients and their fundamental groups.

Let $\Lam < \Gam$ be the fundamental group of Hirzebruch's ball quotient. See \cite[Prop.\ A.8]{StoverUrzua} for details concerning how one can find $\Lam$, say in Magma, as a normal subgroup of $\Gam$ with index $72$. Let $\wt{\Lam}$ be the preimage of $\Lam$ in $\wt{\Gam}$, $\calN$ be the maximal two-step nilpotent quotient of $\Lam$, and $\wt{\calN}$ the maximal two-step nilpotent quotient of $\wt{\Lam}$ (recall, this is the quotient of $\wt{\Lam}$ by the second term of its lower central series). Using Magma, we see that there are exact sequences
\begin{align*}
1 \lra \bbZ^3 \lra &\calN \lra \bbZ^4 \lra 1 \\
1 \lra \bbZ^4 \lra &\wt{\calN} \lra \bbZ^4 \lra 1
\end{align*}
and that $z \in \wt{\Gam}$ maps to an infinite order element of the derived subgroup of $\wt{\calN}$. Corollary~\ref{cor:RFNilQuo} then implies that $\wt{\Lam}$ is residually finite, hence Lemma~\ref{lem:CommensurableRF} gives the following.

\begin{thm}\label{thm:PicardRF}
Let $\Gam < \PU(2,1)$ be any lattice commensurable with the Picard modular group over the field $\bbQ(\zeta_3)$ and $\wt{\Gam}$ be its preimage in the universal cover of $\PU(2,1)$. Then $\wt{\Gam}$ is residually finite and linear.
\end{thm}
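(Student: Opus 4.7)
The plan is to exhibit one specific torsion-free finite-index subgroup $\Lam < \Gam$ whose preimage $\wt{\Lam}$ in $\wt{G}$ admits a two-step nilpotent quotient injective on the center, and then to spread both residual finiteness and linearity through the commensurability class using Lemma~\ref{lem:CommensurableRF} and its linearity analogue.

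First, I would reduce to the Deligne--Mostow lattice $\Gam$ with weights $(5,4,1,1,1)/6$. This lattice is commensurable with the Picard modular group over $\bbQ(\zeta_3)$, and since preimages of commensurable lattices in $G$ remain commensurable in $\wt{G}$, Lemma~\ref{lem:CommensurableRF} reduces the full theorem to any single convenient representative of the commensurability class. The explicit finite presentation of $\wt{\Gam}$ in generators $b, u, v, z$ (with $z$ a lift of a generator of the center of $\SU(2,1)$) has already been computed in Theorem~\ref{thm:PresentDMLiftPicard}, so one has an effective starting point.

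Second, I would pass to the index-$72$ torsion-free normal subgroup $\Lam \le \Gam$ corresponding to Hirzebruch's ball quotient of Euler number one, available as an explicit subgroup of $\Gam$ via \cite[Prop.~A.8]{StoverUrzua}. Pulling this description back through Theorem~\ref{thm:PresentDMLiftPicard} gives a concrete presentation of $\wt{\Lam} < \wt{\Gam}$. I would then use computer algebra (e.g.\ Magma) to compute the maximal two-step nilpotent quotients $\calN$ of $\Lam$ and $\wt{\calN}$ of $\wt{\Lam}$, aiming to verify that they sit in central extensions
\[
1 \lra \bbZ^3 \lra \calN \lra \bbZ^4 \lra 1, \qquad 1 \lra \bbZ^4 \lra \wt{\calN} \lra \bbZ^4 \lra 1.
\]
The decisive point is the rank jump in the derived subgroup upon lifting, which is equivalent to the image of $z$ having infinite order in $\wt{\calN}$.

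Third, with this verified, Corollary~\ref{cor:RFNilQuo} applied to the central extension $1 \to \langle z \rangle \to \wt{\Lam} \to \Lam \to 1$ yields residual finiteness of $\wt{\Lam}$, using the fact that $\Lam$ is residually finite by Malcev applied to $\PU(2,1)$. Lemma~\ref{lem:CommensurableRF} then transfers residual finiteness to $\wt{\Gam}$ and, since preimages of commensurable lattices remain commensurable in $\wt{G}$, to the preimage of every lattice commensurable with the Picard modular group. For linearity, I would invoke the final Remark of Section~\ref{sec:Beauville}: the faithful composition $\wt{\Lam} \hookrightarrow \Lam \times \wt{\calN} \hookrightarrow \GL_{N}(\bbR)$ exhibits $\wt{\Lam}$ as linear, after which linearity extends to $\wt{\Gam}$ and the entire commensurability class via induced representations.

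The main obstacle is computational rather than conceptual: extracting a usable presentation of $\wt{\Lam}$ from the index-$72$ subgroup data and feeding it through Magma's $p$-quotient or nilpotent quotient routines. The scaffolding of central extensions, nilpotent quotients, Malcev's theorems, and commensurability invariance is already in place; what cannot be checked by hand is the rank-by-one jump between $\calN$ and $\wt{\calN}$, and everything ultimately hinges on that single computer-verified fact.
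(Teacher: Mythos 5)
Your proposal is correct and follows essentially the same route as the paper: reduce via Lemma~\ref{lem:CommensurableRF} to the Deligne--Mostow lattice with weights $(5,4,1,1,1)/6$, pass to the index-$72$ subgroup given by Hirzebruch's ball quotient, verify by Magma the same two-step nilpotent quotient computation showing $z$ survives with infinite order, and conclude with Corollary~\ref{cor:RFNilQuo}, with linearity handled exactly as in the closing remark of \S\ref{sec:Beauville}. No substantive differences from the paper's argument.
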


Lattices that fall under the umbrella of Theorem~\ref{thm:PicardRF} include the Deligne--Mostow lattices with weights $(2,1,1,1,1)/3$, $(5,4,1,1,1)/6$, and seven others. It also covers the fundamental group of Hirzebruch's ball quotient and in fact every minimal volume complex hyperbolic $2$-manifold that admits a smooth toroidal compactification \cite{DiCerboStoverClassify}.

\medskip

\noindent
\textbf{Example 2: (11,7,2,2,2)/12} (uniform, arithmetic)

\medskip

This example is already covered by Theorem~\ref{thm:StoverRF}, and the details from this perspective are very similar to the previous example, so we are somewhat terse. We include it here because these methods give some more precise information about the relationships between the two-step nilpotent quotients of the fundamental group of the Stover surface and the associated central extension. For this example, the curve $A$ has orbifold weight $4$ and $B$ again has weight $3$.

For the hermitian form with matrix
\[
\begin{pmatrix}
1 + \sqrt{3} & -1 & 0 \\
-1 & -1 + \sqrt{3} & 0 \\
0 & 0 & -1
\end{pmatrix}
\]
we can choose the following representatives in $\U(2,1)$ for the generators:
\begin{align*}
b_0 &= \begin{pmatrix}
1 & 0 & 0 \\
(\frac{3}{2} + \sqrt{3}) - (1 + \frac{\sqrt{3}}{2}) i & \frac{1}{2}(-1 + i) (1 + \sqrt{3}) & -\frac{i}{2} (2 - i + \sqrt{3}) \\
\frac{1}{2}(1 + i) (1 + \sqrt{3}) & -1 - i & \frac{1}{2}(2 - i + \sqrt{3})
\end{pmatrix} \\
u_0 &= \begin{pmatrix}
\frac{1}{2}(1 + i) (1 + \sqrt{-3}) & 1 - \frac{\sqrt{3}+i}{2} & 0 \\
\frac{i}{2} (2 + i + \sqrt{3}) & \frac{1}{2}(\sqrt{3}-i) & 0 \\
0 & 0 & 1
\end{pmatrix} \\
v_0 &= \begin{pmatrix}
i & 0 & 0 \\
\frac{1}{2}(-1 + i) (1 + \sqrt{3}) & 1 & 0 \\
0 & 0 & 1
\end{pmatrix}
\end{align*}
To obtain elements of $\U(2,1)$ with respect to the standard hermitian form used in \S\ref{ssec:SUcoords}, we conjugate by the matrix
\[
\begin{pmatrix}
\sqrt{\frac{1}{2}(1+\sqrt{3})} & -\frac{1}{2}\sqrt{-1+\sqrt{3}} & -\frac{1}{\sqrt{2}} \\
0 & \frac{1}{\sqrt{1+\sqrt{3}}} & 0 \\
\sqrt{\frac{1}{2}(1+\sqrt{3})} & -\frac{1}{2}\sqrt{-1+\sqrt{3}} & \frac{1}{\sqrt{2}}
\end{pmatrix}
\]
and then scale by $\det^{1/3}$ to obtain elements $\wh{b}, \wh{u}, \wh{v} \in \SU(2,1)$ with respect to our standard hermitian form. Similarly to the previous example, we obtain:

\begin{thm}\label{thm:PresentDMLift}
Let $\Gam < \PU(2,1)$ be the Deligne--Mostow lattice with weights $(11,7,2,2,2)/12$. The preimage $\wt{\Gam}$ of $\Gam$ in the universal cover $\wt{G}$ has presentation
\[
\left\langle b, u, v, z\ :\ b^3 z,\, u^3 z,\, v^4 z,\, \mathrm{br}_2(b,v),\, \mathrm{br}_3(b,u),\, \mathrm{br}_4(u,v),\, (buv)^3 z^3 \right\rangle,
\]
where $z$ is a lift to $\wt{G}$ of a generator for the center of $\SU(2,1)$ and hence $z^3$ denotes a generator for $\pi_1(\SU(2,1))$.
\end{thm}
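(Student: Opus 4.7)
The plan is to carry out the procedure from Section~\ref{sec:Lift} in complete parallel to the proof of Theorem~\ref{thm:PresentDMLiftPicard}, with the only substantive changes coming from the different orbifold weights and different explicit matrix representatives. The first step is to write down the presentation
\[
\Gam = \left\langle b, u, v\ \middle|\ b^3,\, u^3,\, v^4,\, \mathrm{br}_2(b,v),\, \mathrm{br}_3(b,u),\, \mathrm{br}_4(u,v),\, (buv)^3 \right\rangle
\]
coming from the orbifold structure on $\bbP(1,2,3)$, where the curve $A$ carries orbifold weight $4$ and $B$ carries weight $3$ (the local spherical complex reflection groups at the two tangencies and the cusp of $B$ being $3[4]4$, $4[3]3$ and $3[2]3$ respectively). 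This reads off directly from the weights $(11,7,2,2,2)/12$ via the Deligne--Mostow algorithm, exactly as recorded in the general discussion preceding Example~1.

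Next, starting from the explicit matrices $b_0, u_0, v_0 \in \U(2,1)$ given above (with respect to the indicated hermitian form), I would conjugate by the matrix displayed above to obtain matrices in $\U(2,1)$ with respect to the standard form $h$ of Section~\ref{ssec:SUcoords}, and scale by a cube root of the determinant to produce elements $\wh{b},\wh{u},\wh{v}\in \SU(2,1)$ generating the preimage $\wh{\Gam}$ of $\Gam$ in $\SU(2,1)$. A direct matrix check then verifies the expected relations $\wh{b}^3 = \wh{u}^3 = \wh{v}^4 = \wh{z}^2$, $(\wh{b}\wh{u}\wh{v})^3 = \Id$, and the three braid relations, where $\wh{z}=\zeta_3 I$ generates $Z(\SU(2,1))$. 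This gives a presentation for $\wh{\Gam}$.

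Then, for each generator I would choose $v_i \in \frakg$ with $\wh{g}_i = \exp_G(v_i)$ to produce the paths $\gam_i(s) = \exp_G(s v_i)$ and their canonical lifts $\wt{g}_i = \exp_{\wt{G}}(v_i)$ in $\wt{G}$. For each relation $\calR_i$ of $\wh{\Gam}$ I would assemble the loop $\sig_i : [0,1] \to \SU(2,1)$ as in Section~\ref{sec:Lift} and invoke Lemma~\ref{lem:Computepi1}: the integer $r_i$ to insert in the lifted relation is the winding number around $0$ of the planar loop $s \mapsto \pi_3(\sig_i(s) z_0)$, which can be plotted and counted directly by computer algebra in the spirit of Figures~\ref{fig:LiftbRelationPicard}--\ref{fig:LiftbRelationCPicard}. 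Lemma~\ref{lem:CorrectPresentation} then guarantees that the resulting presentation
\[
\langle \wt{b},\wt{u},\wt{v},z\mid \wt{\calR}_i z^{-3 r_i},\ z\ \text{central}\rangle
\]
presents $\wt{\Gam}$.

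The expected winding numbers are $r = -1/3$ (i.e. a single factor of $z$) for each of $b^3$, $u^3$, $v^4$, the value $r = -1$ for $(buv)^3$ (producing $z^3$), and $r=0$ for each of the three braid relations (so they lift verbatim), matching the statement. The main obstacle is not conceptual but computational: one must carry out the numerical evaluation of $\pi_3(\sig_i(s) z_0)$ for each of the seven relations using the supplemental Mathematica files, and verify that no braid relation secretly wraps around $0$. The rest is a bookkeeping application of Lemma~\ref{lem:CorrectPresentation}.
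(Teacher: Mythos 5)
Your proposal is correct and is essentially the paper's own proof: Theorem~\ref{thm:PresentDMLift} is established exactly as you outline, by taking the orbifold presentation with $b,u$ of order $3$ and $v$ of order $4$, conjugating the displayed matrices to the standard hermitian form, scaling into $\SU(2,1)$, and then running the lifting procedure of \S\ref{sec:Lift} (Lemma~\ref{lem:Computepi1} together with Lemma~\ref{lem:CorrectPresentation}) by computer, in complete parallel with Theorem~\ref{thm:PresentDMLiftPicard}. Two small inaccuracies that do not affect the outcome: the winding numbers of Lemma~\ref{lem:Computepi1} are integers attached to honest relation loops in $\SU(2,1)$ --- words involving the central generator $\wh{z}$, e.g.\ $\wh{b}^3\wh{z}=\Id$, or the loop $\wh{b}^9=\Id$ tracked in Figures~\ref{fig:LiftbRelationPicard}--\ref{fig:LiftbRelationCPicard} --- so your ``$r=-1/3$'' should be read as the assertion that the canonical lift of $\wh{b}^3$ ends at $z^{-1}$; and the local spherical reflection groups for these weights are $3[2]4=\langle b,v\rangle$, $3[3]3=\langle b,u\rangle$, and $3[4]4=\langle u,v\rangle$ (attached to the transverse crossing, the cusp of $B$, and the tangency, respectively), not the ones you list.
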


\begin{rem}
One can reinterpret Theorems~\ref{thm:PresentDMLiftPicard} and \ref{thm:PresentDMLift} as follows. With $q$ equal to $6$ or $4$, respectively, $\wt{\Gam}$ is generated by the natural preimages in $\wt{G}$ of the finite complex reflection subgroups
\begin{align*}
3[2]q &= \langle \conj{b}, \conj{v} \rangle &
3[3]3 &= \langle \conj{b}, \conj{u} \rangle &
3[4]q &= \langle \conj{u}, \conj{v} \rangle
\end{align*}
of $\PU(2,1)$, subject to the relation that $(buv)^3$ is a generator for $\pi_1(\SU(2,1))$. Here $\conj{b}, \conj{u}, \conj{v}$ denote generators in $\PU(2,1)$ and $b,u,v$ are lifts to the universal cover.
\end{rem}

Using Theorem~\ref{thm:PresentDMLift}, we now give a second proof of Theorem~\ref{thm:StoverRF}.

\begin{pf}[Second proof of Theorem~\ref{thm:StoverRF}]
Consider the fundamental group of the Stover surface, which is the congruence subgroup $\Lam \le \Gam$ of prime level dividing $3$ (see \cite{StoverHurwitz, DzambicRoulleau}). This is a normal subgroup of index $18144$ for which the preimage of $\Lam$ in $\SU(2,1)$ splits, i.e., is isomorphic to $\Lam \times \bbZ/3$.

Considering $\Lam$ as a torsion-free lattice in $\SU(2,1)$, we have a preimage $\wt{\Lam} < \wt{G}$ of $\Lam$ of the form
\[
1 \lra \langle z^3 \rangle \lra \wt{\Lam} \lra \Lam \lra 1
\]
with $\wt{\Lam}$ a subgroup of index $54432$ in $\wt{\Gam}$ and, as above, $z^3$ is a generator for $\pi_1(\SU(2,1))$. Using Magma, one can compute the maximal two-step nilpotent quotients $\calN$ and $\wt{\calN}$ of $\Lam$ and $\wt{\Lam}$, respectively. These fit into exact sequences:
\begin{align*}
1 \lra \bbZ / 4 \times \bbZ^{28} \lra &\calN \lra \bbZ^{14} \lra 1 \\
1 \lra \bbZ^{29} \lra &\wt{\calN} \lra \bbZ^{14} \lra 1
\end{align*}
Then $\langle z^3 \rangle \cong \bbZ$ maps injectively to $\wt{\calN}$, and is generated by the $4^{th}$ power of a certain primitive element of $\bbZ^{29}$. As in previous examples, Corollary~\ref{cor:RFNilQuo} implies that $\wt{\Lam}$ is residually finite and Lemma~\ref{lem:CommensurableRF} completes the proof of the theorem.
\end{pf}

\bibliography{CentralRF}

\end{document}